\newcommand{\pr}{\mathbb{P}}								
\newcommand{\Prob}[1]{\pr\left(#1\right)}					
\newcommand{\CProb}[2]{\pr\left(\left.#1\right|#2\right)}	
\newcommand{\e}{\mathbb{E}}								
\newcommand{\Exp}[1]{\e\left[#1\right]}					
\newcommand{\CExp}[2]{\e\left[\left.#1\right|#2\right]}	
\newcommand{\bigO}[1]{O\left(#1\right)}				
\newcommand{\smallO}[1]{o\left(#1\right)}			
\newcommand{\bigT}[1]{\Theta\left(#1\right)}		
\newcommand{\ind}[1]{\mathbbm{1}_{\left\{#1\right\}}}	
\newcommand{\R}{\ensuremath{\mathbb{R}}}			
\newcommand{\Mcal}{\ensuremath{\mathcal{M}}}		
\newcommand{\Ccal}{\ensuremath{\mathcal{C}}}		
\newcommand{\Ncal}{\ensuremath{\mathcal{N}}}		
\newcommand{\Xcal}{\ensuremath{\mathcal{X}}}		
\newcommand{\Pcal}{\ensuremath{\mathcal{P}}}
\newcommand{\Bcal}{\ensuremath{\mathcal{B}}}
\newcommand{\Gbb}{\ensuremath{\mathbb{G}}}			
\newcommand{\BallM}[2]{\mathcal{B}_\Mcal\left(#1 \, ; \, #2\right)}
\newcommand{\BallN}[2]{\mathcal{B}_N\left(#1 \, ; \, #2\right)}
\newcommand{\Ball}[2]{\mathcal{B}\left(#1 \, ; \, #2\right)}
\newcommand{\BallG}[2]{\mathcal{B}_G\left(#1 \, ; \, #2\right)}
\newcommand{\BallGw}[2]{\mathcal{B}_G^w\left(#1 \, ; \, #2\right)}
\newcommand{\BallGast}[2]{\mathcal{B}_G^\ast\left(#1 \, ; \, #2\right)}
\newcommand{\Ric}{\mathrm{Ric}}						
\newcommand{\vol}{\mathrm{vol}}						
\newcommand{\Po}{\mathrm{Po}}						
\renewcommand{\vec}[1]{\ensuremath{{\bf #1}}}		
\newcommand\numberthis{\addtocounter{equation}{1}\tag{\theequation}}
\newcommand{\dd}{\ensuremath{\mathrm{d}}}
\newtheorem{theorem}{Theorem}[section]
\newtheorem{definition}{Definition}[section]
\newtheorem{lemma}[theorem]{Lemma}
\newtheorem{proposition}[theorem]{Proposition}
\newtheorem{corollary}[theorem]{Corollary}
\newtheorem{remark}[theorem]{Remark}
\title{Ollivier curvature of random geometric graphs \\ converges to Ricci curvature of their Riemannian manifolds}
\author[1,2,3]{Pim van der Hoorn}
\author[4]{Gabor Lippner}
\author[6]{Carlo Trugenberger}
\author[2,3,4,5]{Dmitri Krioukov}
\affil[1]{Department of Mathematics and Computer Science, Eindhoven~University~of~Technology}
\affil[2]{Department of Physics, Northeastern University}
\affil[3]{Network Science Institute, Northeastern University}
\affil[4]{Department of Mathematics, Northeastern University}
\affil[5]{Department of Electrical and Computer Engineering, Northeastern University}
\affil[6]{SwissScientific Technologies}
\begin{document}

\maketitle

\begin{abstract}
Curvature is a fundamental geometric characteristic of smooth spaces. In recent years different notions of curvature have been developed for combinatorial discrete objects such as graphs. However, the connections between such discrete notions of curvature and their smooth counterparts remain lurking and moot. In particular, it is not rigorously known if any notion of graph curvature converges to any traditional notion of curvature of smooth space. Here we prove that in proper settings the Ollivier-Ricci curvature of random geometric graphs in Riemannian manifolds converges to the Ricci curvature of the manifold. This is the first rigorous result linking curvature of random graphs to curvature of smooth spaces. Our results hold for different notions of graph distances, including the rescaled shortest path distance, and for different graph densities. With the scaling of the average degree, as a function of the graph size, ranging from nearly logarithmic to nearly linear. \par \bigskip

\noindent \textbf{Keywords:} Graph curvature, Ollivier-Ricci curvature, Random geometric graphs, Riemannian manifolds \par \bigskip

\noindent \textbf{Mathematics subject classification:} 60D05 (primary), 05C80 (secondary) 
\end{abstract}

\section{Introduction}

Curvature is a fundamental concept in the study of geometric spaces. It is a local parameter whose behavior often controls global phenomena on the manifold. In particular, bounds on the Ricci curvature are known to imply an array of properties, including diameter bounds, control of the spectrum, and sub-Gaussian decay of the heat kernel. If the curvature of some space is upper-bounded by a negative value, then such space has a boundary at infinity and some other universal characteristics of (coarsely) hyperbolic spaces.
Unfortunately, most notions of curvature are applicable only to smooth continuous spaces, such as Riemannian and pseudo-Riemannian manifolds. While there exist some \emph{combinatorial} notions of curvature~\cite{cheeger1984curvature,forman2003bochner}, none has the same power as their smooth counterparts. We refer to~\cite{najman2017modern} for a general overview of discrete curvatures. The focus of this paper is graph curvature.

In~\cite{ollivier2007ricci,ollivier2009ricci,ollivier2010survey}, Yann Ollivier introduced a definition of curvature for general metric spaces as a discretization of the well-known Ricci curvature. Since this definition is applicable to any metric space, it is applicable to graphs in particular. Even though relatively recent, it has already proven to be quite influential and fruitful. In analysis of networks, Ollivier-Ricci curvature has been used, for example, to identify communities~\cite{sia2019ollivier}, analyze cancer cells~\cite{sandhu2015graph}, asses the fragility of financial networks~\cite{sandhu2016ricci} and robustness of brain networks~\cite{farooq2019network}, and to embed networks for machine learning applications~\cite{gu2018learning}. Ollivier-Ricci curvature has also been analyzed for several types of (random) graphs including Erd\H{o}s-R\'{e}nyi random graphs~\cite{lin2011ricci}. Some general bounds for this curvature have also been established based on different graph properties~\cite{lin2011ricci,jost2014ollivier,bhattacharya2015exact}. 
These and other applications of Ollivier-Ricci curvature have also stimulated general interest in graph curvature, leading to the introduction and studies of many other notions of graph curvature~\cite{sreejith2016forman,liu2018bakry,kempton2019large,cushing2019long}. 

An interesting aspect of Ollivier-Ricci curvature (or any other notion of discrete curvature) is that it creates a bridge between geometry and discrete structures. For example, discrete curvatures play an important role in the field of manifold learning where the discrete objects are data points lying on some manifold, and the task is to learn from the data the properties of the manifold~\cite{ache2019ricci}. 

A related task is that of graph embedding: given a graph, find its embedding in a smooth space such that graph distances between nodes are approximated by distances in the space. Curvature has proven to be important for finding the right space to embed the graph into~\cite{gu2018learning}.

In addition to these classical applications, geometry has also proven to be an important and powerful concept for designing latent-space models of random graphs whose properties---such as degree distributions, clustering, distance distributions---closely resemble those of real-world networks~\cite{krioukov2010hyperbolic,jacob2015spatial,krioukov2016clustering,bringmann2019geometric}. These relations between geometry and network properties inevitably lead to the question whether characteristics of latent geometries of networks can be inferred from discrete properties of graphs that represent these networks. Since curvature is a fundamental characteristic of geometry, it is a natural first candidate for uncovering latent geometry in networks. Hence, a proper notion of graph curvature is needed, a notion that would be known to converge to the true curvature of the geometric space underlying the graph, if it exists. 

Quantum gravity is yet another area where convergence of graph curvature is of interest. Here one wants to find a discrete geometry that converges in the continuum limit to the geometry of physical spacetime. To this end, Ollivier-Ricci curvature and its variations have been extensively investigated recently~\cite{klitgaard2018introducing,trugenberger2017combinatorial,cunningham2020dimensionally}.

Despite the interest in Ollivier-Ricci and related curvatures of discrete and combinatorial spaces, the fundamental question of convergence remains largely open. That is, does there exist a discrete notion of curvature that converges in some limit to a traditional notion of curvature of smooth spaces. In general, such convergence may be too much to wish for since, for instance, it is known that there cannot exist any discrete version of Gaussian curvature that would converge on any triangulation of any smooth space~\cite{xu2005convergence}.  

There are, however, some positive results in this direction. One is for the convergence of an angle-defect-based notion of curvature of smooth triangulations of Riemannian manifolds~\cite{cheeger1984curvature}. Another one is a manifold learning method designed for consistent estimation of Ricci curvature of a submanifold in Euclidean space based on a point cloud sprinkled uniformly onto the submanifold~\cite{ache2019ricci}. Perhaps the closest result to ours is the one in~\cite{benincasa2010scalar,belenchia2016continuum} where a discrete version of the d'Alembertian operator is defined for causal sets in 2- and 4-dimensional Lorentzian manifolds. This discrete d'Alembertian is then shown to converge to the traditional d'Alembertian in the continuum limit. To the best our knowledge, there currently exist no general convergence results for truly combinatorial objects in general and random graphs in Riemannian manifolds in particular.

In this paper we study the question of convergence of Ollivier-Ricci curvature of graphs. We consider random geometric graphs whose nodes are a Poisson process in a Riemannian manifold and whose edges are formed only between nodes that lie within a given distance threshold from each other in the manifold. We show that as the size of such graphs tends to infinity, their Ollivier-Ricci curvature recovers the Ricci curvature of the underlying manifold. To the best of our knowledge, this is the first result that relates a discrete notion of curvature of graphs to the continuum version of curvature of their underlying geometry.

The remainder of the paper is structured as follows. In the next Section~\ref{sec:notations_definitions} we introduce the basic notations and definitions needed to present our main results.  We present these results in Section~\ref{sec:main_results}. That section ends with some general comments and outlook. We then provide a general overview of the proof strategy in the first half of Section~\ref{sec:proof_overview}. The second half of that section contains the proofs of the main results. The final Section~\ref{sec:proofs} contains all the remaining details and proofs of intermediate results that are skipped in Section~\ref{sec:proof_overview}.

\section{Notations and definitions}\label{sec:notations_definitions}

\subsection{Geometric graphs} 

Given a metric space $(\mathcal{X}, d)$, a countable \emph{node set} $X \subseteq \mathcal{X}$, and \emph{connection radius} $\varepsilon > 0$, we define $G(X,\varepsilon)$ as the graph whose nodes are all the elements in $X$. An edge between $x, y \in X$ exists if and only if $d(x,y) \le \varepsilon$. Since the nodes of $G$ are points in the metric space, we will refer to them using $x$ and $y$, instead of indices $i$ and $j$, and write $x \in G$ if $x$ is a node of $G$.

We will also use $G_{xy}$ to denote the indicator of an edge between $x$ and $y$ in $G$ and define $\Ncal_x$ to be the neighborhood of node $x$, i.e.
\[
	\Ncal_x = \left\{y \in G \, | \, G_{xy} = 1\right\}.
\]
Note that $\Ncal_x = X \cap \Ball{x}{\varepsilon}$, where $\Ball{x}{\varepsilon}$ denotes the closed ball around $x \in \mathcal{X}$ of radius $\varepsilon$ with respect to the distance $d$, but excluding $x$.

\subsection{Random geometric graphs}

In this paper we consider graphs that are constructed by randomly placing points in the metric space $\Xcal$, according to a Poisson process. In order to analyze a notion of curvature on these graphs we need to impose some additional structure on $\Xcal$. More precisely, we will consider Riemannian manifolds as the spaces on which graphs are constructed. We briefly recall some notions of Riemannian geometry needed for the setup and refer the reader to~\cite{jost2009geometry,oneill1983semiriemannian} for more details on the topic. 

Formally, a Riemannian manifold is a pair $(\Mcal, g)$ where $\Mcal$ is a smooth manifold and for each $x \in \Mcal$, $g_x$ is a smooth inner product on the tangent space $T_x\Mcal$ at $x$. This inner product induces a metric $d_\Mcal$, called the Riemannian metric. Since we are mainly interested in metric spaces, we denote a Riemannian manifold by the pair $(\Mcal, d_\Mcal)$. 

Throughout the remainder of this paper we work with Riemannian manifolds that are \emph{orientable}, \emph{connected} and \emph{complete}. The first property ensures that there exists a globally defined \emph{volume form} $\vol_\Mcal$ on $\Mcal$ so that we can perform integration on $\Mcal$. For any $U \subseteq \Mcal$ we will write $\vol_\Mcal(U) = \int_U \dd \vol_\Mcal$ to denote the volume of $U$. The second property says that, as a topological space, $\Mcal$ cannot be separated into the union of disjoint open sets. Finally, completeness means that for any two points $x, y \in \Mcal$ there exists a shortest path (geodesic) in $\Mcal$ connecting $x$ and $y$, whose length is $d_\Mcal(x,y)$. We also note that if $\Mcal$ is connected and compact, then it is complete. With this setup we can define a random geometric graph on a Riemannian manifold in an analogous way to classic random geometric graph in Euclidean space.

\begin{definition}\label{def:random_riemannian_graph}
Let $(\mathcal{M}, d_\Mcal)$ be a smooth, orientable, connected and compact $N$-dimensional Riemannian manifold. Fix $\varepsilon > 0$ and consider a Poisson process $\Pcal_n$ on $\Mcal$ with intensity measure $\frac{n}{\vol_\Mcal(\Mcal)} \, \dd \vol_\Mcal$. Then we define the \emph{random geometric graph} $\Gbb_{n}(\varepsilon) := G(\Pcal_n, \varepsilon)$.
\end{definition}

\begin{remark}[Conditions on the manifold]
From a technical perspective, we only need the manifold to be smooth. This is because we will be working on shrinking neighborhoods of some fixed point $x^\ast \in \Mcal$. For a sufficiently small neighborhood $U$, we can always construct a volume form that is well defined on $U$ and ensure that every two points in $U$ are connected by a geodesic path. We could then fix a sufficiently small and compact neighborhood $\mathcal{C}$ of $x^\ast$ and then consider a Poisson process on $\mathcal{C}$ with intensity measure $n/\vol_\Mcal(\mathcal{C}) \, \dd \vol_\Mcal$. 

The only difference with the global setup is that we would need to frame everything in terms of sufficiently small neighborhoods and deal with possible boundary issues in our proofs. In the end, since curvature is a local property, these issues would vanish. Still, framing all results in this \emph{local} setting would add additional technical layers to the proofs. For convenience, we therefore choose to present everything in terms of global and nice requirements on the manifold.
\end{remark}

We shall next introduce a notion of curvature on random geometric graphs. Since curvature is inherently a local property, it makes sense to define curvature on graphs as a property of an edge. For our analysis we will take a more general approach and consider curvature between two fixed nodes in the graph that are connected by a path. We then analyze its behavior as the size of the graph tends to infinity. 

For any $x \in \Mcal$, we write $\Gbb_n(x,\varepsilon) := G(X_n, \varepsilon)$, where $X_n = \{x\} \cup \Pcal_n$. That is, $\Gbb_n(x,\varepsilon)$ is a random geometric graph with $x$ added to the node set. Similarly, for any pair of points $(x,y) \in \Mcal$ we write $\Gbb_n(x, y,\varepsilon) := G(X_n^\prime, \varepsilon)$, with $X_n^\prime = \{x, y\} \cup \Pcal_n$.  We refer to both $\Gbb_n(x,\varepsilon)$ and $\Gbb_n(x, y,\varepsilon)$ as \emph{rooted random graphs}.

\subsection{Ollivier-Ricci curvature on graphs}

The definition of Ollivier-Ricci curvature uses the Wasserstein metric (transportation distance), which we shall introduce next. Recall that a coupling between two probability measures $\mu_1$ and $\mu_2$ is a joint probability measure $\mu$
whose marginals are $\mu_1$ and $\mu_2$.

\begin{definition}
Let $\mu_1$ and $\mu_2$ be probability measures on a metric space $(\Xcal, d)$ and let $\Gamma(\mu_1, \mu_2)$ denote
the set of all couplings $\mu$ between $\mu_1$ and $\mu_2$. Then the Wasserstein metric (Kantorovich-Rubenstein distance of order one) is given by
\begin{equation}\label{eq:def_wasserstein_inf}
	W_1(\mu_1, \mu_2) = \inf_{\mu \in \Gamma(\mu_1, \mu_2)} \int_{\Xcal \times \Xcal} d(x,y) \, d\mu(x,y)
\end{equation}
\end{definition}

Let $G$ be a graph. The definition of Ollivier-Ricci curvature on graphs relies on two ingredients, a metric on $G$ and a family of probability measures, indexed by the vertices. 

\begin{definition}\label{def:Ollivier_triple}
An Ollivier-triple $\mathcal{G}$ is a triple $(G, d_G, {\bf m})$, where $G$ is a graph, $d_G$ a metric on $G$ and ${\bf m} = \{m_x\}_{x \in G}$ a family of probability measures on $G$ for each node $x \in G$.
\end{definition}

Given an Ollivier-triple $\mathcal{G} = (G, d_G, {\bf m})$, we write $W_1^{\mathcal{G}}$ for the Wasserstein metric with respect to the metric space $(G, d_G)$. We then define for any pair of nodes $x, y \in G$ the associated Ollivier curvature as
\begin{equation}\label{eq:def_olivier_curvature}
	\kappa(x,y;\, \mathcal{G}) = \begin{cases}
		1 - \frac{W_1^{\mathcal{G}}(m_x, m_y)}{d_G(x,y)} &\mbox{if } d_G(x,y) < \infty \\
		0 &\mbox{otherwise.}
	\end{cases}
\end{equation}

\begin{remark}
\hfil\\
\begin{enumerate}
\item The concept of Ollivier-Ricci curvature is not restricted to graphs and can be defined on any metric space where we have a sequence of probability measures. A specific example of these are Riemannian manifolds $(\Mcal, d_\Mcal)$.
\item Note that a sequence $\{m_x\}_{x\in G}$ of probability measures on $G$ gives rise to a random walk on the graph. The transition probabilities are given by $\CProb{x_{t+1} \in A}{x_t = x} = m_x(A)$. So an Ollivier-triple consists of a graph, a metric and a random walk on the graph. However, since we will only use concepts related to the measures $m_x$ we refrain from using any random walk terminology.
\item When $d_G$ is the shortest path metric on $G$ and ${\bf m}$ corresponds to the uniform probability measures on the neighborhoods $\Ncal_x$, i.e. $m_x(y) = G_{xy}/|\Ncal_x|$, we are in the classic setting for Ollivier-Ricci curvature on graphs~\cite{jost2014ollivier,ni2015ricci,paeng2012volume}. In this work, however, we shall use different combinations of metrics on graphs and probability measures to obtain our results. This is why we define Ollivier-Ricci curvature on graphs in a more general way.
\item The reason why we set $\kappa(x,y;\, \mathcal{G}) = 0$ if the nodes are not in the same connected component is because we work with random graphs and this way we ensure that $\kappa(x,y;\, \mathcal{G})$ is a real-valued random variable.
\end{enumerate}
\end{remark}

\subsection{Curvature in Riemannian manifolds}

Our main results relate the standard Ricci curvature of a manifold to the Ollivier-Ricci curvature of the random geometric graph constructed on this manifold.  For this, we briefly recall the definition of the Ricci curvature, see~\cite{jost2009geometry,oneill1983semiriemannian}.

In general, the curvature of a geometric space is intended as a local measure for how ``different" a region of the space is from that of the flat Euclidean space. Notions of curvature in Riemannian geometry are governed by the \emph{Riemannian curvature tensor} $R$. Given an $N$-dimensional Riemannian Manifold $(\mathcal{M}, d_\Mcal)$, a point $x \in \mathcal{M}$ and two vectors $\vec{v}, \vec{w} \in T_x \mathcal{M}$ (the tangent space of $x$), the Riemannian curvature tensor with respect to $\vec{v}$, $\vec{w}$ is a linear map $R(\vec{v},\vec{w}) : T_x \Mcal \to T_x \Mcal$, written as $\vec{u} \mapsto R(\vec{v}, \vec{w}) \vec{u}$ and defined in terms of the Levi-Civita connection on the tangent bundle. It quantifies to what extent the manifold $\Mcal$ is not diffeomorphic with flat Euclidean space. 

In this paper we will use the notion of curvature called \emph{Ricci curvature}. For two vectors $\vec{v}$ and $\vec{w}$, the Ricci curvature $\mathrm{Ric}(\vec{v}, \vec{w})$ is defined, in terms of the Riemannian tensor, as the trace of the linear map
\[
	\vec{u} \mapsto R(\vec{u},\vec{v})\vec{w}, \quad \vec{u} \in T_x \mathcal{M}.
\]
Given a point $x \in \mathcal{M}$ and a unit vector $\vec{v} \in T_x \mathcal{M}$, we often refer to $\mathrm{Ric}(\vec{v},\vec{v})$ as the \emph{Ricci curvature} of $x$ with respect to $v$. 

This Ricci curvature is related to another notion of curvature, called \emph{sectional curvature}, which is defined as
\[
	K(\vec{v},\vec{w}) = \frac{\langle R(\vec{v},\vec{w})\vec{v}, \vec{w} \rangle}
	{\langle \vec{v}, \vec{v} \rangle \langle \vec{w}, \vec{w} \rangle - \langle \vec{v}, \vec{w} \rangle^2},
\]
where $\langle \cdot, \cdot\rangle$ denotes the inner product on the tangent space. One can show that $\Ric(\vec{v},\vec{v})$ is obtained by averaging the sectional curvature $K(\vec{v},\vec{w})$ over all unit vectors $\vec{w} \in T_x\Mcal$.

In the remainder of this paper we will work with the Ricci curvature of a point $x$, with respect to some tangent vector $\vec{v}$. We note that it is not needed to understand the fine details behind curvature of Riemannian manifolds to understand all the details of the results or proofs.

\section{Main results}\label{sec:main_results}

Here we state our results regarding the convergence of Ollivier-Ricci curvature of random geometric graphs on Riemannian manifolds. We note that if the manifold dimension is $N = 1$, then there is nothing to prove, so that we always assume that $N \ge 2$.

We mainly consider two different distances on the graphs, leading to two different but related results. Although we consider different distances on graphs, we shall always consider uniform measures on balls of a certain radius. We shall clearly distinguish between the connection radius of the graph and the radius used for the uniform measures:

\begin{center}
\begin{tabular}{ll}
  Connection radius:    & $\varepsilon_n$ \\
  Measure radius:       & $\delta_n$. 
\end{tabular}
\end{center}

The former is the connectivity distance threshold: if the distance between a pair of nodes in the manifold is below this threshold, then these nodes are connected by an edge in the graph. The latter radius is the radius of the ball (either in the graph or in the manifold) over which the uniform probability measure is distributed. 

Let $G_n = \Gbb_n(\varepsilon_n)$ be a random geometric graph on $\Mcal$ and $d_G$ a distance on $G_n$. Then, for a node $x \in G_n$, we define the graph ball of radius $\lambda$ around $x$ as
\[
	\BallG{x}{\lambda} := \left\{y \in G_n \setminus\{x\} \, : \, d_G(x,y) \le \lambda\right\}.
\]
Note that $\BallG{x}{\lambda}$ depends on the definition of the graph distance $d_G$. For our results we consider Ollivier-triples $\mathcal{G}_n = (G_n, d_G, {\bf m}^G)$, were ${\bf m}^G$ are the uniform measures on $\BallG{x}{\delta_n}$, i.e.
\begin{equation}
	m_x^G(y) = \begin{cases}
		\frac{1}{|\BallG{x}{\delta_n}|} &\mbox{if } y \in \BallG{x}{\delta_n}\\
		0 &\mbox{else.}	
	\end{cases}
\end{equation}

We reiterate that if $\varepsilon_n = \delta_n$ and the graph metric $d_G$ is the shortest path distance, then we are in the classical setting of Ollivier-Ricci curvature on graphs as considered in the past literature~\cite{jost2014ollivier,ni2015ricci,paeng2012volume}. 

\subsection{Graphs with manifold weighted distance}

Let $G_n = \Gbb_n(x^\ast, \varepsilon_n)$ be a random rooted graph on $\Mcal$. Then we define the manifold weighted graph distance $d_G^w$ as the weighted shortest-path distance on $G_n$ where each edge $(u,v)$ is assigned weight $d_\Mcal(u,v)$, corresponding to the distance between the nodes on the manifold. Similar to $\BallG{x}{\lambda}$, we denote by $\BallGw{x}{\lambda}$ the graph ball of radius $\lambda$ with respect to $d_{G}^w$ and let ${\bf m}^{G, w} = (m_x^{G,w})_{x \in G}$ denote the uniformly measures on the balls $\BallGw{x}{\delta_n}$. 

Finally, given a point $x \in \Mcal$ and a vector $\vec{v} \in T_x \Mcal$, we say that another point $y \in \Mcal$ is \emph{at distance $\delta$ in the direction of $\vec{v}$}, if $d_\Mcal(x,y) = \delta$ and $y$ lies on the geodesic starting at $x$ in the direction of $\vec{v}$.

Our first result shows that for certain combinations of connection radius $\varepsilon_n$ and measure radius $\delta_n$, the Ollivier-Ricci curvature on $G_n$ converges to the Ricci curvature. 



\begin{theorem}\label{thm:convergence_ollivier_almost_sparse_graphs_weighted}
Let $N \ge 2$, $(\Mcal, d_\Mcal)$ be a smooth, orientable, connected and compact $N$-dimensional Riemannian manifold, $x^\ast \in \mathcal{M}$ and $\vec{v}$ a unit tangent vector at $x^\ast$. Furthermore, let $\varepsilon_n = \bigT{\log(n)^a n^{-\alpha}}$, $\delta_n = \bigT{\log(n)^b n^{-\beta}}$ (as $n \to \infty$) where the constants satisfy
\begin{align*}
	0 < \beta \le \alpha, \quad
	\alpha + 2\beta \le \frac{1}{N},
\end{align*}
and $a \le b$ if $\alpha = \beta$ and $\min\{a, a + 2b\} > \frac{2}{N}$ if $\alpha + 2\beta = \frac{1}{N}$. 

Let $y_n^\ast \in \Mcal$ be at distance $\delta_n$ in the direction of $\vec{v}$ and $G_n = \Gbb_n(x^\ast, y_n^\ast, \varepsilon_n)$ be rooted random graphs on $\Mcal$. Then for the Ollivier-triple $\mathcal{G}_n^w = (G_n,d_{G_n}^w, {\bm m}^{G,w})$, it holds
\[
	\lim_{n \to \infty}  \Exp{\left|\frac{2(N + 2)\kappa(x^\ast, y_n^\ast;\, \mathcal{G}_n^w)}{\delta_n^2} -\Ric(\vec{v}, \vec{v})\right|} = 0.
\]
\end{theorem}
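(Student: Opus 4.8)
### Proof proposal

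\textbf{Overall strategy.} The plan is to compute the Wasserstein distance $W_1^{\mathcal{G}_n^w}(m_{x^\ast}^{G,w}, m_{y_n^\ast}^{G,w})$ by comparing it with the corresponding Wasserstein distance between uniform \emph{continuum} measures on manifold balls, and then invoke the known second-order Riemannian asymptotics of that continuum quantity. The key heuristic is that the classical expansion of the optimal transport cost between two uniform balls of radius $\delta$ centered at points $x^\ast$ and $y_n^\ast$ separated by distance $\delta$ along $\vec v$ has the form $\delta\bigl(1 - \tfrac{\delta^2}{2(N+2)}\Ric(\vec v,\vec v) + o(\delta^2)\bigr)$; dividing by $d_G(x^\ast,y_n^\ast)=\delta_n$ and rearranging gives exactly the normalization $2(N+2)\kappa/\delta_n^2 \to \Ric(\vec v,\vec v)$. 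So the real content is showing that the discrete graph objects (graph balls $\BallGw{\cdot}{\delta_n}$ with their counting measure, and the weighted graph distance $d_G^w$) are close enough to their continuum analogues, in expectation, that the $o(\delta_n^2)$ error is preserved under the $1/\delta_n^2$ rescaling.

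\textbf{Step 1: Reduce to continuum transport.} First I would fix a normal coordinate chart around $x^\ast$ of radius $\gg \varepsilon_n$ (legitimate since $\varepsilon_n \to 0$) and work entirely in that chart, using the Riemannian volume expansion $\dd\vol_\Mcal = (1 - \tfrac{1}{6}\Ric_{ij}u^iu^j + \cdots)\,\dd u$. Then I would show three approximation facts, each holding with the stated error budget (in expectation, using Poisson concentration): (a) the weighted graph distance $d_G^w(x^\ast,y_n^\ast)$ equals $d_\Mcal(x^\ast,y_n^\ast) = \delta_n$ up to error $o(\delta_n^3)$ — this needs $\varepsilon_n$ not too small (the near-logarithmic lower bound) so that the Poisson point cloud almost surely contains nodes tracing the geodesic with the required fidelity; (b) for any node $z$ near $x^\ast$, the graph ball $\BallGw{z}{\delta_n}$ coincides, up to a set of negligible relative volume, with the point set $\Pcal_n \cap \Ball{z}{\delta_n}$, again using (a)-type geodesic-shadowing arguments locally; (c) the normalized counting measure on $\Pcal_n \cap \Ball{z}{\delta_n}$ is close in $W_1$ to the continuum uniform measure on $\Ball{z}{\delta_n}$, with error $o(\delta_n^3)$ after multiplying by the $\delta_n$ scale — this is a quantitative empirical-measure / matching estimate on a ball of radius $\delta_n$ containing $\bigT{n\delta_n^N}$ points, and the conditions $\alpha + 2\beta \le 1/N$ with the boundary refinement $\min\{a,a+2b\} > 2/N$ are precisely what make $n\delta_n^N$ large enough (up to logs) for the matching error to beat $\delta_n^2$.

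\textbf{Step 2: Continuum asymptotics and assembly.} Having reduced to the continuum, I would cite or re-derive the expansion of $W_1$ between two uniform balls. One clean route is Ollivier's own computation for Riemannian manifolds (stated in the survey), adapted to $W_1$ and to balls rather than spheres: transport mass along near-geodesics, estimate the cost via the second variation of arc length, and average the sectional curvature over directions to produce the $\Ric(\vec v,\vec v)/(N+2)$ coefficient. Then combine: $W_1^{\mathcal{G}_n^w} = W_1^{\mathrm{cont}} + (\text{Step 1 errors})$, $d_G^w = \delta_n + o(\delta_n^3)$, so
\[
  \kappa(x^\ast,y_n^\ast;\mathcal{G}_n^w) = \frac{\delta_n^2}{2(N+2)}\Ric(\vec v,\vec v) + o(\delta_n^2)
\]
in expectation, which is the claim after multiplying by $2(N+2)/\delta_n^2$. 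Throughout, the bad events (point cloud too sparse near the geodesic, ball counts far from mean, heavy-tailed matching cost) must be shown to contribute $o(\delta_n^2)$ to the expectation even though $\kappa$ is only bounded by $O(1)$ trivially — so one needs the probability of these events to be $o(\delta_n^2)$, which is where the polylogarithmic prefactors and the strict inequalities $a \le b$, $\min\{a,a+2b\} > 2/N$ get used.

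\textbf{Main obstacle.} I expect the hardest part to be step (c) combined with the uniformity needed in (b): controlling the $W_1$ distance between the empirical measure of a Poisson cloud on a small ball and the continuum uniform measure, \emph{sharply enough} that the error is $o(\delta_n^2)$ relative to the $\delta_n$ transport scale, and doing so uniformly over all the relevant centers $z$ (which themselves are random nodes near $x^\ast$ and $y_n^\ast$). Naive empirical-measure bounds in dimension $N$ give errors of order $(n\delta_n^N)^{-1/N}\delta_n$, which is exactly of borderline size against $\delta_n^{3}$ when $\alpha+2\beta = 1/N$ — hence the delicate boundary hypothesis $\min\{a,a+2b\} > 2/N$, and hence one cannot afford to be wasteful: the transport coupling between the two empirical balls must be built to exploit cancellation (e.g. a common ``geodesic shift plus local rearrangement'' coupling) rather than bounded term by term. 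Managing the geodesic-shadowing argument of (a)/(b) so that $d_G^w$ genuinely reproduces $d_\Mcal$ to third order — rather than merely to first order — in the near-sparse regime is the other place where care is required.
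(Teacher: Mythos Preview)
Your decomposition into (a) graph-distance $\approx$ manifold-distance, (b) graph ball $\approx$ manifold ball on the point cloud, (c) empirical measure $\approx$ uniform measure, followed by Ollivier's continuum expansion, is exactly the paper's route. The paper abstracts (a) into the notion of a ``$\delta_n$-good approximation'' (an error bound $|d_G - d_\Mcal| \le d_\Mcal\,\xi_n^2 + \xi_n^3$ with $\xi_n = o(\delta_n)$, holding with probability $1 - o(\delta_n^3)$), proves a general convergence theorem for any such distance, and then verifies (a) for $d_G^w$ by the same geodesic-shadowing idea you sketch.

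One correction to your bookkeeping: you assign the hypothesis $\alpha + 2\beta \le 1/N$ (with its boundary refinement) to the matching step (c), but in fact it is the binding constraint for step (a). The covering/shadowing argument yields
\[
|d_G^w(u,v) - d_\Mcal(u,v)| \;\le\; C\, d_\Mcal(u,v)\,\frac{\lambda_n}{\varepsilon_n} + C\lambda_n,
\]
where $\lambda_n = \log(n)^{2/N} n^{-1/N}$ is the radius at which Poisson balls are nonempty with overwhelming probability; asking $\lambda_n/\varepsilon_n = o(\delta_n^2)$ is \emph{exactly} $\alpha + 2\beta \le 1/N$, with $a + 2b > 2/N$ on the boundary. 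Conversely, step (c) is easier than you anticipate: Talagrand's matching bounds already give $\Exp{W_1(\text{empirical},\text{uniform})} = O\bigl(\log(n)\, n^{-1/N}\bigr)$ on a ball of radius $\delta_n$, which is $o(\delta_n^3)$ as soon as $3\beta < 1/N$ (automatic from $\beta \le \alpha$ and $\alpha + 2\beta \le 1/N$), with no need for the cancellation-exploiting coupling you propose. So what you flag as the ``main obstacle'' is in fact routine, and the step you mention only in passing (``the other place where care is required'') is where the sharp hypothesis actually lives.
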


Theorem~\ref{thm:convergence_ollivier_almost_sparse_graphs_weighted} relates two different quantities. The first is the Ollivier-Ricci curvature in the graph between the node $x^\ast$ and another node $y_n^\ast$ that is at distance $\delta_n$ from $x^\ast$ in the direction of vector $\vec{v}$. The second is the Ricci curvature of the manifold at $x^\ast$ in the $\vec{v}$-direction. The theorem says that if we properly rescale the former, it converges in expectation to the latter.

\begin{remark}
\hfill\\
\begin{enumerate}
\item Note that Theorem~\ref{thm:convergence_ollivier_almost_sparse_graphs_weighted} states that $\delta_n^{-2} \, 2(N + 2)\kappa(x^\ast, y_n^\ast;\, \mathcal{G}_n^w)$ converges in the $L^1$ sense to $\Ric(\vec{v}, \vec{v})$. In particular, this implies the concentration result
\[
	\lim_{n \to \infty} \Prob{\left|\frac{2(N + 2)\kappa(x^\ast, y_n^\ast;\, \mathcal{G}_n^w)}{\delta_n^2} -\Ric(\vec{v}, \vec{v})\right| \ge \epsilon} = 0, \quad \text{for all } \epsilon > 0.
\]
\item Since $\varepsilon_n, \delta_n \to 0$, both the connectivity and measure neighborhoods of $x^\ast$ become smaller as $n$ grows. Indeed, curvature is a local property, so that measuring it more accurately requires smaller regions. 
\item While the connectivity neighborhood of $x^\ast$ is shrinking, the expected number of $x^\ast$'s neighbors lying in it is growing with $n$. To see this, note that for large enough $n$ the volume of the ball $\BallM{x}{\varepsilon_n}$ around $x \in \Mcal$ can be approximated by that of the $N$-dimensional Euclidean ball. Hence, for any $x \in \Gbb_n(x^\ast, y_n^\ast, \varepsilon_n)$, as $n \to \infty$
\[
	\Exp{|\Ncal_x|} = n \mathrm{vol}_\Mcal\left(\BallM{x}{\varepsilon_n}\right) = \Theta\left(n \varepsilon_n^N\right)
	= \Theta\left((\log(n))^{aN} n^{1-\alpha N}\right).
\]
The conditions of the theorem imply that $\alpha \le \alpha + 2\beta \le \frac{1}{N}$, so that $1 - \alpha N \ge 0$. This means that the average degree diverges faster than logarithmically if $\alpha N < 1$. More generally, the conditions of Theorem~\ref{thm:convergence_ollivier_almost_sparse_graphs_weighted} imply that the average degree always diverges faster than $\log(n)^2$.
\end{enumerate}
\end{remark}

If we consider the classic setting where the connection and measure radii are the same, $\varepsilon_n = \delta_n$, then the following result is a direct consequence of Theorem~\ref{thm:convergence_ollivier_almost_sparse_graphs_weighted}.

\begin{corollary}\label{cor:convergence_ollivier_dense_graphs_weighted}
Let $N \ge 2$, $(\Mcal, d_\Mcal)$ be a smooth, orientable, connected and compact $N$-dimensional Riemannian manifold, $x^\ast \in \mathcal{M}$ and $\vec{v}$ a unit tangent vector at $x^\ast$. Furthermore, let $\delta_n = \bigT{\log(n)^b n^{-\beta}}$, with $\beta \le \frac{1}{3N}$ and $b > \frac{2}{N}$ whenever $\beta = \frac{1}{3N}$. 

Let $y_n^\ast \in \Mcal$ be at distance $\delta_n$ in the direction of $\vec{v}$ and $G_n = \Gbb_n(x^\ast, y_n^\ast, \delta_n)$ be rooted random graphs on $\Mcal$. Then for the Ollivier-triple $\mathcal{G}_n^w = (G_n,d_{G_n}^w, {\bm m}^{G,w})$, it holds
\[
	\lim_{n \to \infty}  \Exp{\left|\frac{2(N + 2)\kappa(x^\ast, y_n^\ast;\, \mathcal{G}_n^w)}{\delta_n^2} -\Ric(\vec{v}, \vec{v})\right|} = 0.
\]
\end{corollary}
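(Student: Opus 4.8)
The plan is to obtain the corollary as the special case $\varepsilon_n = \delta_n$ of Theorem~\ref{thm:convergence_ollivier_almost_sparse_graphs_weighted}, so the entire task reduces to checking that the hypotheses of the corollary are exactly the specialization of those of the theorem. First I would set $\alpha = \beta$ and $a = b$ in the theorem; then $\varepsilon_n = \bigT{\log(n)^b n^{-\beta}} = \delta_n$, the rooted graphs $\Gbb_n(x^\ast, y_n^\ast, \varepsilon_n)$ and $\Gbb_n(x^\ast, y_n^\ast, \delta_n)$ coincide, the point $y_n^\ast$ is the same in both statements, and the Ollivier-triple $\mathcal{G}_n^w = (G_n, d_{G_n}^w, {\bm m}^{G,w})$ is literally the same object. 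Thus the conclusion of the theorem, once its hypotheses are verified, is verbatim the conclusion of the corollary.

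Next I would verify the numerical conditions of the theorem one by one under the substitution $\alpha = \beta$, $a = b$. The requirement $0 < \beta \le \alpha$ holds trivially since $\alpha = \beta$, with $\beta > 0$ understood throughout (as in the theorem and as is forced by $\delta_n \to 0$, i.e.\ by curvature being a local quantity). The condition $\alpha + 2\beta \le \tfrac1N$ becomes $3\beta \le \tfrac1N$, i.e.\ $\beta \le \tfrac1{3N}$, which is precisely the hypothesis of the corollary. The side condition ``$a \le b$ if $\alpha = \beta$'' is automatic because $a = b$. Finally, the boundary regime $\alpha + 2\beta = \tfrac1N$ of the theorem corresponds exactly to $\beta = \tfrac1{3N}$, and there the theorem demands $\min\{a, a + 2b\} > \tfrac2N$; with $a = b$ this reads $\min\{b, 3b\} = b > \tfrac2N$, matching the corollary's requirement $b > \tfrac2N$ whenever $\beta = \tfrac1{3N}$.

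With every hypothesis of Theorem~\ref{thm:convergence_ollivier_almost_sparse_graphs_weighted} confirmed in this regime, applying it yields
\[
	\lim_{n \to \infty}  \Exp{\left|\frac{2(N + 2)\kappa(x^\ast, y_n^\ast;\, \mathcal{G}_n^w)}{\delta_n^2} -\Ric(\vec{v}, \vec{v})\right|} = 0,
\]
which is the assertion of the corollary. Since this is a direct corollary, there is essentially no obstacle beyond the bookkeeping above; the only point that requires a little care is lining up the two distinct ``boundary'' descriptions ($\alpha + 2\beta = \tfrac1N$ in the theorem versus $\beta = \tfrac1{3N}$ in the corollary) together with their respective logarithmic-correction conditions, and recording the implicit convention $\beta > 0$ that is needed for the theorem to apply.
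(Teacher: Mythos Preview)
Your proposal is correct and matches the paper's approach exactly: the paper simply states that the corollary is a direct consequence of Theorem~\ref{thm:convergence_ollivier_almost_sparse_graphs_weighted} in the case $\varepsilon_n = \delta_n$, without spelling out the parameter check, and your argument carries out precisely that verification. One tiny remark: when you write $\min\{b,3b\}=b$ you are implicitly using $b>0$, which is indeed guaranteed in the boundary case $\beta=\tfrac{1}{3N}$ since there $b>\tfrac{2}{N}>0$.
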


While the conditions in this corollary imply that the average degree in $\Gbb_n(x^\ast, y_n^\ast, \delta_n)$ diverges faster than $n^{2/3}$, Theorem~\ref{thm:convergence_ollivier_almost_sparse_graphs_weighted} works for graphs where the average degree can be almost as small as $\log(n)^2$. The crucial component for establishing the curvature convergence in graphs with so much smaller average degree is to consider different connection and measure radii and let the connection radius decrease at a faster rate than the measure radius, i.e.\ $\varepsilon_n \ll \delta_n$.

\begin{remark}[Extreme cases for convergence of curvature]
Corollary~\ref{cor:convergence_ollivier_dense_graphs_weighted} covers one set of extreme casse for the combination $a, b, \alpha$ and $\beta$ from Theorem~\ref{thm:convergence_ollivier_almost_sparse_graphs_weighted}, were we take $\beta$ to be as big as possible. This means that we compute the curvature using uniform probability measures on a set of nodes that is \emph{as small as possible}. For the true extreme case, let $\epsilon > 0$ be arbitrarily small and define $\beta = \frac{1 - \epsilon}{3N}$ and $b = \frac{2 +\epsilon}{N}$. Then, to calculate the curvature, we need to compute the Wasserstein metric between uniform probability measures on neighborhoods that contain
\[
	\bigT{n \varepsilon^N} = \bigT{n \delta_n^N} = \bigT{\log(n)^{2 + \epsilon} n^{\frac{2 + \epsilon}{3}}},
\]
number of nodes. The consequence, however, is that our graphs have average degree diverging at the same rate: $\log(n)^{2 + \epsilon} n^{\frac{2 + \epsilon}{3}}$.

In order to get graphs whose average degree diverges as slow as possible, we need to consider an other extreme case. Again let $\epsilon > 0$ be arbitrary small. Now we define
\[
	a = \frac{2 + \epsilon}{N}, \quad \alpha = \frac{1 - \epsilon}{N}, \quad b = a \quad \text{and } \beta = \frac{\epsilon}{2N}.
\]
For these choices we have that $\alpha + 2\beta = 1/N$ and $\min\{a, a + 2b\} = a > 2/N$ so that the result from Theorem~\ref{thm:convergence_ollivier_almost_sparse_graphs_weighted} holds. In this case, the average degree scales as
\[
	\bigT{n \varepsilon_n^N} = \bigT{ \log(n)^{Na} n ^{1 - N\alpha}} = \bigT{\log(n)^{2 + \epsilon} n^{\epsilon}},
\]
which is almost logarithmic. However, we now need to compute the Wasserstein metric with respect to the uniform measure on a number of nodes that scales as
\[
	\bigT{n \delta_n^N} = \bigT{\log(n)^{2 + \epsilon} n ^{1 - \epsilon/2}}.
\]
That is, in order to compute curvature on graphs with almost logarithmic average degree, we need to consider the uniform probability measure on almost the entire graph.
\end{remark}

\subsection{Graphs with hop count distance}

In the previous section we considered Ollivier-Ricci curvature of graphs on Riemannian manifolds, with graph edges weighted by manifold distances. These weights encode a lot of information about the manifold metric structure, so that one may feel not terribly surprised that we can recover manifold curvature from graph curvature using this information. The natural question is then if it is possible to prove convergence of Ollivier-Ricci curvature based on shortest path distances $d_G^s$ in \emph{unweighted} graphs. It turns out that this can be done under some slightly more restrictive conditions on the connection and measure radii. 

For this we define, for any random geometric graph $G_n = \Gbb(\varepsilon_n)$, the rescaled shortest path distance $d_G^\ast(x,y) = \varepsilon_n d_G^s(x,y)$. Similar to the previous setting we let $\BallGast{x}{\delta_n}$ denote the balls of radius $\delta_n$ around in $x \in G_n$ with respect to the metric $d_G^\ast$ and define the random walk measures
\[
	m^{G,\ast}_x(y) = \begin{cases}
		\frac{1}{|\BallGast{x}{\delta_n}|} &\mbox{if } y \in \BallGast{x}{\delta_n}\\
		0 &\mbox{else}.
	\end{cases}
\]

\begin{theorem}\label{thm:convergence_ollivier_graphs_hopcount}
Let $(\Mcal, d_\Mcal)$ be a smooth, orientable, connected and compact $2$-dimensional Riemannian manifold, $x^\ast \in \mathcal{M}$ and $\vec{v}$ a unit tangent vector at $x^\ast$. Furthermore, let $\varepsilon_n = \bigT{\log(n)^a n^{-\alpha}}$, $\delta_n = \bigT{\log(n)^b n^{-\beta}}$ where the constants satisfy
\[
	0 < \beta \le 1/9 \quad \text{and} \quad 3\beta \le \alpha \le \frac{1-3\beta}{2},
\]
and $a < 3b$ if $\alpha = 3\beta$ and $2a + 3b > 1$ if $\alpha = \frac{1-3\beta}{2}$. 

Let $y_n^\ast \in \Mcal$ be at distance $\delta_n$ in the direction of $\vec{v}$ and $G_n = \Gbb_n(x^\ast, y_n^\ast, \varepsilon_n)$ be rooted random graphs on $\Mcal$. Then for the Ollivier-triple $\mathcal{G}_n^\ast = (G_n,d_{G_n}^\ast, {\bm m}^{G,\ast})$, it holds
\[
	\lim_{n \to \infty}  \Exp{\left|\frac{2(N + 2)\kappa(x^\ast, y_n^\ast;\, \mathcal{G}_n^\ast)}{\delta_n^2} -\Ric(\vec{v}, \vec{v})\right|} = 0.
\]
\end{theorem}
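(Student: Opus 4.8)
The plan is to run essentially the same argument as in the proof of Theorem~\ref{thm:convergence_ollivier_almost_sparse_graphs_weighted}, after controlling the one genuinely new ingredient, the rescaled hop-count metric $d_{G_n}^\ast$. The curvature $\kappa(x^\ast,y_n^\ast;\,\mathcal{G}_n^\ast)$ depends on $d_{G_n}^\ast$ only through the denominator $d_{G_n}^\ast(x^\ast,y_n^\ast)$, through the supports $\BallGast{x^\ast}{\delta_n}$ and $\BallGast{y_n^\ast}{\delta_n}$ of the measures ${\bm m}^{G,\ast}$, and through the transportation cost in $W_1^{\mathcal{G}_n^\ast}$, which is evaluated only on pairs of nodes lying within $d_\Mcal$-distance $O(\delta_n)$ of $x^\ast$. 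So it suffices to show that, on an event of probability $1-o(1)$, the metric $d_{G_n}^\ast$ agrees with $d_\Mcal$ --- and hence with the weighted metric $d_{G_n}^w$ of Theorem~\ref{thm:convergence_ollivier_almost_sparse_graphs_weighted} --- up to an additive error of size $o(\delta_n^3)$, uniformly over all such pairs. Indeed, $\kappa$ is of order $\delta_n^2$, and writing $\kappa_i = 1 - W_i/D_i$ for the two Ollivier-triples one has $|\kappa_1-\kappa_2| = O\bigl((|D_1-D_2|+|W_1-W_2|)/\delta_n\bigr)$ with $D_i, W_i = \Theta(\delta_n)$; an $o(\delta_n^3)$ change of the metric perturbs the denominator by $o(\delta_n^3)$, reshuffles only an $o(\delta_n^2)$-fraction of the mass of each measure (the symmetric difference of the two supports lies in an annulus around $x^\ast$, resp.\ $y_n^\ast$, of relative width $o(\delta_n^2)$), and changes the transportation cost uniformly by $o(\delta_n^3)$, hence changes $\delta_n^{-2}\kappa$ by $o(1)$. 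The convergence of $2(N+2)\delta_n^{-2}\kappa(x^\ast,y_n^\ast;\,\mathcal{G}_n^\ast)$ to $\Ric(\vec{v},\vec{v})$ then follows from the manifold-side Wasserstein expansion already established for Theorem~\ref{thm:convergence_ollivier_almost_sparse_graphs_weighted}.

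The heart of the proof is therefore the metric-comparison statement: on an event of probability $1-o(1)$, for every pair of nodes $u,v$ of $G_n$ with $d_\Mcal(x^\ast,u),\, d_\Mcal(x^\ast,v) \le 3\delta_n$,
\[
	0 \;\le\; d_{G_n}^\ast(u,v) - d_\Mcal(u,v) \;\le\; C\bigl(\varepsilon_n + \rho_n^2\,\delta_n\bigr),
\]
where $C$ depends only on curvature bounds of $\Mcal$ near $x^\ast$ and $\rho_n\to 0$ is a mesh parameter to be chosen. The lower bound is immediate: every edge of $G_n$ has $d_\Mcal$-length at most $\varepsilon_n$, so $\varepsilon_n$ times the hop length of any $u$--$v$ path is at least the $d_\Mcal$-length of the associated broken geodesic, which is at least $d_\Mcal(u,v)$. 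For the upper bound I would cover a thin geodesic tube of radius $\rho_n\varepsilon_n$ around the minimizing $\Mcal$-geodesic from $u$ to $v$ by $\Theta(d_\Mcal(u,v)/\varepsilon_n)$ cells of diameter $\rho_n\varepsilon_n$, spaced $\varepsilon_n(1-c\rho_n^2)$ apart along the geodesic; a single cell is free of Poisson points with probability $e^{-\Theta(n\rho_n^2\varepsilon_n^2)}$, so a union bound over all the $O(n^2\,\delta_n/\varepsilon_n)$ cells produced by all relevant pairs shows that, with probability $1-o(1)$, every cell is occupied, and choosing one point per cell yields a $G_n$-path whose hop length exceeds $d_\Mcal(u,v)/\varepsilon_n$ by at most a factor $1+O(\rho_n^2)$ (each consecutive hop being controlled by the tube geometry, with tangential shifts telescoping) plus an $O(1)$ rounding term. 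Picking $\rho_n\to 0$ with $n\rho_n^2\varepsilon_n^2 \gg \log n$ (possible since the density hypotheses imply $\alpha+\beta<1/2$, so the average degree is polynomial) and $\rho_n = o(\delta_n)$ makes the error $o(\delta_n^3)$, the crucial new requirement being $\varepsilon_n = o(\delta_n^3)$ --- which is exactly the hypothesis $3\beta\le\alpha$, with $a<3b$ in the boundary case $\alpha=3\beta$. The remaining density and boundary conditions of the theorem (including $2a+3b>1$ when $\alpha=(1-3\beta)/2$) are precisely what is needed to push the various concentration estimates underlying the comparison lemma through, uniformly over the polynomially many relevant pairs.

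Granting the comparison lemma, the rest is routine. It gives $|d_{G_n}^\ast(x^\ast,y_n^\ast) - \delta_n| = o(\delta_n^3)$ for the denominator; it sandwiches $\BallM{x}{\delta_n(1-o(\delta_n^2))}\cap\Pcal_n \subseteq \BallGast{x}{\delta_n} \subseteq \BallM{x}{\delta_n(1+o(\delta_n^2))}\cap\Pcal_n$ for $x\in\{x^\ast,y_n^\ast\}$, so that $m_x^{G,\ast}$ differs from the uniform measure on $\BallM{x}{\delta_n}\cap\Pcal_n$ --- and hence from $m_x^{G,w}$ --- by $o(\delta_n^2)$ in total variation and $o(\delta_n^3)$ in $W_1$; and, together with the stability of $W_1$ under perturbing the cost uniformly and the marginals in $W_1$, it yields the required $o(\delta_n^3)$ bound on $|W_1^{\mathcal{G}_n^\ast}(m_{x^\ast}^{G,\ast}, m_{y_n^\ast}^{G,\ast}) - W_1^{\mathcal{G}_n^w}(m_{x^\ast}^{G,w}, m_{y_n^\ast}^{G,w})|$. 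Substituting into the perturbation estimate and invoking the manifold-side expansion finishes the argument. The main obstacle is the comparison lemma itself: forcing the rescaled hop count to track $d_\Mcal$ with an $o(\delta_n^2)$-relative rather than merely $o(1)$-relative error, uniformly over polynomially many pairs and in a regime where the average degree may be only polynomially large. It is this requirement that makes the hypotheses strictly stronger than in Theorem~\ref{thm:convergence_ollivier_almost_sparse_graphs_weighted} --- notably the new constraint $\varepsilon_n = o(\delta_n^3)$, i.e.\ $3\beta\le\alpha$ --- and, presumably, why the statement is confined to $N=2$, where the tube-tiling error budget is tight enough to close.
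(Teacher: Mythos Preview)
Your high-level strategy is the same as the paper's: reduce Theorem~\ref{thm:convergence_ollivier_graphs_hopcount} to the general machinery behind Theorem~\ref{thm:convergence_ollivier_almost_sparse_graphs_weighted} by showing that the rescaled hop-count distance $d_G^\ast$ approximates $d_\Mcal$ to within an $o(\delta_n^3)$ additive error on a high-probability event. The paper packages this as ``$d_G^\ast$ is a $\delta_n$-good approximation'' (Definition~\ref{def:delta_good_approx}, Proposition~\ref{prop:shortest_graph_distance_delta_approx}) and then invokes the general Theorem~\ref{thm:convergence_ollivier_graphs_general}. Your perturbation bookkeeping for $\kappa$, the measures, and $W_1$ is essentially the same as the paper's.

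The gap is in your proof of the metric-comparison lemma itself. With round cells of diameter $\rho_n\varepsilon_n$ spaced $\varepsilon_n(1-c\rho_n^2)$ along the geodesic, two points in consecutive cells can be at distance up to $\varepsilon_n(1-c\rho_n^2)+\rho_n\varepsilon_n>\varepsilon_n$, so they need not be adjacent in $G_n$; to guarantee adjacency you must space at $\varepsilon_n(1-O(\rho_n))$, and then the hop count exceeds $d_\Mcal(u,v)/\varepsilon_n$ by a factor $1+O(\rho_n)$, not $1+O(\rho_n^2)$. The ``tangential shifts telescoping'' observation controls the \emph{weighted} path length $\sum d_\Mcal(p_i,p_{i+1})$, but not the hop count: telescoping tangential coordinates does nothing for the number of steps. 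With only $O(\rho_n)$ relative error you need $\rho_n=o(\delta_n^2)$ together with $n\rho_n^2\varepsilon_n^2\gg\log n$, which forces $2\alpha+4\beta<1$ --- strictly stronger than the hypothesis $2\alpha+3\beta\le 1$, so your argument does not cover the full parameter range of the theorem.

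The paper sidesteps this by importing the sharp stretch bound of D\'iaz, Mitsche, Perarnau and P\'erez-Gim\'enez (Theorem~\ref{thm:graph_distance_vs_euclidean}), which gives relative error $O\bigl((\log n/(n\varepsilon_n^2))^{2/3}\bigr)$ via a considerably more delicate construction; this is exactly what makes $2\alpha+3\beta\le 1$ sufficient, and it is also the reason the statement is confined to $N=2$. If you want a self-contained tiling proof, you would need anisotropic cells --- thin in the tangential direction and of transverse width $\rho_n\varepsilon_n$ --- and an optimisation over the two cell dimensions; done correctly this reproduces the $2/3$ exponent, but it is not the argument you sketched. A smaller point: the good event must have probability $1-o(\delta_n^2)$ (the paper uses $1-o(\delta_n^3)$), not merely $1-o(1)$, since on its complement you only have the crude bound $|\kappa|\le 2$ and divide by $\delta_n^2$.
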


\begin{remark}
\hfill\\
\begin{enumerate}
\item Note that unlike Theorem~\ref{thm:convergence_ollivier_almost_sparse_graphs_weighted}, here we do not include any information on the distances between nodes on the manifold. We only need the connection radius.
\item Observe that the theorem allows to select an $\alpha$ that is arbitrary close to $\frac{1}{2}$. In particular,
\[
	\Exp{|\Ncal_x|} = \bigT{n \varepsilon_n^2} = \bigT{\log(n)^{2a} n^{1 - 2\alpha}} 
	\le \bigT{\log(n)^{2a} n^{6\beta}}.
\]
Hence by selecting a small $\beta$ we have a discrete notion of curvature that converges on graphs with almost logarithmic average degree, without using any information on the manifold.
\item Theorem~\ref{thm:convergence_ollivier_graphs_hopcount} currently only works in $2$-dimensional manifolds. This is because the proof relies on results for the stretch (the fraction $d_G/d_\Mcal$) for random geometric graphs in $2$-dimensional Euclidean space~\cite{diaz2016relation}. Our proof techniques, however, immediately allow the results to be extended to higher dimensions, once similar types of stretch results for these spaces are obtained.
\end{enumerate}
\end{remark}

\subsection{Summary, comments, caveats, and outlook}

In summary, we have proven that upon proper rescaling, the Ollivier-Ricci curvature of random geometric graphs on a Riemannian manifold converges to the Ricci curvature of the underlying manifold.

Our first result, Theorem~\ref{thm:convergence_ollivier_almost_sparse_graphs_weighted}, establishes convergence of Ollivier-Ricci curvature for a wide range of connectivity and measure radii. In particular, it contains as a corollary the classical setting where both radii are the same, Corollary~\ref{cor:convergence_ollivier_dense_graphs_weighted}. The theorem does, however, require knowledge of pairwise distances between connected nodes in the manifold. 

Our second result, Theorem~\ref{thm:convergence_ollivier_graphs_hopcount}, relaxes this requirement and establishes the same convergence without any knowledge of distances in the manifold. This does come at the price of slightly more restrictive conditions on the possible connection and measure radii. Still, as for the first result, the convergence holds al the way up to graphs whose average degree grows very slowly (almost logarithmically).

To the best of our knowledge, these are the first rigorous results on the convergence of a discrete notion of curvature of random combinatorial objects to a traditional continuum notion of curvature of smooth space. 

While the classical setting for Ollivier-Ricci graph curvature uses probability measures (random walks) on balls of the same radius as the graph connection radius, in this paper we allow the radii to be different. This is an important generalization. In particular, we find that in order for the curvature to converge on graphs with almost logarithmic average degree, we need the probability measure radius to be much larger than the connection radius. This is intuitively expected because in order to ``feel'' any curvature in graphs with such a low density, we really need to consider large ``mesoscopic'' neighborhoods in them since otherwise all we could see is local ``microscopic'' Euclidean flatness. It would be interesting to see how this more general approach would generalize known results for the classical setting of Ollivier-Ricci curvature of graph families that have been investigated in the past, such as trees or Erd\H{o}s-R\'{e}nyi random graphs~\cite{bhattacharya2015exact,jost2014ollivier}. 

In our recent numeric experiments~\cite{hoorn2020ollivier}, we have seen that in manifold-distance-weighted random geometric graphs, the Ollivier-Ricci curvature convergence holds even for graphs with constant average degree. Unfortunately, the proof techniques presented in this paper do not allow for a direct generalization to this setting. Therefore, other techniques are needed to (dis)confirm the convergence of Ollivier-Ricci curvature of graphs with constant average degree. We note that one definitely cannot expect Ollivier-Ricci curvature to converge in all possible graph sparsity settings. For example, we definitely need the giant component to exist to talk about any curvature convergence.

For the task of learning latent geometry in networks, our results can still be improved, particularly by removing the requirement to know the connection radius. When presented just with a truly unweighted realization of a random geometric graph, this radius needs first to be learnt, estimated. It would thus be interesting to see if convergence would still hold if we replace the true value of the connection radius with its consistent estimation, e.g.\ based on the average degree. Here we expect the speed of curvature convergence (if any) to depend on the speed of estimator convergence in a possibly nontrivial way.

Finally, now that we have seen that Ollivier-Ricci curvature of random combinatorial discretizations of smooth spaces converges to their Ricci curvature, it would be interesting to investigate whether such convergence also holds for other popular notions of discrete curvature. Forman-Ricci curvature~\cite{sreejith2016forman} appears to be a good next candidate for such investigation.

\section{Proof overview}\label{sec:proof_overview}

Our main results in Theorems~\ref{thm:convergence_ollivier_almost_sparse_graphs_weighted} and~\ref{thm:convergence_ollivier_graphs_hopcount} follow from our more general result on the Ollivier-Ricci curvature convergence in graphs whose edges are always weighted by some weights. That is, we assume that all edges in our graphs always have some weights, assigned according to some scheme. For our general result it is not important what these weights or their assignment scheme are. What is important is that the graph distance $d_G$ between a node pairs is a good approximation of the manifold distance $d_\Mcal$ between the corresponding pair of points. To quantify how good this approximation is, we introduce the following definition.

\begin{definition}\label{def:delta_good_approx}
Let $(\Mcal, d_\Mcal)$ be a $N$-dimensional Riemannian manifold and $G_n = \Gbb_n(x^\ast, \varepsilon_n)$ a rooted random graph on $\Mcal$. A graph distance $d_G$ on $G_n$ is said to be a \emph{$\delta_n$-good approximation of $d_\Mcal$} if $d_\Mcal \le d_G$ and the following holds (as $n \to \infty$): there exits a $Q > 3$ and $\xi_n = \smallO{\delta_n}$ such that with probability $1 - \smallO{\delta_n^3}$,
\begin{equation}\label{eq:def_distance_approximation}
	\left|d_\Mcal(u,v) - d_G(u,v)\right| \le d_\Mcal(u,v)\xi_n^2 + \xi_n^3,
\end{equation}
holds for all $u,v \in \BallM{x^\ast}{Q \delta_n} \cap G_n$.
\end{definition}

\begin{remark}[Asymptotic expressions]
Most of our results will deal with asymptotic relations, e.g. $\xi_n = \smallO{\delta_n}$. Unless stated otherwise, these asymptotic relations will always be understood as $n \to \infty$. 
\end{remark}

Recall that $\BallG{x}{\delta}$ denotes the set of nodes in the graph that are at graph distance at most $\delta$ from $x$,
\[
	m_x^G(y) = \begin{cases}
		\frac{1}{|\BallG{x}{\delta_n}|}	&\mbox{if } y \in \BallG{x}{\delta_n}, \\
		0	&\mbox{else,}
	\end{cases}
\]
and define
\begin{equation}\label{eq:def_lambda_n}
	\lambda_n = \log(n)^{\frac{2}{N}} n^{-\frac{1}{N}}.
\end{equation}
This $\lambda_n$ will play the role of an additional radius, for extending the graph distance $d_G$ to the manifold. In short, to define a distance between $u, v \in \Mcal$, we will connect $u$ and $v$ to all points of the graph withing radius $\lambda_n$ and then use the graph distance. The radius $\lambda_n$ has been selected such that the expected number of nodes inside any ball $\BallM{x}{\lambda_n}$ is of the order $\bigT{\log(n)^2}$. Hence, the probability of observing no node of the graph inside any such ball is $\bigO{e^{-\log(n)^2}} = \smallO{n^{-1}}$, which is sufficiently small. More details on the use of $\lambda_n$ can be found in Section~\ref{ssec:extending_graph_distance} 

Our general result is then as follows. 

\begin{theorem}\label{thm:convergence_ollivier_graphs_general}
Let $N \ge 2$, $(\Mcal, d_\Mcal)$ be a smooth, orientable, connected and compact $N$-dimensional Riemannian manifold, $x^\ast \in \mathcal{M}$ and $\vec{v}$ a unit tangent vector at $x^\ast$. Furthermore, let $\varepsilon_n \le \delta_n = \smallO{1}$ be such that $\lambda_n = \smallO{\varepsilon_n}$ and $\lambda_n = \smallO{\delta_n^3}$. 

Let $y_n^\ast \in \Mcal$ be at distance $\delta_n$ in the direction of $\vec{v}$, $G_n = \Gbb_n(x^\ast, y_n^\ast, \varepsilon_n)$ be rooted random graphs on $\Mcal$ and $d_G$ a $\delta_n$-good approximation of $d_\Mcal$. Then, if we consider the Ollivier-triple $\mathcal{G}_n = (G_n,d_{G}, {\bm m}^G)$,
\[
	\lim_{n \to \infty}  \Exp{\left|\frac{2(N + 2)\kappa(x^\ast, y_n^\ast;\, \mathcal{G}_n)}{d_G(x^\ast, y_n^\ast)^2} -\Ric(\vec{v}, \vec{v})\right|} = 0.
\]
\end{theorem}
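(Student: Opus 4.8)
The plan is to squeeze the graph curvature $\kappa(x^\ast, y_n^\ast;\, \mathcal{G}_n)$ between two copies of its continuum analogue and then quote the classical Riemannian asymptotics of Ollivier curvature. Let $u_p$ be the uniform probability measure with respect to $\vol_\Mcal$ on the geodesic ball $\BallM{p}{\delta_n}$, and let $W_1^\Mcal$ be the Wasserstein distance of $(\Mcal, d_\Mcal)$. The input from the smooth side is the computation of Ollivier~\cite{ollivier2009ricci} (in the regime in which the distance between the two centres equals the measure radius): since $x^\ast$ and $y_n^\ast$ lie at distance $\delta_n$ along the geodesic in the direction of $\vec v$,
\[
	W_1^\Mcal(u_{x^\ast}, u_{y_n^\ast}) = \delta_n\left(1 - \frac{\delta_n^2}{2(N+2)}\,\Ric(\vec v, \vec v)\right) + \smallO{\delta_n^3}.
\]
Everything else is error control: I must show that replacing $u_{x^\ast}, u_{y_n^\ast}$ by the graph measures ${\bm m}^G$, and $d_\Mcal$ by $d_G$, perturbs the Wasserstein distance by only $\smallO{\delta_n^3}$ and the normalising length $d_G(x^\ast, y_n^\ast)$ by only $\smallO{\delta_n^3}$. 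The bookkeeping is made forgiving by the assumption $d_\Mcal \le d_G$, which forces $d_G(x^\ast, y_n^\ast) \ge d_\Mcal(x^\ast, y_n^\ast) = \delta_n$; hence any $\smallO{\delta_n^3}$ error, once divided by $d_G(x^\ast, y_n^\ast)^2$ and multiplied by $2(N+2)$, is $\smallO{1}$.

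I would first split off a good event $\mathcal{E}_n$ on which (i) the approximation inequality~\eqref{eq:def_distance_approximation} holds for all pairs in $\BallM{x^\ast}{Q\delta_n} \cap G_n$; (ii) each graph measure $m^G_{x^\ast}, m^G_{y_n^\ast}$ is within a deterministic $\smallO{\delta_n^3}$ of $u_{x^\ast}, u_{y_n^\ast}$ in $W_1^\Mcal$; and (iii) the balls $\BallG{x^\ast}{\delta_n}$ and $\BallG{y_n^\ast}{\delta_n}$ are non-empty. Definition~\ref{def:delta_good_approx} handles (i); (iii) fails only with super-polynomially small probability because the expected node count in a ball of radius $\bigT{\delta_n}$ is $\bigT{n\delta_n^N}$, which diverges since $\lambda_n = \smallO{\delta_n^3}$ forces $n\delta_n^{3N}\to\infty$; and (ii) follows from a Gaussian concentration bound for Wasserstein distances of empirical measures together with the same Poisson count, giving altogether $\Prob{\mathcal{E}_n^c} = \smallO{\delta_n^2}$. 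On $\mathcal{E}_n^c$ I use the crude bounds $d_G(x^\ast, y_n^\ast) \ge \delta_n$ and $|\kappa(x^\ast, y_n^\ast;\, \mathcal{G}_n)| \le 2$ — the latter from the independent coupling, since $W_1 \le d_G(x^\ast, y_n^\ast) + 2\delta_n$ as the supports of $m^G_{x^\ast}, m^G_{y_n^\ast}$ have $d_G$-radius at most $\delta_n$ — so the integrand is $\bigO{\delta_n^{-2}}$ and its contribution to the expectation is $\bigO{\delta_n^{-2}}\cdot\smallO{\delta_n^2} = \smallO{1}$.

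On $\mathcal{E}_n$ the comparison proceeds in two steps joined by the triangle inequality for $W_1$. Step one changes the ground metric: any $u$ in the support of $m^G_{x^\ast}$ and $w$ in the support of $m^G_{y_n^\ast}$ satisfy $d_\Mcal(u,w) \le 3\delta_n < Q\delta_n$, so~\eqref{eq:def_distance_approximation} gives $|d_G(u,w) - d_\Mcal(u,w)| \le 3\delta_n\xi_n^2 + \xi_n^3 = \smallO{\delta_n^3}$ uniformly (here $\xi_n = \smallO{\delta_n}$ is used), whence $W_1^{\mathcal{G}_n}(m^G_{x^\ast}, m^G_{y_n^\ast}) = W_1^\Mcal(m^G_{x^\ast}, m^G_{y_n^\ast}) + \smallO{\delta_n^3}$, and the same estimate for the pair $(x^\ast, y_n^\ast)$ gives $d_G(x^\ast, y_n^\ast) = \delta_n + \smallO{\delta_n^3}$. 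Step two changes the measures: using $d_\Mcal \le d_G$ together with~\eqref{eq:def_distance_approximation} I sandwich each graph ball $\BallG{p}{\delta_n}$ between manifold balls of radii $\delta_n(1\pm\smallO{1})$, so $m^G_p$ equals, up to $\smallO{\delta_n^3}$ in total variation, the empirical measure of the Poisson points inside $\BallM{p}{\delta_n}$; combined with (ii) — where the $\lambda_n$-extension device of Section~\ref{ssec:extending_graph_distance} lets continuum mass be re-routed onto graph nodes within $\lambda_n = \smallO{\delta_n^3}$ at negligible transport cost, legitimate because $\lambda_n = \smallO{\varepsilon_n}$ — this gives $W_1^\Mcal(m^G_p, u_p) = \smallO{\delta_n^3}$ for $p \in \{x^\ast, y_n^\ast\}$. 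Chaining these with the Ollivier expansion yields $W_1^{\mathcal{G}_n}(m^G_{x^\ast}, m^G_{y_n^\ast}) = \delta_n - \frac{\delta_n^3}{2(N+2)}\Ric(\vec v,\vec v) + \smallO{\delta_n^3}$, hence $\kappa(x^\ast, y_n^\ast;\, \mathcal{G}_n) = \frac{\delta_n^2}{2(N+2)}\Ric(\vec v,\vec v) + \smallO{\delta_n^2}$, and after dividing by $d_G(x^\ast, y_n^\ast)^2 = \delta_n^2(1+\smallO{1})$ the integrand equals $\Ric(\vec v,\vec v) + \smallO{1}$ on $\mathcal{E}_n$; combined with the bound on $\mathcal{E}_n^c$, the expectation tends to $0$.

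The main obstacle is step two at the required precision $\smallO{\delta_n^3}$. A naive bound on the Wasserstein distance between an $m$-point empirical measure and its target scales as $\mathrm{diam}\cdot m^{-1/N}$ (with an extra $\sqrt{\log m}$ when $N = 2$), which on a ball of radius $\delta_n$ with $m = \bigT{n\delta_n^N}$ points is $\bigT{n^{-1/N}}$; this is $\smallO{\delta_n^3}$ exactly under the hypotheses — $\lambda_n = \smallO{\delta_n^3}$ is precisely what makes $n^{-1/N} = \smallO{\delta_n^3}$ — so both the mean estimate and the concentration estimate needed for (ii) must be carried out carefully, making sure the curvature signal of order $\delta_n^3$ is never swamped by the $n^{-1/N}$-scale fluctuations; it is here that the heavy overlap of the two balls (their centres are only $\delta_n$ apart) is convenient, since the fluctuations largely cancel in the difference $W_1^\Mcal(m^G_{x^\ast}, m^G_{y_n^\ast}) - W_1^\Mcal(u_{x^\ast}, u_{y_n^\ast})$. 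The remaining ingredients — the Poisson concentration, the ball-sandwiching from the good approximation, and the smooth expansion of $W_1^\Mcal$ — are comparatively routine given $\xi_n = \smallO{\delta_n}$, $\lambda_n = \smallO{\varepsilon_n}$ and $\lambda_n = \smallO{\delta_n^3}$.
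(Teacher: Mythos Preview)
Your proposal is correct and follows the paper's global strategy: control $|W_1^{G}(m_{x^\ast}^G, m_{y_n^\ast}^G) - W_1(\mu_{x^\ast}^{\delta_n}, \mu_{y_n^\ast}^{\delta_n})|$ to order $\smallO{\delta_n^3}$, handle the complementary bad event by the crude bound $|\kappa|\le 2$ together with $d_G(x^\ast,y_n^\ast)\ge\delta_n$, and then invoke Theorem~\ref{thm:convergence_curvature_manifolds}. Your Step~one is in fact cleaner than the paper's route: since both $m_{x^\ast}^G$ and $m_{y_n^\ast}^G$ are supported on graph nodes inside $\BallM{x^\ast}{Q\delta_n}$, inequality~\eqref{eq:def_distance_approximation} already gives $W_1^{G}(m_{x^\ast}^G, m_{y_n^\ast}^G) = W_1(m_{x^\ast}^G, m_{y_n^\ast}^G) + \smallO{\delta_n^3}$ directly, after which the triangle inequality for $W_1$ suffices --- this sidesteps the extended distance $\widetilde{d}_\Mcal$ of Lemma~\ref{lem:good_event} and Proposition~\ref{prop:L1_error_adjusted_wasserstein_manifold}, which the paper builds precisely so as to have a single Wasserstein space containing both the discrete graph measures and the continuous $\mu^{\delta_n}$. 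Conversely, your Step~two is more elaborate than needed: the paper handles $W_1(m_p^G, \mu_p^{\delta_n})$ purely in expectation (Propositions~\ref{prop:coupling_graph_discrete_rw} and~\ref{prop:coupling_uniform_discrete_rw}) and carries the conditional expectation through, so no concentration estimate for your condition~(ii) is required, and your closing remark about cancellation between the two overlapping balls is unnecessary since each term $W_1(m_p^G, \mu_p^{\delta_n})$ is individually $\smallO{\delta_n^3}$ in mean. Two minor slips: the total-variation distance between $m_p^G$ and $m_p^\Mcal$ is only $\smallO{\delta_n^2}$, not $\smallO{\delta_n^3}$ (the $W_1$ conclusion survives after multiplying by the $\bigO{\delta_n}$ diameter); and the hypothesis $\lambda_n = \smallO{\delta_n^3}$ enters not through any ``re-routing'' of continuum mass onto graph nodes but simply because it is, up to logarithms, the condition $n^{-1/N} = \smallO{\delta_n^3}$ needed for the empirical--uniform matching estimate behind Proposition~\ref{prop:coupling_uniform_discrete_rw}.
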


Once we have established this general result, our main results in Theorems~\ref{thm:convergence_ollivier_almost_sparse_graphs_weighted} and~\ref{thm:convergence_ollivier_graphs_hopcount}  follow if we can show that the considered graph distances are $\delta_n$-good approximations.

A key ingredient in the proof of Theorem~\ref{thm:convergence_ollivier_graphs_general} is the convergence result for Ollivier-Ricci curvature for uniform measures on Riemannian manifolds, proved in the seminal paper on the topic~\cite{ollivier2009ricci}. In a high-level overview, our proof approximates Ollivier-Ricci curvature of probability measures on the graph with those on the manifold. Having obtained such an approximation with a required accuracy, we then apply the convergence result from~\cite{ollivier2009ricci}.

Since Ollivier-Ricci curvature is defined by the Wasserstein metric on probability measures, our analysis focuses on approximating the Wasserstein metric of discrete probability measures on the graph by the Wasserstein metric of uniform probability measures on the manifold. This is done in three steps: 1) extend the graph distance $d_G$ to a distance $\widetilde{d}_\Mcal$ on the manifold such that the Wasserstein metric $\widetilde{W}_1$ with respect to this new distance is a good approximation of the Wasserstein metric $W_1$ on the manifold, 2) show that the Wasserstein metric between the probability measure $m_x^G$ on the graph and the discrete probability measure $m_x^\Mcal$ on the nodes within the ball $\BallM{x}{\delta_n}$ is sufficiently small and 3) show that the Wasserstein metric between the uniform measure on $\BallM{x}{\delta_n}$ and the discrete probability measure $m_x^\Mcal$ is sufficiently small. 

\begin{remark}
In all cases, sufficiently small means that the error terms are of smaller order than $\delta_n^3$. This is because the Wasserstein metric is first divided by $\delta_n$ to obtain the curvature, which is then divided by $\delta_n^2$ to make it converge to the Ricci curvature.
\end{remark}

We proceed with explaining all ingredients and the three steps in more detail. We reiterate that unless stated otherwise, we will assume that $\varepsilon_n \le \delta_n$ are two sequences converging to zero such that $\lambda_n = \smallO{\varepsilon_n}$ and $\lambda_n = \smallO{\delta_n^3}$.

\subsection{Ollivier curvature on Riemannian manifolds}

Let $(\Mcal, d_\Mcal)$ be a smooth, orientable, connected and compact $N$-dimensional Riemannian manifold. For $x \in \Mcal$ and $\delta > 0$, we write $\BallM{x}{\delta} \subseteq \Mcal$ to denote the closed ball of radius $\delta$ around $x$, i.e. $\BallM{x}{\delta} = \{y \in \Mcal \, :\ , d_\Mcal(x,y) \le \delta\}$. Recall that
\[
	\vol_\Mcal(\BallM{x}{\delta}) := \int_{\BallM{x}{\delta}} \dd \vol_\Mcal(y),
\]
denotes the volume of the ball $\BallM{x}{\delta}$.

Now fix $\delta > 0$ and consider the uniform measure on balls of radius $\delta$. That is, for $x \in \Mcal$ we take the probability measure $\mu_x^\delta$ given by
\begin{equation}\label{eq:def_uniform_rw_manifold}
	\dd \mu^\delta_x(y) = \begin{cases}
		\frac{1}{\vol_\Mcal\left(\BallM{x}{\delta}\right)} \, \dd \vol_\Mcal(y) &\mbox{if } y \in \BallM{x}{\delta}\\
		0 &\mbox{else.}
	\end{cases}
\end{equation}
We will refer to $\mu^\delta_x$ as the \emph{uniform $\delta$-measure}.

The following result from~\cite{ollivier2009ricci} shows that for a uniform $\delta$-measure on a Riemannian manifold, the Ollivier curvature (properly rescaled) converges to the Ricci curvature as $\delta \to 0$.

\begin{theorem}[Example 7 in \cite{ollivier2009ricci}]\label{thm:convergence_curvature_manifolds}
Let $(\Mcal, d_\Mcal)$ be a smooth complete $N$-dimensional Riemannian manifold $x \in \mathcal{M}$ and $\vec{v}$ a 
unit tangent vector at $x$. Let $\delta > 0$ and $y$ be the point at distance $\delta$ in the direction of $\vec{v}$. Then if we consider the Ollivier-Ricci curvature $\kappa$ for the uniform $\delta$-measures given by \eqref{eq:def_uniform_rw_manifold}: 
\[
	\lim_{\delta \to 0} \frac{2(N+2)}{\delta^2} \kappa(x,y) = \Ric(\vec{v},\vec{v}).
\]
\end{theorem}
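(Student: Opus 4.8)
This is a purely local statement in Riemannian geometry, and I would prove it exactly in the manner of \cite{ollivier2009ricci}. Since $\kappa(x,y)=1-W_1(\mu^\delta_x,\mu^\delta_y)/\delta$ (here $d_\Mcal(x,y)=\delta$) and the normalisation is $2(N+2)/\delta^2$, it suffices to establish the two–sided estimate
\[
  W_1(\mu^\delta_x,\mu^\delta_y)=\delta\Bigl(1-\tfrac{\delta^2}{2(N+2)}\Ric(\vec v,\vec v)\Bigr)+O(\delta^4),
\]
and then divide by $\delta$, subtract from $1$, and multiply by $2(N+2)/\delta^2$. Throughout I would work in geodesic normal coordinates centred at $x$, at $y$, and at the midpoint $m$ of the minimising geodesic $\gamma\colon[0,\delta]\to\Mcal$ from $x$ to $y$ (so $\gamma(0)=x$, $\dot\gamma(0)=\vec v$); for $\delta$ below the injectivity radius, $\BallM{x}{\delta}$ is exactly the image under $\exp_x$ of the Euclidean $\delta$–ball in $T_x\Mcal$.

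\emph{Upper bound.} Let $P_\gamma\colon T_x\Mcal\to T_y\Mcal$ be parallel transport along $\gamma$ and define the transport map $T(\exp_x w)=\exp_y(P_\gamma w)$ from $\BallM{x}{\delta}$ to $\BallM{y}{\delta}$. To bound $d_\Mcal(\exp_x w,\exp_y P_\gamma w)$ I would use the length of the curve $t\mapsto\exp_{\gamma(t)}(W(t))$, where $W$ is the parallel field along $\gamma$ with $W(0)=w$: the first variation of arc length vanishes because $\langle W,\dot\gamma\rangle$ is constant, and the second variation reduces to $-\int_0^\delta\langle R(W,\dot\gamma)\dot\gamma,W\rangle\,dt$ since the transverse curves are geodesics, which gives, for $|w|\le\delta$,
\[
  d_\Mcal(\exp_x w,\exp_y P_\gamma w)\le\delta-\tfrac{\delta}{2}\langle R(w,\vec v)\vec v,w\rangle+O(\delta^4).
\]
Integrating against $\mu^\delta_x$ and using that the average of $\langle R(w,\vec v)\vec v,w\rangle$ over the Euclidean $\delta$–ball equals $\tfrac{\delta^2}{N+2}\Ric(\vec v,\vec v)$ (the average of $w_iw_j$ over that ball is $\tfrac{\delta^2}{N+2}$ if $i=j$ and $0$ otherwise, and $\Ric(\vec v,\vec v)=\sum_i\langle R(e_i,\vec v)\vec v,e_i\rangle$) yields $\int d_\Mcal(z,Tz)\,d\mu^\delta_x(z)\le\delta\bigl(1-\tfrac{\delta^2}{2(N+2)}\Ric(\vec v,\vec v)\bigr)+O(\delta^4)$. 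The map $T$ does not push $\mu^\delta_x$ exactly onto $\mu^\delta_y$, but its pointwise Jacobian and the ratio $\vol_\Mcal(\BallM{x}{\delta})/\vol_\Mcal(\BallM{y}{\delta})$ are each $1+O(\delta^3)$ — the $O(\delta^2)$ curvature corrections cancel to leading order because $d_\Mcal(x,y)=\delta$ — so repairing $T$ into an honest coupling adds only $O(\delta^3)\cdot\delta=O(\delta^4)$ to $W_1$. This gives the upper bound on $W_1$, hence $\tfrac{2(N+2)}{\delta^2}\kappa(x,y)\ge\Ric(\vec v,\vec v)+O(\delta)$.

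\emph{Lower bound.} By Kantorovich–Rubinstein duality, $W_1(\mu^\delta_x,\mu^\delta_y)\ge\int f\,d\mu^\delta_x-\int f\,d\mu^\delta_y$ for every $1$–Lipschitz $f$. I would take $f$ to be the signed geodesic distance to the hypersurface $\Sigma=\exp_m(\vec v^{\perp})$ through the midpoint $m$ orthogonal to $\gamma$; $f$ is automatically $1$–Lipschitz, and it is smooth on the $O(\delta)$–tube containing $\BallM{x}{\delta}\cup\BallM{y}{\delta}$ since $\Sigma$ has no focal points within that distance. A second–order Jacobi–field expansion of the normal exponential map of $\Sigma$, followed by averaging over the two balls, gives $\int f\,d\mu^\delta_x-\int f\,d\mu^\delta_y=\delta\bigl(1-\tfrac{\delta^2}{2(N+2)}\Ric(\vec v,\vec v)\bigr)+O(\delta^4)$ — the same value as the coupling cost, so the parallel–transport coupling is optimal up to $O(\delta^4)$. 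Combining the two bounds proves the displayed estimate for $W_1$ and hence $\tfrac{2(N+2)}{\delta^2}\kappa(x,y)\to\Ric(\vec v,\vec v)$ as $\delta\to0$.

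\emph{Main obstacle.} The geometric ideas — the parallel–transport coupling and the factor $1/(N+2)$ from averaging a quadratic form over a ball — are short; the real work is the error bookkeeping: verifying that the Jacobians of $\exp_x,\exp_y$ and of $T$, the volume mismatch of the two balls, the variation of $R$ along $\gamma$, and the cubic Taylor remainder in the arc–length variation all contribute only $o(\delta^3)$ to $W_1$, which hinges on the comparability $d_\Mcal(x,y)=\delta$ of the two length scales in the problem. The single most delicate point is the matching lower bound: producing an \emph{exactly} $1$–Lipschitz test function that nonetheless detects the $\Ric$–term at order $\delta^3$, equivalently showing the parallel–transport coupling is optimal to that order. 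This is precisely the content of Example~7 in \cite{ollivier2009ricci}, which in the present paper one simply cites; the sketch above is how I would reconstruct it.
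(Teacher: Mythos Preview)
Your proposal is correct and you are explicitly aware of the situation: the paper does not prove this theorem at all but simply cites it as Example~7 of \cite{ollivier2009ricci}, using it as a black-box input to the proof of Theorem~\ref{thm:convergence_ollivier_graphs_general}. The sketch you give (parallel-transport coupling for the upper bound, a $1$-Lipschitz signed-distance test function for the matching lower bound, together with the $1/(N+2)$ averaging identity) is exactly Ollivier's argument, so there is nothing to compare.
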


\begin{remark}
The result in Theorem~\ref{thm:convergence_curvature_manifolds} clearly exhibits the local nature of curvature as it holds in the limit where the distance $d_\Mcal(x,y) = \delta$ between the two points goes to zero.
\end{remark}

Taking $\delta = \delta_n$, $x = x^\ast$ and $y = y_n^\ast$ in the above theorem, we have that the rescaled Ollivier-Ricci curvature associated to the uniform $\delta_n$-measures converges to the Ricci curvature as $n \to \infty$. The main strategy for proving Theorem~\ref{thm:convergence_ollivier_graphs_general} is to compare this ``uniform" version of the curvature $\kappa$ on the manifold to the discrete version on the graph. 

More precisely, we need to prove that
\begin{equation}\label{eq:proof_overview_main}
	\Exp{\left|W_1^G(m_{x^\ast}^G,m_{y_n^\ast}^G) - W_1(\mu_{x^\ast}^{\delta_n}, \mu_{y_n^\ast}^{\delta_n})\right|}
	= \smallO{\delta_n^3}
\end{equation}
There are two complicating factors here. First, we have to deal with two Wasserstein metrics defined on two different spaces. Second, we have to compare discrete probability measures with continuous ones. We deal with the different Wasserstein metrics in the next section and with comparing the different measures in Section~\ref{ssec:approx_graph_measures} and Section~\ref{ssec:coupling_continuous_discrete_measures}.

\subsection{Extending the graph distance to the manifold}

In order to compare the two different Wasserstein metrics in~\eqref{eq:proof_overview_main} we extend the graph distance $d_G$ to a distance $\widetilde{d}_\Mcal$ defined on a sufficiently large part of $\Mcal$. In particular, we will consider the ball $\BallM{x^\ast}{Q \delta_n}$, with $Q > 3$ from Definition~\ref{def:delta_good_approx}. The extension is such that for any two nodes $x,y \in G_n$, $d_G(x,y) = \widetilde{d}_\Mcal(x,y)$, so that $W_1^G(m_{x^\ast}^G,m_{y_n^\ast}^G)$ can be replaced by the Wasserstein metric associated with $\widetilde{d}_\Mcal$.

Recall the definition of $\lambda_n$ from~\eqref{eq:def_lambda_n}. Take $G_n = \Gbb_n(x^\ast, y_n^\ast, \delta_n)$ and let $U \subset \mathcal{M}$ be a countable set of points. Then we define the graph $G_{n}(U)$ obtained from $G_n$ by adding the points of $U$ to the vertex set and connecting each $u \in U$ to any other node $x \in G_n \setminus U$ for which $d_{\mathcal{M}}(x,u) \le \lambda_n/2$. After this, we assign to each new edge $(u,x)$ the weight $d_{\Mcal}(x,u)(1 + \xi_n^2) + \xi_n^3$, with $\xi_n$ from Definition~\ref{def:delta_good_approx}. We can then extend the graph distance to the manifold by defining $\widetilde{d}_{\mathcal{M}}(u,v)$ to be the graph distance $d_G(u,v)$ computed in the extended graph $G_{n}(\{u,v\})$ with the added weights. Observe that if $x,y \in G_n$ then $\widetilde{d}_{\mathcal{M}}(x,y) = d_G(x,y)$ so that the distance on nodes of $G_n$ does not change and hence $\widetilde{d}_{\mathcal{M}}$ is a true extension of $d_{G}$. In addition, by definition of the graph distance it immediately follows that $\widetilde{d}_{\mathcal{M}}(u,v) = 0$ if and only if $u = v$. Figure~\ref{fig:extended_distance} shows an illustration of the extended distance.

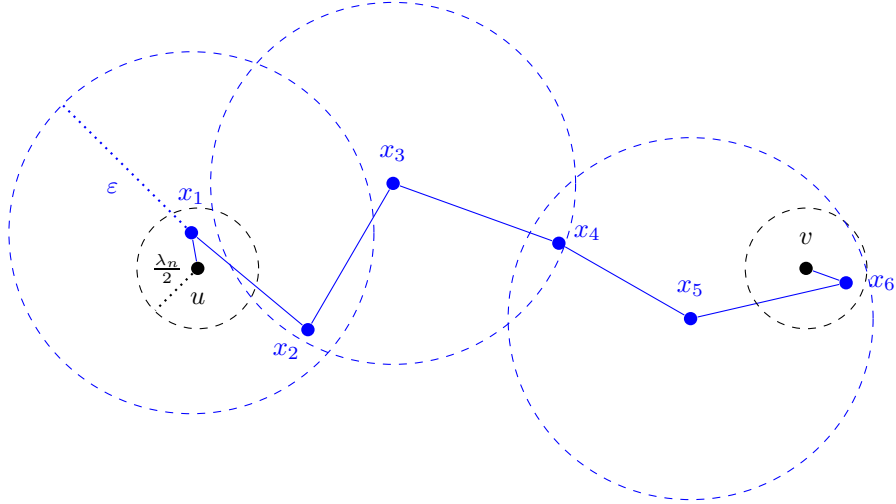
\begin{figure}[!t]
\centering
\begin{tikzpicture}[scale=0.8]
	\tikzstyle{vertex}=[fill, circle, inner sep=0pt, minimum size=5pt]
	\tikzstyle{edge}=[color=black]
	
	\draw node[vertex] (u) at (0,0) {};
	\draw node[vertex] (v) at (10,0) {};
	
	\path (u)+(100:0.6) node[vertex,blue] (x0) {};
	\path (x0)+(320:2.5) node[vertex,blue] (x1) {}; 
	\path (x1)+(60:2.8) node[vertex,blue] (x2) {}; 
	\path (x2)+(340:2.9) node[vertex,blue] (x3) {};
	\path (x3)+(-30:2.5) node[vertex,blue] (x4) {};
	\path (v)+(340:0.7) node[vertex,blue] (x5) {}; 
	
	\path (u)+(270:0.5) node {$u$};
	\path (v)+(90:0.5) node {$v$};

	\path (x0)+(90:0.6) node {\color{blue} $x_1$};
	\path (x1)+(-135:0.5) node {\color{blue} $x_2$};
	\path (x2)+(90:0.5) node {\color{blue} $x_3$};
	\path (x3)+(20:0.5) node {\color{blue} $x_4$};
	\path (x4)+(90:0.5) node {\color{blue} $x_5$};
	\path (x5)+(0:0.6) node {\color{blue} $x_6$};
	
	\draw[blue] (u) -- (x0) -- (x1) -- (x2) -- (x3) -- (x4) -- (x5) -- (v);
	
	\draw[dashed] (u) circle  (1cm);
	\draw[dashed] (v) circle  (1cm);
	
	\draw[dashed,blue] (x0) circle (3cm);
	\draw[dashed,blue] (x2) circle (3cm);
	\draw[dashed,blue] (x4) circle (3cm);

	\path (u)+(225:1) node[inner sep=0pt, minimum size=0pt] (lambda) {};
	\path (u)+(180:0.5) node {\small $\frac{\lambda_n}{2}$};
	\draw[dotted,thick] (u) -- (lambda);
	
	\path (x0)+(135:3) node[inner sep=0pt, minimum size=0pt] (epsilon) {};
	\path (x0)+(150:1.5) node {\color{blue} \small $\varepsilon$};
	\draw[dotted,thick,blue] (x0) -- (epsilon);
	
\end{tikzpicture}
\caption{Illustration of the extended graph distance $\widetilde{d}_\Mcal$. Here $u$ is connected to node $x_1$ and $v$ to $x_6$ and the shortest geodesic-weighted path between $x_1$ and $x_6$ goes over $5$ edges.}
\label{fig:extended_distance}
\end{figure}

It is important to note that this extended distance depends on the random graph $G_n$. Therefore, it could happen that two added points $u,v \in U$ are not connected in $G_n(U)$, i.e. there does not exist a path from $u$ to $v$ in the extended graph. This happens if there are no nodes in $\BallM{u}{\lambda_n/2}$ or in $\BallM{v}{\lambda_n/2}$ or if none of the node pairs $(x,y) \in \BallM{u}{\lambda_n/2} \times \BallM{v}{\lambda_n/2}$ are connected by a path in $G_n$. Therefore, to justify the definition of the extended manifold distance we need to make sure that, with sufficiently high probability, theses situations do not occur. 

\begin{lemma}\label{lem:good_event}
Let $G_n = \mathbb{G}_n(x^\ast, y_n^\ast, \delta_n)$ and $Q > 3$ be the constant from Definition~\ref{def:delta_good_approx}. Then, there exists an event $\Omega_n$ satisfying $\Prob{\Omega_n} \ge 1- \smallO{\delta_n^3}$ such that on this event the following holds:
\begin{enumerate}[\upshape $\Omega$1)]
\item $(\BallM{x^\ast}{Q \delta_n}, \widetilde{d}_\Mcal)$ is a metric space,
\item $\widetilde{d}_\Mcal(u,v) = d_\Mcal(u,v) + \smallO{\delta_n^3}$ and
\item any two nodes $u,v \in \BallM{x^\ast}{Q \delta_n}$ are connected by a path in the graph.
\end{enumerate}
\end{lemma}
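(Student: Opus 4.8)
The plan is to build the event $\Omega_n$ as an intersection of three "good" sub-events, one for each of the properties $\Omega 1$–$\Omega 3$, each of which fails with probability $\smallO{\delta_n^3}$; a union bound then gives $\Prob{\Omega_n} \ge 1 - \smallO{\delta_n^3}$. The key structural observation is that all three properties hinge on a single geometric fact: for $n$ large, every ball $\BallM{x}{\lambda_n/2}$ with center in (a slight enlargement of) $\BallM{x^\ast}{Q\delta_n}$ contains at least one node of $\Pcal_n$. Indeed, since $\Mcal$ is smooth and compact, there is a uniform lower bound $\vol_\Mcal(\BallM{x}{\lambda_n/2}) \ge c\lambda_n^N$ for small radii, so the expected number of Poisson points in such a ball is $\bigT{n\lambda_n^N} = \bigT{\log(n)^2}$ by the choice of $\lambda_n$ in~\eqref{eq:def_lambda_n}. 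The probability that a fixed such ball is empty is thus $\bigO{e^{-c'\log(n)^2}}$. To pass from "a fixed ball" to "all balls" one covers $\BallM{x^\ast}{(Q+1)\delta_n}$ by a polynomial-in-$n$ number (say $\bigO{n^{O(1)}}$) of balls of radius $\lambda_n/4$; if each of these is nonempty then every $\lambda_n/2$-ball centered in the region contains a node, and the union bound still leaves failure probability $\bigO{n^{O(1)} e^{-c'\log(n)^2}} = \smallO{n^{-k}}$ for every $k$, in particular $\smallO{\delta_n^3}$. Call this event $\Omega_n^{(1)}$.

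For $\Omega 3$, connectivity of any two nodes $u,v \in \BallM{x^\ast}{Q\delta_n}$ in the graph, I would use the standard random-geometric-graph chaining argument on the manifold: take a minimizing geodesic from $u$ to $v$ (which exists by completeness and lies inside a slightly larger ball for $n$ large since $d_\Mcal(u,v) \le 2Q\delta_n \to 0$), place $\bigO{\delta_n/\lambda_n}$ equally spaced waypoints along it at spacing $\lambda_n/4$, and on $\Omega_n^{(1)}$ pick a node in the $\lambda_n/4$-ball around each waypoint; consecutive chosen nodes are within $\lambda_n/2 \le \varepsilon_n$ (using $\lambda_n = \smallO{\varepsilon_n}$ and $\varepsilon_n \le \delta_n$), hence adjacent in $G_n$. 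This produces an explicit path, so $\Omega_n^{(1)}$ already implies $\Omega 3$. The remaining subtlety for $\Omega 1$ and $\Omega 2$ is that we additionally need the \emph{added endpoint} edges and the graph-distance estimate to behave: on $\Omega_n^{(1)}$ any $u,v$ are connected to nodes within $\lambda_n/2$, so $G_n(\{u,v\})$ is connected and $\widetilde d_\Mcal$ is finite on $\BallM{x^\ast}{Q\delta_n}$; symmetry and the triangle inequality for $\widetilde d_\Mcal$ follow from the definition of weighted shortest-path distance, and $\widetilde d_\Mcal(u,v) = 0 \iff u = v$ was already noted before the lemma, giving $\Omega 1$.

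The heart of the matter, and the step I expect to be the main obstacle, is $\Omega 2$: the quantitative estimate $\widetilde d_\Mcal(u,v) = d_\Mcal(u,v) + \smallO{\delta_n^3}$. Here I would intersect $\Omega_n^{(1)}$ with the event $\Omega_n^{(2)}$ on which the $\delta_n$-good-approximation inequality~\eqref{eq:def_distance_approximation} holds for all $u,v \in \BallM{x^\ast}{Q\delta_n}\cap G_n$ (probability $1 - \smallO{\delta_n^3}$ by Definition~\ref{def:delta_good_approx}). For the \emph{upper} bound on $\widetilde d_\Mcal(u,v)$: concatenate a short edge from $u$ to a nearby node $x$ (length $\le d_\Mcal(u,x)(1+\xi_n^2)+\xi_n^3 = \bigO{\lambda_n} = \smallO{\delta_n^3}$ since $\lambda_n = \smallO{\delta_n^3}$), then a near-geodesic path of nodes from $x$ to $y$ whose $d_G$-length is at most $d_\Mcal(x,y)(1+\xi_n^2)+\xi_n^3$ by the good-approximation property (all intermediate nodes and $x,y$ lying in the appropriate ball of radius $Q\delta_n$, which requires checking the chaining path stays inside — true since the path tracks a geodesic of length $\bigO{\delta_n}$), then a short edge from $y$ to $v$; since $d_\Mcal(x,y) \le d_\Mcal(u,v) + \bigO{\lambda_n}$ and $d_\Mcal(u,v) = \bigO{\delta_n}$, the correction is $\bigO{\delta_n \xi_n^2} + \bigO{\xi_n^3} + \bigO{\lambda_n} = \smallO{\delta_n^3}$ using $\xi_n = \smallO{\delta_n}$. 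For the \emph{lower} bound, $\widetilde d_\Mcal(u,v) \ge d_\Mcal(u,v)$ follows because every edge weight in $G_n(\{u,v\})$ is at least the corresponding manifold distance (for the original $G_n$ edges because $d_G$ is a $\delta_n$-good approximation with $d_\Mcal \le d_G$; for the added endpoint edges because the weight $d_\Mcal(x,u)(1+\xi_n^2)+\xi_n^3 \ge d_\Mcal(x,u)$), so any weighted path has length at least the manifold length of the corresponding concatenated curve, which is at least $d_\Mcal(u,v)$ by the triangle inequality. Combining the two bounds yields $\Omega 2$. The delicate bookkeeping is ensuring that all nodes invoked in the upper-bound chain genuinely lie within the radius-$Q\delta_n$ ball where~\eqref{eq:def_distance_approximation} is available — this is where the slack $Q > 3$ is used — and that the accumulated errors from the $\bigO{\delta_n/\lambda_n}$ or so path segments, each of size $\bigO{\xi_n^3}$, still sum to $\smallO{\delta_n^3}$; this last point is the reason the definition phrases the error as a \emph{relative} term $d_\Mcal(u,v)\xi_n^2$ plus a single additive $\xi_n^3$ rather than a per-edge error, so that a single application of the good-approximation bound to the pair $(x,y)$ suffices and no summation over edges is needed.
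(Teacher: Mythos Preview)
Your overall strategy matches the paper's: the good event is indeed the intersection of a ``no empty $\lambda_n$-ball'' covering event (your $\Omega_n^{(1)}$, the paper's $C_n$) and the event $D_n$ on which the $\delta_n$-good approximation inequality~\eqref{eq:def_distance_approximation} holds. Your arguments for $\Omega 3$ (geodesic chaining) and $\Omega 2$ (connect $u,v$ to nearby nodes $x,y$ and apply~\eqref{eq:def_distance_approximation} once to the pair $(x,y)$) are essentially what the paper does.

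There is, however, a genuine gap in your treatment of $\Omega 1$. You assert that ``the triangle inequality for $\widetilde d_\Mcal$ follow[s] from the definition of weighted shortest-path distance.'' It does not. The point is that $\widetilde d_\Mcal(u,v)$ is computed in the graph $G_n(\{u,v\})$, a graph that \emph{depends on the pair} $(u,v)$. For three points $u,v,z$, the distances $\widetilde d_\Mcal(u,z)$ and $\widetilde d_\Mcal(z,v)$ are computed in $G_n(\{u,z\})$ and $G_n(\{z,v\})$ respectively; concatenating the two optimal paths yields a walk from $u$ to $v$ in $G_n(\{u,v,z\})$, not in $G_n(\{u,v\})$. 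Since the extra vertex $z$ may create a shortcut, one cannot immediately bound $\widetilde d_\Mcal(u,v)$ by this concatenated length.

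The paper handles this (Lemma~\ref{lem:extended_distance_metric_space}) by showing that on $\Omega_n$ any such shortcut can be removed: if a $u$--$v$ path in $G_n(\{u,v,z\})$ passes through $z$ via neighbours $y_1, y_2$, then $d_\Mcal(y_1,y_2) \le \lambda_n \le \varepsilon_n$, so $y_1,y_2$ are already adjacent in $G_n$, and the $\delta_n$-good approximation gives $d_G(y_1,y_2) \le d_\Mcal(y_1,y_2)(1+\xi_n^2)+\xi_n^3 \le w_{y_1 z} + w_{z y_2}$, precisely because the added edges carry the inflated weight $d_\Mcal(\cdot,z)(1+\xi_n^2)+\xi_n^3$. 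This is the reason those weights are defined that way, and this step is where both the event $D_n$ and the specific choice of endpoint-edge weights are actually used for $\Omega 1$; you should not skip it.
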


The first property ensures that our extended distance is an actual distance. Moreover, by the second property, this extended distance is a good approximation of the true distance on the manifold. Finally, the last property makes sure that $d_G(x^\ast, y_n^\ast) = \widetilde{d}_\Mcal(x^\ast, y_n^\ast) < \infty$, so that the curvature $\kappa$ between $x^\ast$ and $y_n^\ast$ is well-defined and not forced to be zero. The precise definition of $\Omega_n$ is not needed to understand the high level arguments as well as the proof of the main results. For now, let us refer to $\Omega_n$ as the \emph{good event}. Details on this event can be found in Section~\ref{ssec:extending_graph_distance}.

Let $\widetilde{W}_1$ denote the Wasserstein metric with respect to $\widetilde{d}_\Mcal$, which is only well-defined on the \emph{good event} $\Omega_n$. Since the distance is determined by the graph $G_n = \Gbb_n(x^\ast, y_n^\ast, \delta_n)$, the Wasserstein metric is also a random object. The following proposition states that, on the event $\Omega_n$, the difference between the Wasserstein metrics $\widetilde{W}_1$ and $W_1$ is small. The proof is given in Section~\ref{ssec:extending_graph_distance}.

\begin{proposition}\label{prop:L1_error_adjusted_wasserstein_manifold}
Let $G_n = \mathbb{G}_n(x^\ast, \varepsilon_n)$ and $\mu_1, \mu_2$ be two probability measures on $\Mcal$ with support contained in $\BallM{x^\ast}{Q \delta_n}$. Then
\[
	\CExp{\left|\widetilde{W}_1(\mu_1, \mu_2) - W_1(\mu_1, \mu_2)\right|}{\Omega_n} = \smallO{\delta_n^3}.
\]
\end{proposition}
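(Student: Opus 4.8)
The plan is to bound the difference between the two Wasserstein metrics directly from the definition as an infimum over couplings, together with the quantitative closeness of the two distances $\widetilde{d}_\Mcal$ and $d_\Mcal$ on the support of the measures, which is exactly property $\Omega 2$ of Lemma~\ref{lem:good_event}. The starting observation is the elementary fact that if two metrics $d$ and $d'$ on a common space satisfy $|d(p,q) - d'(p,q)| \le \eta(p,q)$ pointwise, then for \emph{any} coupling $\mu$ of $\mu_1$ and $\mu_2$ one has $|\int d \, d\mu - \int d' \, d\mu| \le \int \eta \, d\mu$; taking infima over couplings on both sides and using that the infimum of a sum is at least the sum of infima (and at most either term plus a supremum of the other) yields $|W_1(\mu_1,\mu_2) - \widetilde W_1(\mu_1,\mu_2)| \le \sup_{p,q \in \mathrm{supp}} \eta(p,q)$. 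So the whole problem reduces to controlling $\sup_{u,v \in \BallM{x^\ast}{Q\delta_n}} |\widetilde d_\Mcal(u,v) - d_\Mcal(u,v)|$ on the good event.

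First I would make the reduction above precise: condition on $\Omega_n$ (on which $\widetilde{W}_1$ is well-defined since $(\BallM{x^\ast}{Q\delta_n}, \widetilde d_\Mcal)$ is a metric space with all relevant pairs connected, by $\Omega 1$ and $\Omega 3$), fix an arbitrary coupling $\mu$ of $\mu_1, \mu_2$, both supported in $\BallM{x^\ast}{Q\delta_n}$, and write
\[
	\left|\int_{\Mcal \times \Mcal} \widetilde d_\Mcal \, d\mu - \int_{\Mcal \times \Mcal} d_\Mcal \, d\mu\right| \le \int_{\Mcal \times \Mcal} \left|\widetilde d_\Mcal(u,v) - d_\Mcal(u,v)\right| d\mu(u,v) \le \sup_{u,v \in \BallM{x^\ast}{Q\delta_n}} \left|\widetilde d_\Mcal(u,v) - d_\Mcal(u,v)\right|.
\]
Passing to the infimum over couplings shows $|\widetilde W_1(\mu_1,\mu_2) - W_1(\mu_1,\mu_2)|$ is bounded by the same supremum. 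The remaining step is then to upgrade the pointwise-in-probability statement $\Omega 2$, which reads $\widetilde d_\Mcal(u,v) = d_\Mcal(u,v) + \smallO{\delta_n^3}$, into a \emph{uniform} bound over the (uncountably many) pairs in $\BallM{x^\ast}{Q\delta_n}$; I would expect the $\smallO{\delta_n^3}$ in the statement of $\Omega 2$ to already be meant uniformly — i.e.\ there is a deterministic sequence $\rho_n = \smallO{\delta_n^3}$ with $\sup_{u,v}|\widetilde d_\Mcal(u,v) - d_\Mcal(u,v)| \le \rho_n$ on $\Omega_n$ — in which case we get $|\widetilde W_1(\mu_1,\mu_2) - W_1(\mu_1,\mu_2)| \le \rho_n = \smallO{\delta_n^3}$ deterministically on $\Omega_n$, and hence the conditional expectation is $\smallO{\delta_n^3}$ as claimed. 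If instead $\Omega 2$ is only available as a pointwise statement, one needs a short covering/continuity argument: both $\widetilde d_\Mcal$ and $d_\Mcal$ are $1$-Lipschitz (in fact $\widetilde d_\Mcal$ is Lipschitz with constant close to $1$ by the edge-weight rescaling by $1+\xi_n^2$) in each variable with respect to $d_\Mcal$, so it suffices to control the difference on a finite $\lambda_n$-net of $\BallM{x^\ast}{Q\delta_n}$ and absorb the net error, which is $O(\lambda_n \cdot (1+\xi_n^2)) = \smallO{\delta_n^3}$ by the standing assumption $\lambda_n = \smallO{\delta_n^3}$.

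The main obstacle, and the place where the real content lives, is precisely establishing the uniform version of $\Omega 2$ — i.e.\ that the extended graph distance $\widetilde d_\Mcal$ approximates $d_\Mcal$ to within $\smallO{\delta_n^3}$ \emph{simultaneously} for all pairs in the ball, not just for a fixed pair with high probability. This is where the $\delta_n$-good-approximation hypothesis (Definition~\ref{def:delta_good_approx}), the choice of $\lambda_n$ (so that every ball $\BallM{x}{\lambda_n/2}$ contains a graph node with probability $1 - \smallO{n^{-1}}$, beating a union bound over a polynomial-size net), and the edge-weight inflation by $(1+\xi_n^2)$ plus $\xi_n^3$ all conspire: connecting $u$ to a nearby graph node costs at most $\lambda_n/2 \cdot (1+\xi_n^2) + \xi_n^3 = \smallO{\delta_n^3}$, the good-approximation inequality~\eqref{eq:def_distance_approximation} controls the graph-internal part of the path up to $d_\Mcal(u,v)\xi_n^2 + \xi_n^3 = \smallO{\delta_n^2}\cdot\smallO{1} + \smallO{\delta_n^3}$, wait — one must check $d_\Mcal(u,v)\xi_n^2 \le Q\delta_n \cdot \xi_n^2$, and since $\xi_n = \smallO{\delta_n}$ this is $\smallO{\delta_n^3}$, as needed — and the lower bound $d_\Mcal \le d_G = \widetilde d_\Mcal$ is built into the definitions. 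Since that uniform estimate is exactly what Lemma~\ref{lem:good_event}$(\Omega 2)$ asserts and is proved in Section~\ref{ssec:extending_graph_distance}, here I would simply invoke it; the proposition then follows from the three-line coupling argument above.
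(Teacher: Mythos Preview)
Your proposal is correct and follows essentially the same route as the paper. The paper packages your ``elementary fact'' into a standalone lemma (Lemma~\ref{lem:wasserstein_metric_approx_general}: if $|d - \widetilde d| \le K$ uniformly on $U$, then $|W_1 - \widetilde W_1| \le K$ for measures supported in $U$), proves it via optimal couplings exactly as you sketch, and then invokes the uniform bound from $\Omega 2$ of Lemma~\ref{lem:good_event}; your verification that the $\smallO{\delta_n^3}$ in $\Omega 2$ is uniform in $u,v$ (via $d_\Mcal(u,v)\xi_n^2 \le 2Q\delta_n \cdot \smallO{\delta_n^2}$ and $\lambda_n, \xi_n^3 = \smallO{\delta_n^3}$) matches the paper's proof of that lemma as well.
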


Recall that $\widetilde{d}_\Mcal(x,y) = d_G(x,y)$ if $x,y \in G_n$, and so $W_1^G(m_{x^\ast}^G,m_{y_n^\ast}^G) = \widetilde{W}_1(m_{x^\ast}^G,m_{y_n^\ast}^G)$. Hence, since the uniform $\delta_n$-measures  $\mu_{x^\ast}^{\delta_n}$ and $\mu_{y_n^\ast}^{\delta_n}$ are probability measures on $\Mcal$ with support contained in $\BallM{x^\ast}{Q \delta_n}$, Proposition~\ref{prop:L1_error_adjusted_wasserstein_manifold} implies that on the good event,
\[
	\left|W_1^G(m_{x^\ast}^G,m_{y_n^\ast}^G) - W_1(\mu_{x^\ast}^{\delta_n},\mu_{y_n^\ast}^{\delta_n})\right| = \left|\widetilde{W}_1(m_{x^\ast}^G,m_{y_n^\ast}^G) - \widetilde{W}_1(\mu_{x^\ast}^{\delta_n},\mu_{y_n^\ast}^{\delta_n})\right| + \smallO{\delta_n^3}.
\]
holds in expectation.

This is helpful because both Wasserstein metrics in the expression on the right hand side are now defined on the same space. Therefore, since $\widetilde{W}_1$ is a distance, the reverse triangle inequality implies
\[
	\left|\widetilde{W}_1(m_{x^\ast}^G,m_{y_n^\ast}^G) - \widetilde{W}_1(\mu_{x^\ast}^{\delta_n},\mu_{y_n^\ast}^{\delta_n})\right|
	\le \widetilde{W}_1(m_{x^\ast}^G, \mu_{x^\ast}^{\delta_n}) + \widetilde{W}_1(m_{y_n^\ast}^G, \mu_{y_n^\ast}^{\delta_n})
\]

Applying Proposition~\ref{prop:L1_error_adjusted_wasserstein_manifold} again we get that
\[
	\left|\widetilde{W}_1(m_{x^\ast}^G,m_{y_n^\ast}^G) - \widetilde{W}_1(\mu_{x^\ast}^{\delta_n},\mu_{y_n^\ast}^{\delta_n})\right|
	\le W_1(m_{x^\ast}^G, \mu_{x^\ast}^{\delta_n}) + W_1(m_{y_n^\ast}^G, \mu_{y_n^\ast}^{\delta_n}) + \smallO{\delta_n^3},
\]
holds in expectation, conditioned on the good event. However, the right hand side no longer involves the extended distance. Hence, it now suffices to show that for any $x \in \BallM{x^\ast}{\delta_n}$,
\begin{equation}\label{eq:proof_overview_measures}
	\Exp{W_1(m_{x}^G, \mu_{x}^{\delta_n})} = \smallO{\delta_n^3}.
\end{equation}

\subsection{Approximating probability measures on graph balls}\label{ssec:approx_graph_measures}

Recall that $\BallM{x}{\delta_n}$ denotes the closed ball around $x \in \Mcal$ with radius $\delta_n$ according to the manifold distance $d_\Mcal$. The first step in establishing~\eqref{eq:proof_overview_measures} is to move from uniform measures on the graph balls $\BallG{x}{\delta_n}$ to uniform measures on the nodes of the graph that lie in the manifold balls $\BallM{x}{\delta_n}$. The reason for this is that $y \in \BallG{x}{\delta_n}$ does not necessarily imply that $y \in \BallM{x}{\delta_n}$, nor vice versa. This creates difficulties when comparing the measures $m_{x}^G$ and $\mu_x^{\delta_n}$.

Let $G_n = \Gbb_n(x^\ast, \varepsilon_n)$ be rooted random graphs on $\Mcal$. Then we define the probability measures ${\bf m}^\Mcal$ on the nodes of $G_n$ as
\begin{equation}\label{eq:def_discrete_measure_M}
	m_x^\Mcal(y) = \begin{cases}
		\frac{1}{\left|\BallM{x}{\delta_n} \cap G_n\right|} &\mbox{if } y \in \BallM{x}{\delta_n} \cap G_n\\
		0 &\mbox{else.}
	\end{cases}
\end{equation}

Although the uniform measures $m_{x^\ast}^G$ and $m_{x^\ast}^\Mcal$ are not the necessarily equal, the Wasserstein metric between them is sufficiently small.

\begin{proposition}\label{prop:coupling_graph_discrete_rw}
Let $G_n = \Gbb_n(x^\ast, \varepsilon_n)$ be rooted random graphs on $\Mcal$ with graph distance $d_G$ that is a $\delta_n$-good approximation of $d_\Mcal$. Let $x \in \BallM{x^\ast}{\delta_n}$ and denote by $m_{x}^G$ the uniform measure on $\BallG{x}{\delta_n}$ and by $m_{x}^\Mcal$ the uniform measure on $\BallM{x}{\delta_n} \cap G_n$. Then
\[
	\Exp{W_1(m_{x}^G, m_{x}^\Mcal)} = \smallO{\delta_n^3}.
\]
\end{proposition}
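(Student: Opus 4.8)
The goal is to bound the Wasserstein distance $W_1(m_x^G, m_x^\Mcal)$ between the uniform measure on the $d_G$-ball $\BallG{x}{\delta_n}$ and the uniform measure on the set of graph nodes lying in the manifold ball $\BallM{x}{\delta_n}$. Both measures are supported on nodes of $G_n$, and they differ only on the symmetric difference $\BallG{x}{\delta_n} \,\triangle\, (\BallM{x}{\delta_n}\cap G_n)$. The strategy is to construct an explicit coupling that keeps mass on the intersection $A_n := \BallG{x}{\delta_n}\cap\BallM{x}{\delta_n}\cap G_n$ fixed in place (transport cost zero), and transports only the ``mismatched'' mass; then bound the cost of that transport by (mass moved) $\times$ (diameter of the region), and finally take expectations.

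First I would quantify the mismatched mass. Because $d_G$ is a $\delta_n$-good approximation of $d_\Mcal$ with $d_\Mcal \le d_G$, any node $y$ with $d_G(x,y)\le\delta_n$ automatically satisfies $d_\Mcal(x,y)\le\delta_n$, so $\BallG{x}{\delta_n}\subseteq\BallM{x}{\delta_n}\cap G_n$ on the good event; thus the symmetric difference reduces to the ``annulus'' $D_n := (\BallM{x}{\delta_n}\cap G_n)\setminus\BallG{x}{\delta_n}$. A node $y\in\BallM{x}{\delta_n}\cap G_n$ lies in $D_n$ only if $d_G(x,y)>\delta_n$, which by~\eqref{eq:def_distance_approximation} forces $d_\Mcal(x,y) > \delta_n - d_\Mcal(x,y)\xi_n^2 - \xi_n^3$, i.e. $d_\Mcal(x,y) \ge \delta_n(1 - \bigO{\xi_n^2})$. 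So $D_n$ is contained in the thin manifold shell $\BallM{x}{\delta_n}\setminus\BallM{x}{\delta_n(1-c\xi_n^2)}$ for a suitable constant $c$. Since $\Pcal_n$ is a Poisson process with intensity $\bigT{n}$, the expected number of nodes in this shell is $\bigT{n}\cdot\vol_\Mcal(\text{shell}) = \bigT{n\,\delta_n^N\,\xi_n^2}$ (the shell has relative width $\bigO{\xi_n^2}$), while $\Exp{|\BallM{x}{\delta_n}\cap G_n|} = \bigT{n\delta_n^N}$. Hence the \emph{fraction} of mismatched mass is, in expectation and with high probability, of order $\bigO{\xi_n^2}$; combined with concentration of the Poisson counts (so that the denominators $|\BallG{x}{\delta_n}|$ and $|\BallM{x}{\delta_n}\cap G_n|$ are both $\bigT{n\delta_n^N}$ with probability $1-\smallO{\delta_n^3}$), the total variation distance between $m_x^G$ and $m_x^\Mcal$ is $\bigO{\xi_n^2}$ with high probability.

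Next I would bound the transport cost. Any mass that must be moved is moved between two nodes both lying in $\BallM{x}{Q\delta_n}$ (indeed within $\BallM{x}{\delta_n}$ for the $m_x^\Mcal$ side, and within $\BallG{x}{\delta_n}$, hence within manifold distance $\delta_n$, for the $m_x^G$ side), so on the good event the $d_G$-distance between any such pair is at most $\widetilde d_\Mcal$-diameter $= \bigO{\delta_n}$ by Lemma~\ref{lem:good_event}. Therefore $W_1(m_x^G,m_x^\Mcal) \le (\text{TV distance})\cdot\bigO{\delta_n} = \bigO{\xi_n^2\,\delta_n}$ on the good event. Since $\xi_n = \smallO{\delta_n}$, this gives $\xi_n^2\delta_n = \smallO{\delta_n^3}$, which is exactly the required bound. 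To pass from ``with high probability'' to the expectation, I would split $\Exp{W_1(m_x^G,m_x^\Mcal)}$ over the good event and its complement: on $\Omega_n$ the above bound applies, while on $\Omega_n^c$ (probability $\smallO{\delta_n^3}$) I would use the crude deterministic bound $W_1 \le \mathrm{diam} \le$ some fixed constant (the manifold is compact), or fall back on the convention $\kappa=0$ when the relevant ball is disconnected; either way the contribution is $\smallO{\delta_n^3}$.

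\textbf{Main obstacle.} The delicate point is controlling the denominators: $W_1(m_x^G,m_x^\Mcal)$ is small only if \emph{both} $|\BallG{x}{\delta_n}|$ and $|\BallM{x}{\delta_n}\cap G_n|$ are comparable to their expectations, and one must be careful that the rescaled-distance/good-approximation hypotheses propagate uniformly to every $x\in\BallM{x^\ast}{\delta_n}$, not just a single fixed node — this is where the $\smallO{\delta_n^3}$-probability budget in Definition~\ref{def:delta_good_approx} and in Lemma~\ref{lem:good_event} gets spent, and one must check it is not overspent. I expect the rest (the shell-volume estimate and the coupling construction) to be routine; the bookkeeping of probabilities and the uniformity over $x$ is the part that needs genuine care.
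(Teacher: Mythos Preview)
Your proposal is correct and follows essentially the same strategy as the paper: build an explicit coupling that leaves mass on the common part in place, bound $W_1$ by (diameter)$\times$(mismatched fraction), identify the mismatched nodes as lying in a thin manifold shell of relative width $\bigO{\xi_n^2}$, and control the expected ratio of Poisson counts.

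There are two small differences worth noting. First, you exploit the deterministic inclusion $\BallG{x}{\delta_n}\subseteq\BallM{x}{\delta_n}\cap G_n$ (an immediate consequence of the hypothesis $d_\Mcal\le d_G$) to compare the two measures directly; the paper instead sandwiches \emph{both} sets between common inner and outer manifold balls $\Bcal_n^\pm=\BallM{x}{\delta_n^\pm}\cap G_n$ with $\delta_n^\pm=(\delta_n\pm\xi_n^3)/(1\mp\xi_n^2)$ and triangulates through the uniform measure on $\Bcal_n^+$. Your route is slightly shorter. Second, for the expectation of the ratio $|D_n|/|\BallM{x}{\delta_n}\cap G_n|$ the paper invokes a dedicated Poisson lemma (its Lemma~\ref{lem:technicall_poisson}) to get $\Exp{X_n/(X_n+Y_n)}=\bigO{\alpha_n/(\alpha_n+\beta_n)}$, whereas you rely on Poisson concentration of the denominator plus a good/bad event split; both routes work here because $n\delta_n^N\to\infty$ fast enough. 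Your ``main obstacle'' about uniformity in $x$ is not actually an issue: the proposition is for a single fixed $x$, and the $\delta_n$-good approximation in Definition~\ref{def:delta_good_approx} already gives~\eqref{eq:def_distance_approximation} simultaneously for all pairs in $\BallM{x^\ast}{Q\delta_n}$, which contains $\BallM{x}{\delta_n}$ since $Q>3$. Also, on the bad event you can use the deterministic bound $W_1\le 2\delta_n$ (both supports lie in $\BallM{x}{\delta_n}$ because $d_\Mcal\le d_G$), which is sharper than ``some fixed constant'' and already gives a contribution $\smallO{\delta_n^4}$.
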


The proof of this result is based on some simple computations regarding Poisson random variables and can be found in Section~\ref{ssec:proof_probability_measures_graphs}.

Proposition~\ref{prop:coupling_graph_discrete_rw} allows us to replace~\eqref{eq:proof_overview_measures} with
\begin{equation}\label{eq:proof_overview_coupling}
	\Exp{W_1(m_x^\Mcal, \mu_{x}^{\delta_n})} = \smallO{\delta_n^3}.
\end{equation}
Note that the only dependence on the graph is now in the amount of nodes placed inside the ball $\BallM{x}{\delta_n}$, which is completely determined by the Poisson process. All dependencies on the actual structure of the graph have been removed. This allows us to compute the Wasserstein metric between $m_x^\Mcal$ and $\mu_x^{\delta_n}$.

\subsection{Coupling continuous and discrete probability measures on $\Mcal$}\label{ssec:coupling_continuous_discrete_measures}

Recall that the Wasserstein metric $W_1(\mu_1,\mu_2)$ takes an infimum over all possible joint distributions (couplings) between the measures $\mu_1$ and $\mu_2$. Hence, to show that~\eqref{eq:proof_overview_coupling} holds, we need to design an optimal coupling (transport plan) between $m_x^\Mcal$ and $\mu_x^{\delta_n}$. The main idea here is to view $m_x^\Mcal$ as a discrete version of $\mu_x^{\delta_n}$. 

For now, let us assume that we are working in the $N$-dimensional Euclidean cube $\Mcal = [0,1]^N$. Given a realization of the Poisson process, a transport plan between $m_x^\Mcal$ and $\mu_x^{\delta_n}$ should assign to each measurable set $A \subseteq \BallM{x}{\delta_n}$ how much of the associated mass $\mu_x^{\delta_n}(A)$ is transported to each point of the Poisson process. To make it optimal, we should distribute the mass over those points that are closest to $A$. This problem is actually related to that of finding a minimal matching between points of a Poisson process and points of a grid on $[0,1]^N$, see~\cite{leighton1986tight,talagrand1994matching,shor1991minimax}. Here, minimal means that the largest distance between a point of the Poisson process and its matched grid point is minimized. 

The idea for the transport plan is as follows:
\begin{enumerate}
\item Place a grid on $[0,1]^N$.
\item Find a minimal matching between the Poisson process and the grid.
\item Given a $A \subseteq \BallM{x}{\delta_n}$, we take all points of the Poisson process that are matched to grid points inside $A$ and distribute the mass $\mu_x^{\delta_n}(A)$ equally over those points.
\end{enumerate}

Using known results for minimal matchings, it can then be shown that, under suitable conditions, the Wasserstein metric between $m_x^\Mcal$ and $\mu_x^{\delta_n}$ is $\smallO{\delta_n^3}$.

Finally, we need to extend these results in flat Euclidean space to the ball $\BallM{x}{Q \delta_n}$ in general $\Mcal$. For this we use that $\delta_n \to 0$ and that small neighborhoods of $x \in \Mcal$ can be mapped diffeomeorphically to the flat $N$-dimensional tangent space by the exponential map $\exp_{x} : T_{x} \Mcal \to \Mcal$. We then apply the matching results there and map back. Here we need to tread carefully, since the exponential map does not preserve distances. We thus fix a sufficiently small neighborhood $U$ around the origin of the tangent space at $x$. Then, for some fixed $0 < \xi < 1$ and large enough $n$ we have
\[
	\BallN{0}{\frac{\delta_n}{1 + \xi}} \subseteq \exp^{-1} \BallM{x}{\delta_n} 
	\subseteq \BallN{0}{\frac{\delta_n}{1 + \xi}},
\]
where $\BallN{0}{\delta}$ is the Euclidean ball of radius $\delta$. This then yields matching upper and lower bounds on the Wasserstein metric on $\Mcal$ in terms of the Wasserstein metric on the Euclidean space.

All the details of this approach are provided in Section~\ref{ssec:proof_coupling_discrete_continuous_measures}. In the end we obtain the following result.

\begin{proposition}\label{prop:coupling_uniform_discrete_rw}
For any point $x \in \Mcal$,
\[
	\Exp{W_1(m_x^\Mcal, \mu_x^{\delta_n})} = \smallO{\delta_n^3}.
\]
\end{proposition}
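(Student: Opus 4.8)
The plan is to prove Proposition~\ref{prop:coupling_uniform_discrete_rw} by exhibiting an explicit (near-optimal) coupling between the discrete measure $m_x^\Mcal$, which is uniform on the Poisson points inside $\BallM{x}{\delta_n}$, and the continuous uniform $\delta_n$-measure $\mu_x^{\delta_n}$, and then bounding the resulting transport cost in expectation. The first reduction is to transfer the problem to flat Euclidean space: since $\delta_n \to 0$, we work inside a fixed coordinate chart around $x$ via the exponential map $\exp_x : T_x\Mcal \to \Mcal$, which for large $n$ is a bi-Lipschitz diffeomorphism on $\BallN{0}{(1+\xi)^{-1}\delta_n}$ onto a neighborhood sandwiched between $\BallM{x}{\delta_n/(1+\xi)}$ and $\BallM{x}{(1+\xi)\delta_n}$, with Lipschitz constants $1+\xi$ and $(1+\xi)^{-1}$ for $\xi$ arbitrarily small. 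A coupling of cost $c$ in the Euclidean ball then pushes forward to a coupling of cost at most $(1+\xi)c$ on $\Mcal$; moreover the Jacobian of $\exp_x$ is $1 + \bigO{\delta_n^2}$, so pulling back the volume form distorts $\mu_x^{\delta_n}$ by a factor $1 + \bigO{\delta_n^2}$, and the pushforward of the Poisson process is still a Poisson process with intensity $n(1+\bigO{\delta_n^2})/\vol_\Mcal(\Mcal)$. Absorbing these multiplicative errors costs only an extra $\bigO{\delta_n^2}\cdot W_1 = \bigO{\delta_n^3}$ term, which is within budget, so it suffices to prove the analogous $\smallO{\delta_n^3}$ bound for a Poisson process on a Euclidean ball of radius $\Theta(\delta_n)$ matched against the uniform measure on that ball.

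In the Euclidean setting the strategy is the three-step construction sketched in the excerpt. First, tile $\BallN{0}{\delta_n}$ (or a slightly enlarged cube containing it) by a grid of small sub-cubes of side length $\ell_n$ with $\ell_n$ chosen so that each cube contains $\Theta(\text{polylog}(n))$ expected Poisson points — concretely $n\ell_n^N = \Theta(\log n)$ or a slightly larger power of $\log n$ — so that, by standard Poisson concentration (Chernoff bounds) together with a union bound over the $\bigO{(\delta_n/\ell_n)^N}$ cubes, with probability $1 - \smallO{\delta_n^3}$ every cube contains a number of points within a constant factor of its expectation. Second, invoke the minimal-matching results for Poisson processes versus grids on $[0,1]^N$ (Leighton--Shor, Talagrand, Shor~\cite{leighton1986tight,talagrand1994matching,shor1991minimax}), rescaled to our box, to match every Poisson point to a nearby grid point (or, more directly, to argue that the uniform measure on the box is within Wasserstein distance $\smallO{\delta_n^3}$ of the empirical measure of the Poisson points, after the $1+\smallO{1}$ normalization coming from the random total count $|\BallM{x}{\delta_n}\cap G_n|$). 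Third, transport: for a set $A$ distribute the mass $\mu_x^{\delta_n}(A)$ onto the Poisson points matched to grid cells meeting $A$; the per-unit transport distance is then at most the matching radius plus $\bigO{\ell_n}$. The total $W_1$ cost is bounded by the matching cost, and one checks that the Leighton--Shor bound (in dimension $N=2$, the matching radius is $\bigO{\sqrt{\log m / m}}$ for $m$ points, with polynomial corrections in higher $N$) rescaled to a box of side $\Theta(\delta_n)$ with $m = \Theta(n\delta_n^N)$ points yields a cost that is $\bigO{\delta_n \cdot (\log n/(n\delta_n^N))^{1/N}} = \bigO{\lambda_n} = \smallO{\delta_n^3}$ using exactly the hypothesis $\lambda_n = \smallO{\delta_n^3}$ built into $\delta_n$-good approximations and the standing assumptions of Theorem~\ref{thm:convergence_ollivier_graphs_general}. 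A separate small term handles the fact that $m_x^\Mcal$ is normalized by the random count $|\BallM{x}{\delta_n}\cap G_n|$ rather than by $n\vol(\BallM{x}{\delta_n})$: the relative fluctuation of a Poisson count with mean $\Theta(n\delta_n^N) \gg 1$ is $\bigOp{(n\delta_n^N)^{-1/2}}$, and moving this much mass across a set of diameter $\bigO{\delta_n}$ costs $\bigO{\delta_n (n\delta_n^N)^{-1/2}}$, again $\smallO{\delta_n^3}$ under the assumptions, and we take expectations using that this bound holds in $L^1$. Finally, off the good (concentration) event, which has probability $\smallO{\delta_n^3}$, the Wasserstein distance is trivially $\bigO{\delta_n}$ since both measures are supported in $\BallM{x}{\delta_n}$, contributing $\bigO{\delta_n}\cdot\smallO{\delta_n^3} = \smallO{\delta_n^4}$ to the expectation.

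The main obstacle — and the reason this proposition needs real work rather than a one-line argument — is getting the matching/discrepancy bound to be genuinely $\smallO{\delta_n^3}$ uniformly, which is delicate precisely because the box radius $\delta_n$ and the density parameter both depend on $n$ in a coupled way, and in dimension $N=2$ the Leighton--Shor bound carries that stubborn $\sqrt{\log}$ factor whose interaction with the polylog factors $\log(n)^a, \log(n)^b$ in $\varepsilon_n, \delta_n$ must be tracked carefully; this is exactly why the theorem hypotheses pin down the exponents (e.g. $b > 2/N$ at the boundary). A secondary subtlety is making the diffeomorphism transfer quantitative: one must control not just the bi-Lipschitz constants of $\exp_x$ but also that the pulled-back volume form and the pushed-forward Poisson intensity differ from the flat ones by $1 + \bigO{\delta_n^2}$ \emph{uniformly} over the relevant ball, which follows from smoothness and compactness of $\Mcal$ (bounded geometry), and then checking that these multiplicative distortions, when propagated through the Wasserstein estimate, only produce $\bigO{\delta_n^3}$ errors rather than $\bigO{\delta_n^2}\cdot\bigO{1}$ errors — this works because the unperturbed Wasserstein distance is itself $\bigO{\delta_n}$, so an $\bigO{\delta_n^2}$ relative distortion is $\bigO{\delta_n^3}$ absolutely. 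All remaining ingredients (Poisson Chernoff bounds, the reverse triangle inequality for $W_1$, pushforward of couplings under Lipschitz maps) are standard, so the write-up will spend most of its effort on the grid-matching estimate and on making the exponential-map transfer rigorous and quantitative.
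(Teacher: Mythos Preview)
Your proposal is correct and follows essentially the same route as the paper: reduce to Euclidean space via the exponential map (controlling the bi-Lipschitz constants and the $1+\bigO{\delta_n^2}$ Jacobian distortion, and using the Mapping Theorem to identify the pushed-forward Poisson process), then invoke the Leighton--Shor/Talagrand matching bounds for empirical versus uniform measures on cubes, rescaled to side $\Theta(\delta_n)$, to get a cost of order $\bigO{\log(n)\,n^{-1/N}} = \smallO{\delta_n^3}$ under the standing assumption $\lambda_n = \smallO{\delta_n^3}$. The only cosmetic difference is that the paper absorbs the random normalization $|\BallM{x}{\delta_n}\cap G_n|$ by conditioning on $|\Pcal|$ and summing over a concentration window (its Lemma on the cube with intensity $(1+f_n)n$), whereas you split it off as a separate additive $\bigO{\delta_n(n\delta_n^N)^{-1/2}}$ term; both give the same bound.
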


\subsection{Proof of the main results}

We now have all ingredients to prove the main results. We start with Theorem~\ref{thm:convergence_ollivier_graphs_general}, where we bound the expression inside the expectation as a sum of several terms and use the above results and the fact that $d_G$ is a $\delta_n$-good approximation to show that each individual term goes to zero.

\begin{proof}[Proof of Theorem~\ref{thm:convergence_ollivier_graphs_general}]
First, we bound the term inside the expectation as follows
\begin{align*}
	&\hspace{-30pt}\left|\frac{2(N + 2)\kappa(x^\ast, y_n^\ast;\, \mathcal{G}_n)}{d_G(x^\ast, y_n^\ast)^2} 
		-\Ric(\vec{v}, \vec{v})\right|\\
	&\le \left|\frac{2(N + 2)\kappa(x^\ast, y_n^\ast;\, \mathcal{G}_n)}{d_G(x^\ast, y_n^\ast)^2} 
		- \frac{2(N + 2)\kappa(x^\ast, y_n^\ast)}{\delta_n^2} \right|
		+ \left|\frac{2(N + 2)\kappa(x^\ast, y_n^\ast)}{\delta_n^2} -\Ric(\vec{v}, \vec{v})\right|.
\end{align*}
The last term is deterministic and goes to zero by Theorem~\ref{thm:convergence_curvature_manifolds}. For the first term we note that when $x^\ast$ and $y_n^\ast$ are not connected, $\kappa(x^\ast, y_n^\ast, \mathcal{G}_n) = 0$ and hence
\[
	\left|\frac{2(N + 2)\kappa(x^\ast, y_n^\ast;\, \mathcal{G}_n)}{d_G(x^\ast, y_n^\ast)^2} 
	- \frac{2(N + 2)\kappa(x^\ast, y_n^\ast)}{\delta_n^2} \right|
	\le \frac{2(2+N)}{\delta_n^{2}}.
\]
Conditioned on the good event $\Omega_n$, this does not happen by property $\Omega 3$ in Lemma~\ref{lem:good_event}, so that
\begin{align*}
	&\hspace{-30pt}\Exp{\left|\frac{2(N + 2)\kappa(x^\ast, y_n^\ast;\, \mathcal{G}_n)}{d_G(x^\ast, y_n^\ast)^2} 
		- \frac{2(N + 2)\kappa(x^\ast, y_n^\ast)}{\delta_n^2} \right|}\\
	&\le 2(N+2)\CExp{\left|\frac{\kappa(x^\ast, y_n^\ast;\, \mathcal{G}_n)}{d_G(x^\ast, y_n^\ast)^2} 
		- \frac{\kappa(x^\ast, y_n^\ast)}{\delta_n^2} \right|}{\Omega_n}
		+ \left(1 - \Prob{\Omega_n}\right) \frac{2(2+N)}{\delta_n^{2}}.	
\end{align*}

By construction of the good event we have $1- \Prob{\Omega_n} = \smallO{\delta_n^3}$ and thus, the last term in the above bound goes to zero.

For the other term we first note that since $d_G$ is a $\delta_n$-good approximation it follows that $\delta_n^2 = d_\Mcal(x^\ast,y_n^\ast)^2 \le d_G(x^\ast, y_n^\ast)^2$. Further, recall that
\[
	\kappa(x^\ast, y_n^\ast;\, \mathcal{G}_n) = 1 - \frac{W_1^G(m_{x^\ast}^G,m_{y_n^\ast}^G)}{d_G(x^\ast,y_n^\ast)}
	\quad \text{and} \quad 
	\kappa(x^\ast, y_n^\ast) = 1 - \frac{W_1(\mu_{x^\ast}^{\delta_n}, \mu_{y_n^\ast}^{\delta_n})}{\delta_n},
\] 
and that the absolute value of each curvature term can be bounded from above by $2$.

Then the expression inside the conditional expectation can be bounded as follows
\begin{align*}
	\left|\frac{\kappa(x^\ast, y_n^\ast;\, \mathcal{G}_n)}{d_G(x^\ast, y_n^\ast)^2} 
		- \frac{\kappa(x^\ast, y_n^\ast)}{\delta_n^2} \right|
	&\le \left|\kappa(x^\ast, y_n^\ast;\, \mathcal{G}_n) 
		\left(\frac{1}{d_G(x^\ast, y_n^\ast)^2} - \frac{1}{\delta_n^2}\right)\right|
		+ \frac{\left|\kappa(x^\ast, y_n^\ast;\, \mathcal{G}_n) - \kappa(x^\ast, y_n^\ast)\right|}{\delta_n^2}\\
	&\le 2 \frac{|\delta_n^2 - d_G(x^\ast, y_n^\ast)^2|}{\delta_n^4}
		+ \frac{1}{\delta_n^2}\left|\frac{W_1^G(m_{x^\ast}^G,m_{y_n^\ast}^G)}{d_G(x^\ast, y_n^\ast)}
		- \frac{W_1(\mu_{x^\ast}^{\delta_n}, \mu_{y_n^\ast}^{\delta_n})}{\delta_n}\right|\\
	&\le \frac{\left|\delta_n^2 - d_G(x^\ast, y_n^\ast)^2\right|}{\delta_n^4} 
		+ \frac{W_1^G(m_{x^\ast}^G,m_{y_n^\ast}^G)\left|\delta_n 
		- d_G(x^\ast, y_n^\ast)\right|}{\delta_n^4} \numberthis \label{eq:main_error_terms1}\\
	&\hspace{10pt}+ \frac{\left|W_1^G(m_{x^\ast}^G,m_{y_n^\ast}^G) - W_1(\mu_{x^\ast}^{\delta_n}, 
		\mu_{y_n^\ast}^{\delta_n})\right|}{\delta_n^3}. \numberthis \label{eq:main_error_terms2}
\end{align*}

Next, since $d_G$ is a $\delta_n$-good approximation, we can apply~\eqref{eq:def_distance_approximation} 
\[
	\left|\delta_n - d_G(x^\ast, y_n^\ast)\right| = \left|d_\Mcal(x^\ast,y_n^\ast) - d_G(x^\ast, y_n^\ast)\right|
	\le \delta_n \xi_n^2 + \xi_n^3 = \smallO{\delta_n^3}.
\]
Since $W_1^G(m_{x^\ast}^G,m_{y_n^\ast}^G) \le \delta_n$ it then follows that the second term in~\eqref{eq:main_error_terms1} goes to zero. For the first term we have
\begin{align*}
	\left|\delta_n^2 - d_G(x^\ast, y_n^\ast)^2\right| 
	&\le \left|\delta_n - d_G(x^\ast, y_n^\ast)\right|\left(\delta_n + d_G(x^\ast, y_n^\ast)\right)\\
	&\le \left(\delta_n\xi_n^2 + \xi_n^3\right) \left(\delta_n + \delta_n(1+\xi_n^2) + \xi_n^3\right)
		= \smallO{\delta_n^4}.
\end{align*}
which implies that this term also goes to zero.  

We are thus left with~\eqref{eq:main_error_terms2}, for which we have to show that
\[
	\CExp{\left|W_1^G(m_{x^\ast}^G,m_{y_n^\ast}^G) - W_1(\mu_{x^\ast}^{\delta_n}, \mu_{y_n^\ast}^{\delta_n})\right|}{\Omega_n} = \smallO{\delta_n^3}.
\]

We first replace $W_1(\mu_{x^\ast}^{\delta_n}, \mu_{y_n^\ast}^{\delta_n})$ with $\widetilde{W}_1(\mu_{x^\ast}^{\delta_n},\mu_{y_n^\ast}^{\delta_n})$ by invoking Proposition~\ref{prop:L1_error_adjusted_wasserstein_manifold}:
\[
	\CExp{\left|\widetilde{W}_1(\mu_{x^\ast}^{\delta_n},\mu_{y_n^\ast}^{\delta_n}) - W_1(\mu_{x^\ast}^{\delta_n},\mu_{y_n^\ast}^{\delta_n})\right|}{\Omega_n}
	 = \smallO{\delta_n^3}.
\]
This then implies
\[
	\CExp{\left|W_1^G(m_{x^\ast}^G,m_{y_n^\ast}^G) - W_1(\mu_{x^\ast}^{\delta_n}, \mu_{y_n^\ast}^{\delta_n})\right|}{\Omega_n}
	\le \CExp{\left|\widetilde{W}_1(m_{x^\ast}^G, m_{y_n^\ast}^G) 
	- \widetilde{W}_1(\mu_{x^\ast}^{\delta_n},\mu_{y_n^\ast}^{\delta_n})\right|}{\Omega_n}
	+ \smallO{\delta_n^3}.
\]

To show that the first term in the upper bound is also $\smallO{\delta_n^3}$ we apply the reverse triangle inequality twice to obtain
\begin{align*}
	\left|\widetilde{W}_1(m_{x^\ast}^G, m_{y_n^\ast}^G) - \widetilde{W}_1(\mu_{x^\ast}^{\delta_n},\mu_{y_n^\ast}^{\delta_n})\right|
	&\le \widetilde{W}_1(m_{x^\ast}^G, \mu_{x^\ast}^{\delta_n}) + \widetilde{W}_1(m_{y_n^\ast}^G, \mu_{y_n^\ast}^{\delta_n}).
\end{align*}

We proceed to show that $\widetilde{W}_1(m_{x^\ast}^G, \mu_{x^\ast}^{\delta_n}) = \smallO{\delta_n^3}$ holds in expectation on the event $\Omega_n$. The proof for $\widetilde{W}_1(m_{y_n^\ast}^G, \mu_{y_n^\ast}^{\delta_n})$ is similar.

Applying Proposition~\ref{prop:L1_error_adjusted_wasserstein_manifold} again we get
\[
	\CExp{\widetilde{W}_1(m_{x^\ast}^G, \mu_{x^\ast}^{\delta_n})}{\Omega_n} 
	\le \CExp{W_1(m_{x^\ast}^G, \mu_{x^\ast}^{\delta_n})}{\Omega_n} + \smallO{\delta_n^3}
	\le \bigO{1} \Exp{W_1(m_{x^\ast}^G, \mu_{x^\ast}^{\delta_n})} + \smallO{\delta_n^3}
\]
Since the expectation is $\smallO{\delta_n^3}$ by Proposition~\ref{prop:coupling_uniform_discrete_rw} we conclude that
\[
	\CExp{\left|W_1^G(m_{x^\ast}^G,m_{y_n^\ast}^G) - W_1(\mu_{x^\ast}^{\delta_n}, \mu_{y_n^\ast}^{\delta_n})\right|}{\Omega_n} = \smallO{\delta_n^3},
\]
which finishes the proof.
\end{proof}

Now that we have the general result, Theorem~\ref{thm:convergence_ollivier_almost_sparse_graphs_weighted} and Theorem~\ref{thm:convergence_ollivier_graphs_hopcount} directly follow from Theorem~\ref{thm:convergence_ollivier_graphs_general} if we can show that the graph distances that are considered there are $\delta_n$-good approximations. 

Throughout the remainder of this section we will assume that
\begin{align*}
	\varepsilon_n &= \bigT{\log(n)^a n^{-\alpha}}\\
	\delta_n &= \bigT{\log(n)^b n^{-\beta}},
\end{align*}
for some $a,b \in \R$ and $0 \le \alpha, \beta \le 1$. We shall also assume that $\varepsilon_n \le \delta_n$. The following results show that for appropriate choices of the constants $a.b$ and $\alpha,\beta$ both the weighted manifold and the rescaled hopcount distance are $\delta_n$-good approximations. The proofs are given in Section~\ref{sec:weighted_graph_distances} and Section~\ref{sec:almost_sparse_graphs}, respectively.

\begin{proposition}\label{prop:weighted_graph_distance_delta_approx}
Suppose the constants in $\varepsilon_n$ and $\delta_n$ satisfy
\[
	0 < \beta \le \alpha, \quad \alpha + 2\beta \le \frac{1}{N}
\]
with $a \le b$ if $\alpha = \beta$ and $a+2b > \frac{2}{N}$ if $\alpha + 2\beta = \frac{1}{N}$.
Let $y_n^\ast \in \Mcal$ be at distance $\delta_n$ in the direction of $\vec{v}$ and $G_n = \Gbb_n(x^\ast, y_n^\ast, \varepsilon_n)$ be rooted random graphs on $\Mcal$. Then the manifold-weighted graph distance $d_G^w$ on $G_n$ is a $\delta_n$-good approximation of $d_\Mcal$.
\end{proposition}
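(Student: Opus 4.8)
The plan is to verify the two parts of Definition~\ref{def:delta_good_approx} separately. The inequality $d_\Mcal \le d_G^w$ is immediate: every edge $(z,z')$ of $G_n$ carries the weight $d_\Mcal(z,z')$, so the weighted length of any path in $G_n$ is at least the manifold distance between its endpoints by the triangle inequality for $d_\Mcal$, and taking the infimum over paths gives $d_G^w \ge d_\Mcal$. For the remaining requirement I would fix $Q=4$ and produce a sequence $\xi_n = \smallO{\delta_n}$ and an event of probability $1-\smallO{\delta_n^3}$ on which $d_G^w(u,v)-d_\Mcal(u,v)\le d_\Mcal(u,v)\xi_n^2$ holds for all $u,v\in\BallM{x^\ast}{Q\delta_n}\cap G_n$ simultaneously. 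The construction approximates the minimizing geodesic from $u$ to $v$ by a broken geodesic through Poisson points: I would set $\xi_n=\delta_n/\log\log n$, a step length $s_n=\varepsilon_n/2$, and a covering radius $r_n=\varepsilon_n\xi_n^2/4$, chosen so that $s_n+2r_n\le\varepsilon_n$ and $2r_n/s_n=\xi_n^2$.

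Next I would introduce the good event $E_n$ that every point of $W_n:=\BallM{x^\ast}{2Q\delta_n}$ lies within manifold distance $r_n$ of a point of $\Pcal_n$, and bound $\Prob{E_n^c}$ by covering $W_n$ with a maximal $(r_n/2)$-separated net of cardinality $M_n=\bigO{(\delta_n/r_n)^N}$, using $\vol_\Mcal(\BallM{q}{r_n/2})\ge c\,r_n^N$ for large $n$ and a union bound, so that $\Prob{E_n^c}\le M_n\exp(-c'n r_n^N)$. Here $n r_n^N=\bigT{\log(n)^{(a+2b)N}\,n^{1-(\alpha+2\beta)N}(\log\log n)^{-2N}}$ and $M_n$ is polynomial in $n$; the hypotheses $\alpha+2\beta\le 1/N$, together with $a+2b>2/N$ in the boundary case $\alpha+2\beta=1/N$, are exactly what make $n r_n^N$ grow either polynomially in $n$ or at least like $\log(n)^{2+\eta}$ for some $\eta>0$. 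In either case $\Prob{E_n^c}$ is super-polynomially small, hence $\smallO{\delta_n^3}$.

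On $E_n$ I would establish the distance bound deterministically. For $n$ large, $\BallM{x^\ast}{Q\delta_n}$ is a convex ball, so any $u,v\in\BallM{x^\ast}{Q\delta_n}$ are joined by a unique unit-speed minimizing geodesic $\gamma:[0,L]\to\Mcal$ with $L=d_\Mcal(u,v)$ that stays inside $\BallM{x^\ast}{Q\delta_n}$. If $L\le\varepsilon_n$ then $uv$ is an edge and $d_G^w(u,v)=d_\Mcal(u,v)$; otherwise let $k=\lfloor L/s_n\rfloor$, put $p_i=\gamma(i s_n)$ for $0\le i\le k$, $p_{k+1}=v$, $x_0=u$, $x_{k+1}=v$, and use $E_n$ to pick $x_i\in\Pcal_n$ with $d_\Mcal(x_i,p_i)\le r_n$ for $1\le i\le k$. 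Since $d_\Mcal(x_i,x_{i+1})\le d_\Mcal(x_i,p_i)+d_\Mcal(p_i,p_{i+1})+d_\Mcal(p_{i+1},x_{i+1})\le s_n+2r_n\le\varepsilon_n$, the sequence $x_0x_1\cdots x_{k+1}$ is a path in $G_n$, and telescoping $\sum_i d_\Mcal(p_i,p_{i+1})=L$ together with $d_\Mcal(x_i,p_i)\le r_n$ and $k\le L/s_n$ yields
\[
	d_G^w(u,v)\le\sum_{i=0}^{k}d_\Mcal(x_i,x_{i+1})\le L+2k r_n\le L\left(1+\frac{2r_n}{s_n}\right)=d_\Mcal(u,v)\left(1+\xi_n^2\right),
\]
so $0\le d_G^w(u,v)-d_\Mcal(u,v)\le d_\Mcal(u,v)\xi_n^2\le d_\Mcal(u,v)\xi_n^2+\xi_n^3$ for all such $u,v$ on $E_n$; this is precisely the property in Definition~\ref{def:delta_good_approx} for the chosen $Q$ and $\xi_n$.

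The main obstacle I anticipate is the probabilistic covering estimate of the second step: one must force the union bound over the $(r_n/2)$-net to beat $\delta_n^3$, and it is here that the density condition $\alpha+2\beta\le 1/N$ and the logarithmic correction $a+2b>2/N$ at the boundary get consumed — away from the boundary the estimate carries a polynomial margin, while at the boundary it is delicate and dictates the stated hypotheses. The Riemannian input that enters (positivity of the convexity radius, so that short metric balls are convex and short minimizing geodesics are unique and contained in them, and comparison of a small ball's volume with the Euclidean one) is classical and matters only through the limit $n\to\infty$; the extra root $y_n^\ast$ in $\Gbb_n(x^\ast,y_n^\ast,\varepsilon_n)$ is harmless, since it merely adds one deterministic node that may serve as an endpoint $u$ or $v$.
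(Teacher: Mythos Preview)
Your argument is correct and follows essentially the same route as the paper: cover the relevant region by small balls, use a union bound to guarantee each ball contains a Poisson point with probability $1-\smallO{\delta_n^3}$, and on that event approximate the geodesic from $u$ to $v$ by a graph path through nearby Poisson points, giving $d_G^w(u,v)\le d_\Mcal(u,v)(1+\xi_n^2)$. The paper's version (Lemma~\ref{lem:adjusted_manifold_distance}) differs only in parametrization: it fixes the covering radius at $\lambda_n/4$ with $\lambda_n=\log(n)^{2/N}n^{-1/N}$ (so that $n\lambda_n^N=\log(n)^2$ regardless of $\varepsilon_n,\delta_n$), covers the whole compact manifold rather than just $\BallM{x^\ast}{2Q\delta_n}$, obtains the error bound $d_\Mcal(u,v)\cdot 3\lambda_n/\varepsilon_n + 2\lambda_n$, and only then reads off $\xi_n=\max\{\sqrt{\lambda_n/\varepsilon_n},\lambda_n^{1/3}\}$; you instead fix $\xi_n=\delta_n/\log\log n$ first and back out the covering radius $r_n=\varepsilon_n\xi_n^2/4$. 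Both choices consume the hypothesis $\alpha+2\beta\le 1/N$ (with the logarithmic correction at the boundary) in exactly the same place---making the per-ball vacancy probability super-polynomially small---so the two proofs are really the same argument with different bookkeeping.
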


\begin{proposition}\label{prop:shortest_graph_distance_delta_approx}
Suppose the constants in $\varepsilon_n$ and $\delta_n$ satisfy
\[
	0 < \beta \le 1/9 \quad \text{and} \quad 3\beta \le \alpha \le \frac{1-3\beta}{2},
\]
and $a < 3b$ if $\alpha = 3\beta$ and $2a + 3b > 1$ if $\alpha = \frac{1-3\beta}{2}$. 
Let $y_n^\ast \in \Mcal$ be at distance $\delta_n$ in the direction of $\vec{v}$. Let $G_n = \Gbb_n(x^\ast, y_n^\ast, \varepsilon_n)$ be rooted random graphs on a $2$-dimensional Riemannian manifold $\Mcal$ and denote by $d_G^s$ the shortest path distance. Then the $\varepsilon_n$-weighted graph distance $d_G^\ast := \varepsilon_n d_G^s$ on $G_n$ is a $\delta_n$-good approximation of $d_\Mcal$.
\end{proposition}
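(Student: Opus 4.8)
The plan is to verify, for the rescaled hop-count distance $d_G^\ast = \varepsilon_n d_G^s$, the two requirements of Definition~\ref{def:delta_good_approx} that make it a $\delta_n$-good approximation of $d_\Mcal$. The inequality $d_\Mcal \le d_G^\ast$ is immediate: every edge of $G_n$ joins two points at $d_\Mcal$-distance at most $\varepsilon_n$, so a $k$-hop path joins points at distance at most $k\varepsilon_n$. Everything else is the matching upper bound, i.e.\ producing a $Q > 3$ and a $\xi_n = \smallO{\delta_n}$ so that, with probability $1 - \smallO{\delta_n^3}$, the estimate $\varepsilon_n d_G^s(u,v) \le d_\Mcal(u,v)(1 + \xi_n^2) + \xi_n^3$ holds simultaneously for all $u,v \in \BallM{x^\ast}{Q\delta_n}\cap G_n$. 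The only external ingredient is a quantitative form of the planar stretch estimate of~\cite{diaz2016relation}: for a random geometric graph in a planar box with connection radius $r$ and density $\theta = \bigT{nr^2}$, with probability $1 - n^{-\omega(1)}$ every pair of vertices at Euclidean distance $D$ with $D/r$ large enough satisfies $d_G^s \le (D/r)\bigl(1 + \psi(\theta)\bigr)$ for an explicit $\psi$ with $\psi(\theta) \to 0$.

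The first step is a reduction to Euclidean space, but deliberately \emph{not} on all of $\BallM{x^\ast}{Q\delta_n}$: any chart on a ball of radius $\asymp\delta_n$ distorts $d_\Mcal$ by a relative factor $1 + \bigT{\delta_n^2}$, which is not $\smallO{\delta_n^2}$ and hence cannot be hidden inside $\xi_n^2$ when $\xi_n = \smallO{\delta_n}$. So I would localise: pick an intermediate scale $\ell_n$ with $\varepsilon_n\delta_n^{-2} \ll \ell_n \ll \delta_n$, which is possible \emph{precisely because} the hypotheses force $\varepsilon_n = \smallO{\delta_n^3}$ (the borderline case $\alpha = 3\beta$ being covered by $a < 3b$). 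Cover the (geodesically convex, for large $n$) ball by $\bigT{(\delta_n/\ell_n)^2}$ cells of diameter $\asymp\ell_n$; on any pair of adjacent cells, normal coordinates centred at the midpoint render $d_\Mcal$ Euclidean up to relative error $\bigO{\ell_n^2} = \smallO{\delta_n^2}$ and present $G_n$ as sandwiched between two Euclidean random geometric graphs with radii $\varepsilon_n(1 \pm \bigO{\ell_n^2})$ on $\bigT{n\ell_n^2}$ Poisson points of near-constant intensity.

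Next I would route. For $u,v$ as above, follow the minimizing geodesic $\gamma$ from $u$ to $v$ (which stays in the ball), place waypoints on it at arc-spacing $\asymp\ell_n$, and replace each waypoint by a graph vertex lying in the cell that contains it, keeping $u,v$ as the endpoints. A union bound over the $\mathrm{poly}(n)$ cells makes every cell nonempty with a regular point count with failure probability $n^{-\omega(1)}$ (here $n\ell_n^2 \gg n\lambda_n^2 = \log(n)^2$, since $\ell_n \gg \varepsilon_n \gg \lambda_n$). Consecutive chosen vertices lie in one adjacent-cell pair at $d_\Mcal$-distance $\asymp\ell_n \gg \varepsilon_n$, so the stretch estimate bounds their hop distance by $\varepsilon_n^{-1}$ times their $d_\Mcal$-distance times $(1 + \eta_n)$ with $\eta_n = \psi(\bigT{n\varepsilon_n^2}) + \bigO{\ell_n^2}$ (the scale-invariant local density being $n\ell_n^2\cdot(\varepsilon_n/\ell_n)^2 = n\varepsilon_n^2$). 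Summing over the $\bigT{d_\Mcal(u,v)/\ell_n}$ segments, using that the chosen vertices stay within $\smallO{\ell_n}$ of $\gamma$ so that their consecutive distances sum to $d_\Mcal(u,v)(1 + \smallO{\delta_n^2})$, and charging $\bigO{1}$ per segment for rounding hop counts, gives $\varepsilon_n d_G^s(u,v) \le d_\Mcal(u,v)\bigl(1 + \eta_n + \smallO{\delta_n^2}\bigr) + \bigO{\varepsilon_n\delta_n/\ell_n + \varepsilon_n}$. The additive term is $\smallO{\delta_n^3}$ by the choice of $\ell_n$ and $\varepsilon_n = \smallO{\delta_n^3}$, so the whole bound has the shape $d_\Mcal(u,v)(1 + \xi_n^2) + \xi_n^3$ for a suitable $\xi_n = \smallO{\delta_n}$ \emph{as soon as} $\eta_n = \smallO{\delta_n^2}$.

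The hard part is exactly this last point, $\psi(\bigT{n\varepsilon_n^2}) = \smallO{\delta_n^2}$. With $n\varepsilon_n^2 = \bigT{\log(n)^{2a}n^{1-2\alpha}}$ and $\delta_n^2 = \bigT{\log(n)^{2b}n^{-2\beta}}$, the range $3\beta \le \alpha \le \frac{1-3\beta}{2}$ (nonempty exactly when $\beta \le 1/9$) is what matches the polynomial orders, and the side conditions $a < 3b$ at $\alpha = 3\beta$ and $2a + 3b > 1$ at $\alpha = \frac{1-3\beta}{2}$ absorb the logarithmic factors in the two borderline regimes; the precise thresholds $1/9$, $3\beta$, etc.\ are dictated by the exact rate $\psi$ coming out of~\cite{diaz2016relation}. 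Besides this bookkeeping, the points needing care are extracting a clean enough quantitative $\psi$ from~\cite{diaz2016relation} and making the routing uniform over all pairs through a single net of cells, so that the union of all bad events (empty or irregular cells; failure of the stretch bound on the $\mathrm{poly}(n)$ relevant sub-domains) keeps probability $n^{-\omega(1)} = \smallO{\delta_n^3}$.
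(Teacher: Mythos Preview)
Your approach is genuinely different from the paper's and more careful in one respect. The paper simply maps the whole ball $\BallM{x^\ast}{Q\delta_n}$ to $\R^2$ by a single chart, invokes the stretch bound of~\cite{diaz2016relation} (stated as Theorem~\ref{thm:graph_distance_vs_euclidean}) for every pair, takes a union bound (Lemma~\ref{lem:shortest_path_distance}), and then checks that each of the three explicit terms in $\gamma_n$ is $\smallO{\delta_n^2}$ while $\varepsilon_n=\smallO{\delta_n^3}$. Your worry that a single chart at scale $\delta_n$ distorts $d_\Mcal$ by a relative factor $1+\Theta(\delta_n^2)$, which is \emph{not} $\smallO{\delta_n^2}$, is well taken; the paper brushes this aside by saying the chart ``affects the distances at most by a constant factor'', which is too coarse for what is actually needed. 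Your mesoscale localisation to cells of diameter $\ell_n\ll\delta_n$ is designed precisely to fix this, and that part of the plan is sound. A minor caution: the per-pair failure probability in~\cite{diaz2016relation} is $\smallO{n^{-5/2}}$, not $n^{-\omega(1)}$; the union bound still closes for suitable $\ell_n$, but you should track it rather than assume it away.

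There is, however, a real gap in your routing step. Replacing each waypoint by a vertex \emph{in the $\ell_n$-cell that contains it} only places that vertex within $\bigO{\ell_n}$ of $\gamma$, not $\smallO{\ell_n}$ as you assert, and even $\smallO{\ell_n}$ would not yield $\sum_i d_\Mcal(x_i,x_{i+1})=d_\Mcal(u,v)(1+\smallO{\delta_n^2})$: with $k\asymp\delta_n/\ell_n$ segments and per-segment detour $\bigO{r}$ (triangle inequality, $r$ the vertex--waypoint distance), the excess $\sum_i d_\Mcal(x_i,x_{i+1})-d_\Mcal(u,v)$ is $\bigO{kr}$, which is $\bigO{\delta_n}$ when $r\asymp\ell_n$. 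That swamps the target $\smallO{\delta_n^3}$ and the bound collapses. The fix is to decouple the two roles of the cells: keep $\ell_n$-cells for the local charts and the stretch estimate, but select the waypoint-replacing vertices from much smaller balls of radius $\asymp\lambda_n$ (nonempty with probability $1-\smallO{\delta_n^3}$ by the same covering argument as Corollary~\ref{cor:covering}). Then the excess is $\bigO{k\lambda_n}=\bigO{\delta_n\lambda_n/\ell_n}$, which is $\smallO{\delta_n^3}$ because $\lambda_n=\smallO{\varepsilon_n}$ and you already impose $\ell_n\gg\varepsilon_n/\delta_n^2$. With this repair the rest of your outline goes through.
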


Observe that the conditions of the constants in Proposition~\ref{prop:weighted_graph_distance_delta_approx} and Proposition~\ref{prop:shortest_graph_distance_delta_approx} are exactly the same as in Theorem~\ref{thm:convergence_ollivier_almost_sparse_graphs_weighted} and Theorem~\ref{thm:convergence_ollivier_graphs_hopcount}, respectively. Moreover, these conditions imply that $\lambda_n = \smallO{\varepsilon_n}$ and $\lambda_n = \smallO{\delta_n^3}$, with $\lambda_n$ as defined in~\eqref{eq:def_lambda_n}, as we will now demonstrate. 

In Proposition~\ref{prop:weighted_graph_distance_delta_approx} we have $\beta > 0$ and $\alpha + 2\beta \le \frac{1}{N}$. It then follows that $\alpha < \frac{1}{N}$ which implies $\lambda_n = \smallO{\varepsilon_n}$. When the inequality $3\beta \le \alpha + 2\beta \le \frac{1}{N}$ is strict we have that $\lambda_n = \smallO{\delta_n^3}$. When $3\beta = \frac{1}{N}$ it must be that $\alpha + 2\beta = \frac{1}{N}$ and hence the conditions of Proposition~\ref{prop:weighted_graph_distance_delta_approx} imply that $3b \ge a + 2b > \frac{2}{N}$. From this we deduce that $\lambda_n/\delta_n^3 = \bigT{\log(n)^{\frac{2}{N} - a - 2b}} = \smallO{1}$.

In Proposition~\ref{prop:shortest_graph_distance_delta_approx}, since $N = 2$, the conditions $\lambda_n = \smallO{\varepsilon_n}$ and $\lambda_n = \smallO{\delta_n^3}$ follow if $\alpha < \frac{1}{2}$ and $3\beta < \frac{1}{2}$. The first inequality holds since $\beta > 0$ and $\alpha \le \frac{1 - 3\beta}{2}$, while the second is due to the fact that $3\beta \le \frac{3}{9} = \frac{1}{3}$.

We thus conclude that under the conditions in both propositions, the radii satisfy the conditions of Theorem~\ref{thm:convergence_ollivier_graphs_general}. Hence, Theorem~\ref{thm:convergence_ollivier_almost_sparse_graphs_weighted} and Theorem~\ref{thm:convergence_ollivier_graphs_hopcount} follow from it.

\section{Proofs}\label{sec:proofs}

Here we prove all the intermediate results that we used to prove our main results in the previous section. We start with the proof of Lemma~\ref{lem:good_event} and Proposition~\ref{prop:L1_error_adjusted_wasserstein_manifold} in the next Section~\ref{ssec:extending_graph_distance}. In Section~\ref{ssec:proof_probability_measures_graphs} we provide the details for Proposition~\ref{prop:coupling_graph_discrete_rw}, while the proof of Proposition~\ref{prop:coupling_uniform_discrete_rw} is given in Section~\ref{ssec:proof_coupling_discrete_continuous_measures}. We end with Sections~\ref{sec:weighted_graph_distances} and~\ref{sec:almost_sparse_graphs} where we prove Propositions~\ref{prop:weighted_graph_distance_delta_approx} and~\ref{prop:shortest_graph_distance_delta_approx}, respectively, leading to the main results of this paper.

Recall that
\[
	\lambda_n = \log(n)^{\frac{2}{N}} n^{-\frac{1}{N}},
\]
and $\varepsilon_n \le \delta_n \to 0$ are such that $\lambda_n = \smallO{\varepsilon_n}$ and $\lambda_n = \smallO{\delta_n^3}$.

\subsection{Extended graph distance}\label{ssec:extending_graph_distance}

Our first goal is to proof Lemma~\ref{lem:good_event}. We start by showing that for sufficiently small radius $r_n$ and any finite set of points $u \in \Mcal$, the balls $\BallM{u}{r_n}$ will each contain at least one node from the rooted graphs $G_n = \mathbb{G}_n(x^\ast, y_n^\ast, \varepsilon_n)$.

\begin{lemma}\label{lem:non_empty_varepsilon_balls}
Let $U \subset \Mcal$ be a finite set of points in $\Mcal$ such that $|U| = \bigO{n^c}$, for some $c > 0$, and let $r_n = \bigT{\lambda_n}$. Then, for $G_n = \mathbb{G}_n(\varepsilon_n)$,
\[
	\Prob{\bigcup_{u \in U} \{|\BallM{u}{r_n} \cap G_n| = 0\}} = \smallO{\delta_n^3},
\]
as $n \to \infty$.
\end{lemma}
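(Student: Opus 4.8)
The plan is to reduce the statement to a union bound over the $\bigO{n^c}$ centres in $U$, exploiting that the number of Poisson points inside a small geodesic ball of radius $\bigT{\lambda_n}$ is a Poisson random variable whose mean is of order $\log(n)^2$, uniformly over $\Mcal$. The key geometric input, which is where compactness and smoothness of $\Mcal$ are used, is that there exist constants $r_0 > 0$ and $c_1 > 0$ depending only on $\Mcal$ such that $\vol_\Mcal(\BallM{u}{r}) \ge c_1 r^N$ for every $u \in \Mcal$ and every $0 < r \le r_0$. This is standard: the volume of a geodesic ball of radius $r$ about $u$ equals $\omega_N r^N (1 + \bigO{r^2})$, with $\omega_N$ the volume of the Euclidean unit ball and the error uniform in $u$ by compactness; alternatively one may invoke a Bishop--Gromov comparison using the (finite) infimum of the Ricci curvature over $\Mcal$.

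Given this, since $r_n = \bigT{\lambda_n}$ and $\lambda_n \to 0$, for all large $n$ there is a constant $c_2 > 0$ with $c_2 \lambda_n \le r_n \le r_0$, so that, recalling $\lambda_n^N = \log(n)^2 / n$,
\[
	\vol_\Mcal\!\left(\BallM{u}{r_n}\right) \;\ge\; c_1 c_2^N \lambda_n^N \;=\; \frac{c_1 c_2^N \log(n)^2}{n}
	\qquad \text{for all } u \in \Mcal .
\]
Because $\Pcal_n$ has intensity measure $\frac{n}{\vol_\Mcal(\Mcal)}\,\dd\vol_\Mcal$, the count $\left|\BallM{u}{r_n} \cap \Pcal_n\right|$ is Poisson with mean at least $c_0 \log(n)^2$, where $c_0 := c_1 c_2^N/\vol_\Mcal(\Mcal) > 0$. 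The node set of $G_n = \Gbb_n(\varepsilon_n)$ is exactly $\Pcal_n$ (and adjoining any fixed roots can only enlarge it, hence only lower the probability of an empty ball), so the void probability of a Poisson variable gives
\[
	\Prob{\left|\BallM{u}{r_n} \cap G_n\right| = 0} \;\le\; e^{-c_0 \log(n)^2}
	\qquad \text{for all } u \in \Mcal .
\]

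A union bound over $U$ then yields
\[
	\Prob{\bigcup_{u \in U} \left\{\left|\BallM{u}{r_n} \cap G_n\right| = 0\right\}}
	\;\le\; |U|\, e^{-c_0 \log(n)^2}
	\;=\; \bigO{\exp\!\left(c \log n - c_0 \log(n)^2\right)} ,
\]
which decays faster than any power of $n$. In particular it is $\smallO{\lambda_n}$, because $\lambda_n = \exp\!\left(-\tfrac{1}{N}\log n + \tfrac{2}{N}\log\log n\right)$ is only exponentially small in $\log n$ while the bound is exponentially small in $\log(n)^2$; since the standing assumption of this section gives $\lambda_n = \smallO{\delta_n^3}$, the probability is $\smallO{\delta_n^3}$, as required. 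I do not expect a real obstacle here: the only point needing care is the uniform-in-$u$ volume lower bound for small geodesic balls, and once that is in place the remainder is just the Poisson void probability together with a crude union bound, with enormous room to spare.
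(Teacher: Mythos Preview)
Your proof is correct and follows essentially the same route as the paper: a uniform volume lower bound $\vol_\Mcal(\BallM{u}{r_n}) = \bigT{\lambda_n^N}$ for small geodesic balls, the Poisson void probability $e^{-\bigT{\log(n)^2}}$, a union bound over $|U| = \bigO{n^c}$ centres, and the observation that $e^{-\bigT{\log(n)^2} + c\log n} = \smallO{\lambda_n} = \smallO{\delta_n^3}$. You are slightly more explicit than the paper in justifying the uniformity in $u$ of the volume estimate (via compactness or Bishop--Gromov), but the argument is otherwise the same.
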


\begin{proof}
First note that for $r_n$ small enough the ball $\BallM{u}{r_n}$ can be mapped diffeomorphically onto the tangent space $T_u \Mcal$ at $u$. In particular, for small enough $r_n$ we have that, as $n \to \infty$, $\vol_\Mcal\left(\BallM{u}{r_n}\right) = \bigT{r_n^N} = \bigT{\lambda_n^N}$. Next, since the nodes in $G_n$ are placed according to a Poisson process with intensity $n/\vol_\Mcal(\Mcal)$ it follows that
\[
	\Prob{|\BallM{u}{r_n} \cap G_n| = 0} = e^{-\frac{n \vol_\Mcal\left(\BallM{u}{\lambda_n}\right)}{\vol_\Mcal(\Mcal)}}
	= e^{-n \bigT{\lambda_n^N}} = e^{-\bigT{\log(n)^{2}}}.
\]
Therefore, by applying the union bound we get
\begin{align*}
	\Prob{\bigcup_{u \in U} \{|\BallM{u}{\varepsilon_n} \cap G| = 0\}}
	&\le |U| \, \Prob{|\BallM{u}{\varepsilon_n} \cap G| = 0}\\
	&= e^{-\bigT{\log(n)^{2}} + \log(|U|)}
		\le e^{-\bigT{\log(n)^{2}} + c \log(n)}.
\end{align*}
To finish the proof we note that $e^{-\bigT{\log(n)^{2}} + c \log(n)} = \smallO{\lambda_n}$ which by assumption is $\smallO{\delta_n^3}$.
\end{proof}

With this lemma we obtain the following corollary.

\begin{corollary}\label{cor:covering}
There exists a collection $\{B_1, \dots, B_{m}\}$ of $m = \bigT{\lambda_n^{-N}}$ balls of radius $\lambda_n/4$ that cover $\Mcal$, such that if we denote by $c_1, \dots, c_m$ their centers and define the event
\begin{equation}\label{eq:def_cover_event}
	C_n = \bigcap_{t = 1}^m \left\{\left|\BallM{c_t}{\lambda_n/4} \cap G_n\right| \ne 0\right\},
\end{equation}
then
\[
	\Prob{C_n} = 1 - \smallO{\delta_n^3}.
\]
\end{corollary}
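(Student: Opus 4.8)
The plan is to build the required covering of $\Mcal$ explicitly and then apply Lemma~\ref{lem:non_empty_varepsilon_balls} to control the probability that no ball in the cover is empty. First I would use compactness of $\Mcal$: since $\Mcal$ is compact, for each $n$ there is a finite set of points $c_1,\dots,c_m \in \Mcal$ such that the balls $\BallM{c_t}{\lambda_n/4}$ cover $\Mcal$. The nontrivial point is that the number $m$ of balls needed can be taken to be $\bigT{\lambda_n^{-N}}$. For this I would invoke a standard packing/covering argument for compact Riemannian manifolds: take a maximal $\lambda_n/4$-separated set $\{c_1,\dots,c_m\}$ in $\Mcal$ (which exists and is finite by compactness); maximality forces the balls $\BallM{c_t}{\lambda_n/4}$ to cover $\Mcal$, while $\lambda_n/4$-separation means the balls $\BallM{c_t}{\lambda_n/8}$ are pairwise disjoint. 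Since $\lambda_n \to 0$ and $\Mcal$ is smooth and compact, for large $n$ each such ball has volume $\bigT{\lambda_n^N}$ uniformly in the center (the volume comparison being uniform because $\Mcal$ is compact, hence has bounded geometry), so disjointness plus $\sum_t \vol_\Mcal(\BallM{c_t}{\lambda_n/8}) \le \vol_\Mcal(\Mcal)$ gives $m = \bigO{\lambda_n^{-N}}$, and covering plus the same volume estimate gives $m = \bigOmega{\lambda_n^{-N}}$, hence $m = \bigT{\lambda_n^{-N}}$.

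Next I would apply Lemma~\ref{lem:non_empty_varepsilon_balls} with the finite set $U = \{c_1,\dots,c_m\}$ and radius $r_n = \lambda_n/4 = \bigT{\lambda_n}$. To use the lemma I need $|U| = m = \bigO{n^c}$ for some $c > 0$; this holds since $m = \bigT{\lambda_n^{-N}} = \bigT{\log(n)^{-2} n}$, which is certainly $\bigO{n}$. The lemma then yields
\[
	\Prob{\bigcup_{t=1}^m \left\{\left|\BallM{c_t}{\lambda_n/4} \cap G_n\right| = 0\right\}} = \smallO{\delta_n^3},
\]
and taking complements gives $\Prob{C_n} = 1 - \smallO{\delta_n^3}$, which is exactly the claim. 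A small technical caveat to address: the covering $\{B_1,\dots,B_m\}$ genuinely depends on $n$ (through $\lambda_n$), but this is harmless since the statement and its use only require, for each fixed $n$, the existence of such a cover with the stated probability bound; the implied constants in $m = \bigT{\lambda_n^{-N}}$ and in the $\smallO{\delta_n^3}$ bound are uniform in $n$ by compactness.

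The main obstacle, though it is a mild one, is the uniformity of the volume estimate $\vol_\Mcal(\BallM{c}{r}) = \bigT{r^N}$ over all centers $c \in \Mcal$ simultaneously as $r = \Theta(\lambda_n) \to 0$. On a general complete manifold this could fail, but compactness (equivalently bounded geometry: injectivity radius bounded below, sectional curvature bounded) makes it routine — one can cover $\Mcal$ by finitely many normal coordinate charts on which the metric is comparable to the Euclidean one with uniform constants, so that geodesic balls of small radius have volume comparable to Euclidean balls with constants independent of the center. This is also implicitly what Lemma~\ref{lem:non_empty_varepsilon_balls} relies on, so nothing new is really needed here; the corollary is essentially a packaging of that lemma together with a standard covering estimate.
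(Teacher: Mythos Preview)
Your proposal is correct and follows essentially the same approach as the paper: take a maximal $\lambda_n/4$-separated set (equivalently, a maximal family of disjoint balls of radius $\lambda_n/8$), use volume comparison on the compact manifold to get $m = \bigT{\lambda_n^{-N}} = \bigT{\log(n)^{-2} n} = \bigO{n}$, and then apply Lemma~\ref{lem:non_empty_varepsilon_balls} with $U = \{c_1,\dots,c_m\}$ and $r_n = \lambda_n/4$. Your additional remarks on the uniformity of the volume estimate via bounded geometry are accurate and make explicit what the paper leaves implicit.
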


\begin{proof}
The collection is constructed using the standard trick of taking a maximal set of \emph{disjoint} balls of radius $\lambda_n/8$ in $\Mcal$. Denote their centers by $c_1, \dots, c_m$. Simple volume comparison, and the compactness of $\Mcal$, gives $m = \bigO{\lambda_n^{-N}}$. By construction, the balls $B_i =  \BallM{c_i}{\lambda_n/4}$ then cover $\Mcal$, and hence $m= \bigT{\lambda_n^{-N}} = \bigT{\log(n)^{-2} n} = \bigO{n}$. The result then follows from Lemma~\ref{lem:non_empty_varepsilon_balls}.
\end{proof}

The event $C_n$ will play a crucial part in defining the \emph{good event} $\Omega_n$. Let $D_n$ denote the event on which~\eqref{eq:def_distance_approximation} holds. Then we define the \emph{good event} as 
\begin{equation}\label{eq:def_good_event}
	\Omega_n = C_n \cap D_n.
\end{equation} 

On this event, with sufficiently high probability, $(\BallM{x^\ast}{Q\delta_n}, \widetilde{d}_\Mcal)$ is a metric space for any constant $Q > 0$ and the extended distance $\widetilde{d}_\Mcal$ is a good approximation of the original distance $d_\Mcal$. Note that we do not need to consider the whole manifold since curvature is a local property.

\begin{lemma}\label{lem:extended_distance_metric_space}
Let $\Omega_n$ be the event defined in~\eqref{eq:def_good_event} and $Q > 3$ the constant from Definition~\ref{def:delta_good_approx}. Then on the event $\Omega_n$, 
\begin{enumerate}
\item each pair of points $u,v \in \BallM{x^\ast}{Q \delta_n}$ is connected by a path in the extended graph $G_n(u,v)$ and
\item $(\BallM{x^\ast}{Q\delta_n}, \widetilde{d}_\Mcal)$ is a metric space.
\end{enumerate}
\end{lemma}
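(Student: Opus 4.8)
\emph{Overview of the plan.} I would prove item (1) first by an explicit construction that uses only the covering event $C_n$ of Corollary~\ref{cor:covering}, and then deduce item (2) by showing that $\widetilde d_\Mcal$ is, on the good event, the shortest‑path metric of a single weighted graph, so that all the metric axioms become automatic. Throughout I use that $\lambda_n = \smallO{\varepsilon_n}$ and that, on $\Omega_n = C_n \cap D_n$, every covering ball of radius $\lambda_n/4$ contains a node of $G_n$ and the $\delta_n$‑good approximation bound \eqref{eq:def_distance_approximation} holds.

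\emph{Item (1): connectivity.} Fix $u,v \in \BallM{x^\ast}{Q\delta_n}$. Since $\Mcal$ is compact and complete there is a minimal geodesic $\gamma$ from $u$ to $v$; it has length $d_\Mcal(u,v) \le 2Q\delta_n$ and stays inside $\BallM{x^\ast}{3Q\delta_n}$. Subdivide $\gamma$ into points $u = p_0, p_1, \dots, p_k = v$ with $d_\Mcal(p_{i-1},p_i) \le \lambda_n/4$, so that $k = \bigO{\delta_n/\lambda_n}$. On $C_n$, for each $i$ the covering ball of radius $\lambda_n/4$ containing $p_i$ also contains a node $w_i$ of $G_n$, whence $d_\Mcal(p_i,w_i) \le \lambda_n/2$ and, by the triangle inequality on $\Mcal$, $d_\Mcal(w_{i-1},w_i) \le \tfrac54\lambda_n < \varepsilon_n$ for $n$ large (here $\lambda_n = \smallO{\varepsilon_n}$ enters), so $w_{i-1}w_i$ is an edge of $G_n$. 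Since $d_\Mcal(u,w_0) \le \lambda_n/2$ and $d_\Mcal(v,w_k) \le \lambda_n/2$, the added points $u$ and $v$ are joined to $w_0$ and $w_k$ respectively in $G_n(\{u,v\})$. Concatenating gives a finite path $u - w_0 - w_1 - \cdots - w_k - v$ in $G_n(\{u,v\})$, so $\widetilde d_\Mcal(u,v) < \infty$. (A degenerate case with $k$ too small, i.e.\ $u,v$ already within a covering ball, is handled by inserting one or two covering nodes in the same way.)

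\emph{Item (2): metric axioms.} Nonnegativity, symmetry and the implication $\widetilde d_\Mcal(u,v) = 0 \Rightarrow u = v$ are immediate, since every edge of $G_n(\{u,v\})$ carries a strictly positive, symmetric weight and $G_n(\{u,v\}) = G_n(\{v,u\})$; finiteness is item (1). The substantive point is the triangle inequality, and the plan is to derive it from the auxiliary claim that \emph{adding a further special point never shortens distances between the existing nodes}: if $p \in \BallM{x^\ast}{Q\delta_n}$ and $G_n(U \cup \{p\})$ is formed from $G_n(U)$, then $d_G(x,y)$ is unchanged for all nodes $x,y$ of $G_n(U)$. Indeed, a new point $p$ is adjacent only to nodes of $G_n$, so on any path a visit to $p$ has the shape $a \to p \to b$ with $a,b \in G_n$ and $d_\Mcal(a,p), d_\Mcal(p,b) \le \lambda_n/2$; then $d_\Mcal(a,b) \le \lambda_n < \varepsilon_n$, so $a,b$ are adjacent in $G_n$, and on $D_n$ Definition~\ref{def:delta_good_approx} gives $d_G(a,b) \le d_\Mcal(a,b)(1+\xi_n^2) + \xi_n^3$, while the two edges $ap, pb$ together weigh $\big(d_\Mcal(a,p)+d_\Mcal(p,b)\big)(1+\xi_n^2) + 2\xi_n^3 \ge d_\Mcal(a,b)(1+\xi_n^2) + 2\xi_n^3$; hence replacing $a \to p \to b$ by the geodesic $G_n$‑path from $a$ to $b$ does not increase the path length, and iterating removes all visits to special points. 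Applying this twice yields $\widetilde d_\Mcal(u,v) = d_{G_n(\{u,v,w\})}(u,v)$, $\widetilde d_\Mcal(v,w) = d_{G_n(\{u,v,w\})}(v,w)$ and $\widetilde d_\Mcal(u,w) = d_{G_n(\{u,v,w\})}(u,w)$ for any triple in $\BallM{x^\ast}{Q\delta_n}$, so the triangle inequality for $\widetilde d_\Mcal$ reduces to the triangle inequality for the honest weighted‑graph metric $d_{G_n(\{u,v,w\})}$.

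\emph{Main obstacle.} The delicate step is precisely this reduction: the ``no‑shortcut'' argument needs the $\delta_n$‑good approximation bound at the nodes $a,b$ generated by the rerouting, and these can lie slightly outside $\BallM{x^\ast}{Q\delta_n}$ — within $\lambda_n/2$ of it. I would handle this either by running the whole construction with the approximation radius $Q\delta_n$ replaced by a slightly larger constant multiple of $\delta_n$ (Propositions~\ref{prop:weighted_graph_distance_delta_approx} and~\ref{prop:shortest_graph_distance_delta_approx} deliver the approximation at any fixed multiple), or by first checking that a $\widetilde d_\Mcal$‑geodesic between points of $\BallM{x^\ast}{Q\delta_n}$ cannot leave a controlled enlargement of it: combining $d_\Mcal \le \widetilde d_\Mcal$ along the geodesic with the upper bound $\widetilde d_\Mcal(u,v) \le d_\Mcal(u,v)(1+\smallO{1}) + \bigO{\lambda_n}$ (obtained by connecting $u,v$ to nearby graph nodes and invoking \eqref{eq:def_distance_approximation}) forces any node on the geodesic to stay within $\bigO{\delta_n}$ of $x^\ast$. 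Once this bookkeeping is in place, the construction in item (1) and the remaining axioms in item (2) are routine; the real content is that the special edge weights $d_\Mcal(x,u)(1+\xi_n^2)+\xi_n^3$ are exactly calibrated so that detours through them are never shorter than the genuine graph routes.
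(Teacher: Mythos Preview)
Your proposal is correct and follows essentially the same route as the paper: item~(1) via covering balls along the geodesic, and item~(2) via the key ``no-shortcut'' computation showing that the special-edge weight $d_\Mcal(a,p)(1+\xi_n^2)+\xi_n^3$ is calibrated so that $a\to p\to b$ is never cheaper than the $G_n$-route from $a$ to $b$. Your framing of~(2)---reducing all three distances to the single ambient graph $G_n(\{u,v,w\})$ so that the triangle inequality becomes automatic---is slightly cleaner than the paper's direct shortcut argument, and you explicitly flag the boundary issue (that $a,b$ may lie $\lambda_n/2$ outside $\BallM{x^\ast}{Q\delta_n}$) which the paper glosses over.
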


\begin{proof}

\begin{figure}
\centering
\begin{tikzpicture}[scale=0.8]
	\tikzstyle{vertex}=[fill, circle, inner sep=0pt, minimum size=5pt]
	\tikzstyle{edge}=[color=black]
	
	\draw node[vertex] (u) at (0,0) {};
	\path (u)+(40:0.5) node[vertex,red] (c1) {};
	\draw node (z1) at (3,0) {};
	\path (z1)+(-90:0.5) node[vertex,red] (c2) {};
	\draw node (z2) at (6,0) {};
	\path (z2)+(120:0.8) node[vertex,red] (c3) {};
	\draw node (z4) at (12,0) {};
	\path (z4)+(80:0.5) node[vertex,red] (c4) {};
	\draw node[vertex] (v) at (15,0) {};
	\path (v)+(-110:0.6) node[vertex,red] (c5) {};
	
	\path (u)+(100:0.8) node[vertex,blue] (x0) {};
	\path (z1)+(210:0.8) node[vertex,blue] (x1) {}; 
	\path (z2)+(160:0.4) node[vertex,blue] (x2) {}; 
	\path (x2)+(15:2.5) node (x31) {};
	
	\path (z4)+(300:0.9) node[vertex,blue] (x4) {};
	\path (x4)+(150:2.5) node (x32) {};
	\path (v)+(50:0.6) node[vertex,blue] (x5) {}; 
	
	\path (u)+(180:0.5) node {$u$};
	\path (v)+(0:0.5) node {$v$};
	
	\path (c1)+(45:0.5) node {\color{red} $c_{t_1}$};
	\path (c2)+(-45:0.5) node {\color{red} $c_{t_2}$};
	\path (c3)+(90:0.5) node {\color{red} $c_{t_3}$};
	\path (c4)+(90:0.5) node {\color{red} $c_{t_{k-1}}$};
	\path (c5)+(270:0.5) node {\color{red} $c_{t_k}$};

	\path (x0)+(200:0.4) node {\color{blue} $x_{t_1}$};
	\path (x1)+(-90:0.4) node {\color{blue} $x_{t_2}$};
	\path (x2)+(-90:0.4) node {\color{blue} $x_{t_3}$};
	\path (x4)+(300:0.5) node {\color{blue} $x_{t_{k-1}}$};
	\path (x5)+(45:0.5) node {\color{blue} $x_{t_k}$};
	
	\draw[thick] (u) -- (z2);
	\draw node at (9,0) {$\dots\dots$};
	\draw[thick] (z4) -- (v);
	\draw[blue] (u) -- (x0) -- (x1) -- (x2) -- (x31);
	\draw[blue] (x32) -- (x4) -- (x5) -- (v);
	
	\draw[dashed,red] (c1) circle  (1.5cm);
	\draw[dashed,red] (c2) circle  (1.5cm);
	\draw[dashed,red] (c3) circle  (1.5cm);
	\draw[dashed,red] (c4) circle  (1.5cm);
	\draw[dashed,red] (c5) circle  (1.5cm);
	
\end{tikzpicture}
\caption{Depiction of the covering of the geodesic between $u$ and $v$ by the balls $B_{t_i}$.}
\label{fig:cover_geodesic_uv}
\end{figure}
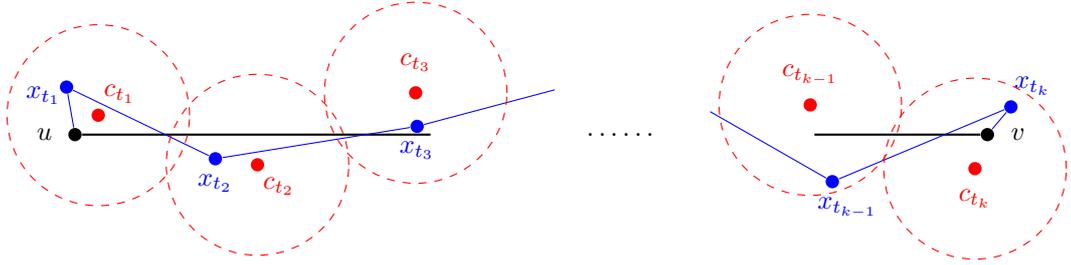

We first prove the first statement. For this, take any $u,v \in \BallM{x^\ast}{Q\delta_n}$ and let $\gamma(u,v)$ denote the geodesic between $u$ and $v$. This geodesic will be covered by a subsequence $B_{t_1}, \dots, B_{t_k}$ of the cover of $\Mcal$, which we rank in order of appearance moving from $u$ to $v$. Let $c_{t_1}, \dots, c_{t_k}$ denote the corresponding centers of these balls, see Figure~\ref{fig:cover_geodesic_uv}. On the event $C_n$ each ball contains a vertex $x_{t_i} \in G_n$ and since 
\[
	d_\Mcal(u,x_{t_1}), d_\Mcal(v,x_{t_k}) \le 2 \frac{\lambda_n}{4} = \frac{\lambda_n}{2}
\]
the edges $(u, x_{t_1})$ and $(v,x_{t_k})$ are present in $G_n(u,v)$. Moreover, since $d_\Mcal(x_{t_i},x_{t_{i+1}})$ is bounded by four times the radius of the balls, it follows that for large enough $n$, $d_\Mcal(x_{t_i},x_{t_{i+1}}) \le \lambda_n = \smallO{\varepsilon_n}$ and thus, for $n$ large enough, $\{x_{t_1},\dots, x_{t_k}\}$ is a path in $G_n$. We thus conclude that $u$ and $v$ are connected in $G_n(u,v)$.

Note that because of this property, on the event $\Omega_n$, the extended manifold distance between $\widetilde{d}_\Mcal$ is well-defined on $\Mcal$. 

We are left to show that on the event $\Omega_n$, the extended manifold distance is a true distance. Note that the only non-trivial part is the triangle inequality. Let $u,v,z\in \BallM{x^\ast}{Q\delta_n}$ and consider the graphs $G^{(1)} = G_n(u,v)$ and $G^{(2)} = G_n(u,v,z)$. Now observe that the triangle inequality can only be violated if $z$ creates a short-cut, i.e. if the shortest weighted path between $u$ and $v$ in $G^{(1)}$ is longer than in $G^{(2)}$. Suppose that this is true, and let $\pi_1 = \{u, \dots, y_1, z, y_2, \dots, v\}$ denote this new weighted shortest path in $G^{(2)}$. Since $y_1$ and $y_2$ are connected to $z$ in $G^{(2)}$ it follows that $d_\Mcal(z,y_i) \le \lambda_n/2$. However, by the triangle inequality for $d_\Mcal$, this implies that $d_\Mcal(y_1,y_2) \le \lambda_n = \smallO{\varepsilon_n}$ and hence, for sufficiently large $n$, the edge $(y_1, y_2)$ is present in $G_n$ and thus also in $G^{(1)}$ and $G^{(2)}$. 

Let $\hat{\pi} = \{y_1 := x_0, x_{1}, \dots, x_{m-1}, y_2:=x_m\}$ denote the shortest weighted path in $G_n$ between $y_1$ and $y_2$, i.e. $d_G(y_1,y_2) = \sum_{t = 1}^m w_{x_{t-1} \, x_{t}}$, and take $\pi_2 = \{u, \dots, y_1, x_1, \dots, x_{m-1}, y_2, \dots, v\}$. Then $\pi_2$ is a path between $u$ and $v$ that excludes $z$. See also Figure~\ref{fig:triangle_inequality_proof}. We will show that the total weight of this path is at most that of $\pi_1$.

For simplicity lets us denote by $\|\pi\|$ the total weight of a path $\pi$. Since $d_G$ is a $\delta_n$-good approximation,
\[
	\|\hat{\pi}\| := \sum_{t = 1}^m w_{x_{t-1} \, x_{t}} = d_G(y_1,y_2) \le d_{\Mcal}(y_1,y_2)(1+\xi_n^2) + \xi_n^3
\]
holds on the event $\Omega_n$. Applying the triangle inequality for $d_\Mcal$ we get
\begin{align*}
	\|\hat{\pi}\| &\le d_\Mcal(y_1,z)(1+\xi_n^2) + d_\Mcal(z,y_2)(1+\xi_n^2) + \xi_n^3\\
	&\le d_\Mcal(y_1,z)(1+\xi_n^2) + d_\Mcal(z,y_2)(1+\xi_n^2) + 2\xi_n^3 = w_{y_1 z} + w_{y_2 z}. 
\end{align*}
This implies that the total weight of the path $\pi_2$ is at most that of $\pi_1$ from which we conclude that $z$ cannot create a short-cut and hence $\widetilde{d}_\Mcal$ satisfies the triangle inequality.

\begin{figure}
\centering
\begin{tikzpicture}
	\tikzstyle{vertex}=[fill, circle, inner sep=0pt, minimum size=7pt]
	\draw node[vertex] (u) at (0,0) {};
	\draw node[vertex] (v) at (10,0) {};
	
	\path (u)+(3.5,-2) node[vertex] (y1) {};
	\path (y1)+(1.5,-0.5) node[vertex] (w) {};
	\path (y1)+(3,0) node[vertex] (y2) {};
	
	\path (u)+(135:0.5) node {$u$};
	\path (v)+(45:0.5) node {$v$};
	\path (y1)+(-135:0.5) node {$y_1$};
	\path (w)+(-90:0.5) node {$z$};
	\path (y2)+(-45:0.5) node {$y_2$};
	\path (y1)+(1.5,1) node {$\hat{\pi}$};
	
	\draw[thick,decorate, decoration=snake] (u) -- (v);
	\draw[thick,blue,decorate, decoration=snake] (u) -- (y1);
	\draw[thick,dashed] (y1) -- (w);
	\draw[thick,dashed] (w) -- (y2);
	\draw[thick,blue,decorate, decoration=snake] (y2) -- (v);
	\draw[thick,red] (y1) -- (y2);
	\draw[thick,blue,decorate, decoration=snake] (y1) to[out=45,in=135] (y2);
\end{tikzpicture}
\caption{Abstract depiction of the weighted shortest path between $u$ and $v$ created by adding $z$ and the path $\pi_2$, given in blue.}
\label{fig:triangle_inequality_proof}
\end{figure}
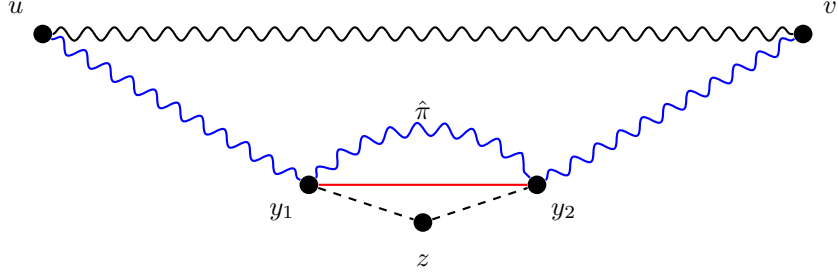
\end{proof}


We are now ready to prove Lemma~\ref{lem:good_event}. 

\begin{proof}[Proof of Lemma~\ref{lem:good_event}]

Note that for any two nodes $u,v \in G_n$ with $u,v \in \BallM{x^\ast}{Q \delta_n}$, Lemma~\ref{lem:extended_distance_metric_space} implies that $u$ and $v$ are connected by a path in $G_n$. Hence the only part of Lemma~\ref{lem:good_event} to prove is property $\Omega 2$ there.

Take any  $u,v \in \BallM{x^\ast}{3\delta_n}$. Then on the event $\Omega_n$, by definition of the extended distance $\widetilde{d}_\Mcal$, there exists $x_u, x_v \in G_n$ such that $d_\Mcal(u,x_u) \le \lambda_n/2$, $d_\Mcal(v,x_v) \le \lambda_n/2$ and
\begin{align*}
	\widetilde{d}_\Mcal(u,v) &= d_\Mcal(u,x_u)(1 + \xi_n^2) + d_\Mcal(v,x_v)(1 + \xi_n^2) + 2\xi_n^3 + d_G(x_u,x_v)\\
	&\le \lambda_n (1 + \xi_n^2) + 2\xi_n^3 + d_G(x_u,x_v). \numberthis \label{eq:proof_good_event_d1}
\end{align*}
Moreover, since $Q > 3$ and $\lambda_n = \smallO{\delta_n^3}$ we can assume that $x_u, x_v \in \BallM{x^\ast}{Q\delta_n}$, for sufficiently large $n$. Since the approximation~\eqref{eq:def_distance_approximation} holds on the event $\Omega_n$, we have
\begin{align*}
	\left|d_G(x_u,x_v) - d_\Mcal(u,v)\right| &\le \left|d_G(x_u,x_v) - d_\Mcal(x_u,x_v)\right|
			+ \left|d_\Mcal(x_u,x_v) - d_\Mcal(u,v)\right| \\
		&\le \left|d_G(x_u,x_v) - d_\Mcal(x_u,x_v)\right| + d_\Mcal(x_u,u) + d_\Mcal(x_v,v) \\
		&\le d_\Mcal(x_u,x_v)\xi_n^2 + \xi_n^3 + d_\Mcal(x_u,u) + d_\Mcal(x_v,v) \\
		&\le d_\Mcal(x_u,x_v)\xi_n^2 + \xi_n^3 + \lambda_n.	\numberthis \label{eq:proof_good_event_d2}
\end{align*}

Combining~\eqref{eq:proof_good_event_d1} and~\eqref{eq:proof_good_event_d2} we get
\begin{align*}
	\left|\widetilde{d}_\Mcal(u,v) - d_\Mcal(u,v)\right| 
	&\le \left|\widetilde{d}_\Mcal(u,v) - d_G(x_u,x_v)\right| 
		+ \left|\vphantom{\widetilde{d}_\Mcal}d_G(x_u,x_v) - d_\Mcal(u,v)\right| \\
	&\le 2\widetilde{d}_\Mcal(u,x_u) + 2\widetilde{d}_\Mcal(v,x_v) 
		+ \left|d_G(x_u,x_v) - d_\Mcal(x_u,x_v)\right|\\
	&\le \lambda_n(1+\xi_n^2) + d_\Mcal(x_u,x_v)\xi_n^2 + 3\xi_n^3 + \lambda_n
\end{align*}
Applying the triangle inequality to the last distance, 
\[
	d_\Mcal(x_u,x_v) \le d_\Mcal(u,v) + d_\Mcal(u,x_u) + d_\Mcal(v,x_v) \le d_\Mcal(u,v) + \lambda_n
\]
we get
\[
	\left|\widetilde{d}_\Mcal(u,v) - d_\Mcal(u,v)\right| \le d_\Mcal(u,v)\xi_n^2 + 2\lambda_n(1 + \xi_n^2) + 3\xi_n^3
	= \smallO{\delta_n^3}.
\]

\end{proof}

%
%

Finally, we need to prove Proposition~\ref{prop:L1_error_adjusted_wasserstein_manifold}. Since, on the event $\Omega_n$, we have
\[
	\left|\widetilde{d}_\Mcal(u,v) - d_\Mcal(u,v)\right| \le \smallO{\delta_n^3}.
\] 
the proof follows immediately from the following elementary result on Wasserstein metrics.

\begin{lemma}\label{lem:wasserstein_metric_approx_general}
Let $(\Xcal,d)$ and $(\Xcal, \widetilde{d})$ be two metric spaces and $U \subseteq \Xcal$ such that
\[
	|d(x,y) - \widetilde{d}(x,y)| \le K
\]
holds for all $x,y \in U$ and some $K > 0$. Denote by $W_1$ and $\widetilde{W}_1$ the Wasserstein metric associated with $d$ and $\widetilde{d}$, respectively. Then for any two probability measures $\mu_1$ and $\mu_2$ on $U$,
\[
	\left|\widetilde{W}_1(\mu_1, \mu_2) - W_1(\mu_1, \mu_2)\right| \le K.
\]
\end{lemma}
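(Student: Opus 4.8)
The plan is to exploit the fact that the set of couplings $\Gamma(\mu_1,\mu_2)$ depends only on the marginals and not on the metric, so that $W_1(\mu_1,\mu_2)$ and $\widetilde{W}_1(\mu_1,\mu_2)$ are the infima of two different cost functionals over one and the same feasible set. Since $\mu_1$ and $\mu_2$ are supported on $U$, every coupling $\mu \in \Gamma(\mu_1,\mu_2)$ is supported on $U \times U$; hence the hypothesis $|d(x,y) - \widetilde{d}(x,y)| \le K$ holds $\mu$-almost everywhere, and in particular $\widetilde{d} \le d + K$ and $d \le \widetilde{d} + K$ hold $\mu$-a.e.

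First I would dispose of the degenerate case: if $W_1(\mu_1,\mu_2) = \infty$, then for every coupling $\mu$ we have $\int d\,d\mu = \infty$, and the a.e.\ bound $\widetilde{d} \ge d - K$ forces $\int \widetilde{d}\,d\mu = \infty$ as well, so $\widetilde{W}_1(\mu_1,\mu_2) = \infty$ and there is nothing to prove; the symmetric argument handles $\widetilde{W}_1(\mu_1,\mu_2) = \infty$. Assuming both are finite, fix $\eta > 0$ and choose an $\eta$-optimal coupling $\mu \in \Gamma(\mu_1,\mu_2)$ for $W_1$, i.e.\ $\int_{\Xcal \times \Xcal} d(x,y)\,d\mu(x,y) \le W_1(\mu_1,\mu_2) + \eta$. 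Integrating the pointwise bound $\widetilde{d} \le d + K$ against $\mu$ gives
\[
	\widetilde{W}_1(\mu_1,\mu_2) \le \int_{\Xcal \times \Xcal} \widetilde{d}(x,y)\,d\mu(x,y) \le \int_{\Xcal \times \Xcal} d(x,y)\,d\mu(x,y) + K \le W_1(\mu_1,\mu_2) + \eta + K.
\]
Letting $\eta \to 0$ yields $\widetilde{W}_1(\mu_1,\mu_2) \le W_1(\mu_1,\mu_2) + K$. Interchanging the roles of $d$ and $\widetilde{d}$ — which is legitimate since the hypothesis is symmetric in the two metrics — gives $W_1(\mu_1,\mu_2) \le \widetilde{W}_1(\mu_1,\mu_2) + K$, and the two inequalities together are exactly $|\widetilde{W}_1(\mu_1,\mu_2) - W_1(\mu_1,\mu_2)| \le K$.

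The argument is entirely elementary. The only points requiring a moment's care are verifying that $\Gamma(\mu_1,\mu_2)$ is genuinely metric-independent and that every coupling is carried by $U \times U$ (so the a.e.\ comparison applies), and working with $\eta$-optimal couplings rather than an optimal one, which both avoids any appeal to existence of a minimizer and sidesteps finiteness issues. I do not anticipate any real obstacle here; this is a routine comparison estimate for the Kantorovich distance.
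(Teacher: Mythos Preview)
Your proof is correct and follows essentially the same approach as the paper: both exploit that the coupling set is metric-independent, integrate the pointwise bound $|\widetilde{d}-d|\le K$ against a (near-)optimal coupling for one metric to bound the other Wasserstein distance, and then swap roles. Your use of $\eta$-optimal couplings and the explicit treatment of the infinite case are minor technical refinements over the paper's version (which invokes optimal couplings directly), but the argument is the same.
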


\begin{proof}
For any coupling $\mu$ between $\mu_1$ and $\mu_2$,
\begin{align*}
	\int \widetilde{d}(x,y) \dd \mu(x,y)
	&\le \int \left(d(x,y) + K\right) \dd \mu(u,v) 
		\le \int d(x,y) \dd \mu(x,y) + K.
\end{align*}
and similarly
\[
	\int d(x,y) \dd \mu(x,y) \ge \int \widetilde{d}(x,y) \dd \mu(x,y) + K.
\]
Next we note that the Wasserstein metric is achieved by some optimal coupling. Let $\mu^\ast$ denote the optimal coupling for $\mu_1$ and $\mu_2$ with respect to $d$, i.e. $W_1(\mu_1,\mu_2) = \int d(x,y) \dd \mu^\ast(x,y)$, and define $\widetilde{\mu}^\ast$ similarly. Then
\[
	\widetilde{W}_1(\mu_1,\mu_2) \le \int \widetilde{d}(x,y) \dd \mu^\ast(x,y) \le W_1(\mu_1,\mu_2) + K,
\]
and
\[
	W_1(\mu_1,\mu_2) \le \int d(x,y) \dd \widetilde{\mu}^\ast(x,y) \le \widetilde{W}_1(\mu_1,\mu_2) + K,
\]
from which the result follows.
\end{proof}

\subsection{Probability measures on graphs}\label{ssec:proof_probability_measures_graphs}

In this section we give the proof of Proposition~\ref{prop:coupling_graph_discrete_rw}. Recall that $m_x^G$ and $m_x^\Mcal$ denote the uniform probability measures on the set of nodes in $\BallG{x}{\delta_n}$ and $\BallM{x}{\delta_n}$, respectively. The goal is then to show that
\[
	\Exp{W_1(m_x^G, m_x^\Mcal)} = \smallO{\delta_n^3}.
\]

As we mentioned, these two sets are not necessarily contained in each other. Hence, to bound the Wasserstein metric we will work with slightly smaller and larger balls $B^-$ and $B^+$ such that
\[
	B^- \cap G_n \subseteq \BallG{x}{\delta_n}, \BallM{x}{\delta_n} \cap G_n \subseteq B^+ \cap G_n.
\]
We can then obtain an upper bound by comparing the Wasserstein metric between $m_x^G$, $m_x^\Mcal$ and the uniform probability measure on $B^+ \cap G_n$. This bound can be made $\smallO{\delta_n^3}$, by carefully selecting the radii of $B^-$ and $B^+$.

Before we give the details, we need the following general result concerning Poisson random variables.

\begin{lemma}\label{lem:technicall_poisson}
Let $\alpha_n, \beta_n \to \infty$ and $X_n$, $Y_n$ be two independent Poisson random variables with means $\alpha_n$ and $\beta_n$, respectively. Then
\[
	\CExp{\frac{X_n}{X_n + Y_n}}{X_n + Y_n \ge 1} = \bigO{\frac{\alpha_n}{\alpha_n + \beta_n}}.
\]
\end{lemma}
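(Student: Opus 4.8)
The plan is to recognize that the conditional expectation in question is in fact \emph{exactly} $\alpha_n/(\alpha_n+\beta_n)$, so that the $\bigO{\cdot}$ assertion is immediate (and, incidentally, the growth hypotheses $\alpha_n,\beta_n\to\infty$ are not even needed for this). The key input is the classical splitting/thinning property of the Poisson distribution: writing $S_n := X_n + Y_n$, we have $S_n \sim \Po(\alpha_n+\beta_n)$, and conditionally on $\{S_n = k\}$ the variable $X_n$ has a Binomial distribution with parameters $k$ and $p_n := \alpha_n/(\alpha_n+\beta_n)$. I would establish this by a one-line computation comparing $\Prob{X_n = j,\, Y_n = k-j}$ with $\Prob{S_n = k}$, which collapses to $\binom{k}{j} p_n^j (1-p_n)^{k-j}$.

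Granting this, the argument proceeds as follows. For each integer $k \ge 1$, on the event $\{S_n = k\}$ we have $X_n/S_n = X_n/k$, so that
\[
	\CExp{\frac{X_n}{S_n}}{S_n = k} = \frac{1}{k}\,\CExp{X_n}{S_n = k} = \frac{1}{k}\cdot k\,p_n = p_n .
\]
Since this value does not depend on $k$, averaging over $k \ge 1$ against the conditional law of $S_n$ given $\{S_n \ge 1\}$ yields
\[
	\CExp{\frac{X_n}{X_n+Y_n}}{X_n+Y_n \ge 1} = p_n = \frac{\alpha_n}{\alpha_n+\beta_n},
\]
which is in particular $\bigO{\alpha_n/(\alpha_n+\beta_n)}$, as claimed.

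There is essentially no obstacle here. The only point requiring (trivial) justification is the binomial conditional law, and one should also note that conditioning on $\{S_n \ge 1\}$ is precisely what makes the ratio $X_n/S_n$ well defined, so no separate treatment of the null event $\{S_n = 0\}$ is needed. If one wished to avoid invoking the splitting property as a black box, the identical conclusion follows by expanding $\CExp{X_n/S_n}{S_n \ge 1}$ as a double sum over the values of $X_n$ and $Y_n$ and resumming; this route is slightly more tedious but entirely elementary, and it reproduces the same exact value $\alpha_n/(\alpha_n+\beta_n)$.
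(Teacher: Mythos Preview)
Your argument is correct and in fact establishes the sharper statement that the conditional expectation equals $\alpha_n/(\alpha_n+\beta_n)$ exactly, not merely up to a multiplicative constant. The key step---that $X_n$ given $X_n+Y_n=k$ is $\mathrm{Binomial}(k,p_n)$ with $p_n=\alpha_n/(\alpha_n+\beta_n)$---is the standard Poisson splitting identity, and once you have it the tower property finishes the proof in one line. As you observe, the hypotheses $\alpha_n,\beta_n\to\infty$ play no role.

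The paper takes a markedly different and heavier route: it conditions on the value of $X_n$ rather than on the sum, then controls the inner expectation $\CExp{k/(k+Y_n)}{Y_n\ge 1}$ via Chernoff-type concentration bounds for $Y_n$, splitting the outer sum over $k$ into a bulk region $[\alpha_n^-,\alpha_n^+]$ and tails handled by concentration for $X_n$. This yields only the $\bigO{\cdot}$ conclusion and genuinely uses $\alpha_n,\beta_n\to\infty$ to make the concentration effective. Your approach is both shorter and strictly stronger; the paper's method, while it works, obscures the exact identity and introduces unnecessary asymptotic hypotheses.
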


\begin{proof}
First, let $C > \sqrt{2}$ be some large fixed constant. Then we have that (c.f. \cite[Lemma 2.1]{penrose2003random})
\[
	\Prob{\left|X_n - \alpha_n\right| > C \sqrt{\alpha_n \log(\alpha_n)}}
	= \bigO{\alpha_n^{-\frac{C^2}{2}}}.
\]
In particular, if we define $\alpha_n^\pm = \alpha_n \pm C \sqrt{\alpha_n \log(\alpha_n)}$, then
\[
	\max\left\{\Prob{X_n < \alpha_n^-}, \Prob{X_n > \alpha_n^+}\right\} = \bigO{\alpha_n^{-\frac{C^2}{2}}}.
\]
Similar results hold for $Y_n$ with $\beta_n^\pm$ defined similarly.

We start by conditioning on $X_n$:
\begin{align*}
	\CExp{\frac{X_n}{X_n + Y_n}}{X_n + Y_n \ge 1}
	&= \sum_{k = 0} \CExp{\frac{k}{k + Y_n}}{Y_n \ge 1} \Prob{X_n = k}\\
	&= \sum_{k < \alpha_n^-} \CExp{\frac{k}{k + Y_n}}{Y_n \ge 1} \Prob{X_n = k}\\
	&\hspace{10pt}+ \sum_{k \ge \alpha_n^-} \CExp{\frac{k}{k + Y_n}}{Y_n \ge 1} 
		\Prob{X_n = k}\\
	&:= I_n^{(1)} + I_n^{(2)}
\end{align*}
We will bound each term separately.

First we bound the expectation inside each summation by further conditioning on $Y_n$:
\begin{align*}
	\CExp{\frac{k}{k + Y_n}}{Y_n \ge 1}
	&\le \frac{k}{k + 1} \Prob{1 \le Y_n < \beta_n^-}\\
	&\hspace{10pt}+ \sum_{\beta_n^- \le y \le \beta_n^+} 
		\frac{k}{k + y} \Prob{Y_n = y}\\
	&\hspace{10pt}+ \frac{k}{k + \beta_n^+} \Prob{Y_n > \beta_n^+}\\
	&\le k \left(\frac{1}{k+\beta_n^-} 
		+ \frac{\Prob{\left|Y_n - \beta_n\right| > C\sqrt{\beta_n \log(\beta_n)}}}{k + 1}\right)\\
	&\le \frac{k}{k + \beta_n^-} 
		\left(1 + \bigO{\beta_n^{1-\frac{C^2}{2}}}\right)
		= (1 + \smallO{1}) \frac{k}{k + \beta_n^-},
\end{align*}
because $C > \sqrt{2}$. We can now bound $I_n^{(1)}$ as follows
\begin{align*}
	I_n^{(1)} &\le \frac{\alpha_n^-}{\beta^-}\Prob{X_n < \alpha_n^-} 
	= \bigO{(\beta_n^{-})^{-1} \alpha_n^{1 - \frac{C^2}{2}}}
	= \bigO{\beta_n^{-1} \alpha_n^{1 - \frac{C^2}{2}}}
\end{align*}
where we used that $\beta_n^- \sim \beta_n$, i.e $\beta_n^-/\beta_n \to 1$.

For $I_n^{(2)}$ we have, using that $\alpha_n^- \sim \alpha_n$,
\begin{align*}
	I_n^{(2)} &\le (1+\smallO{1}) \sum_{k \ge \alpha_n^-} \frac{k}{k + \beta_n^-} \Prob{X_n = k}
		\le \bigO{\frac{\Exp{X_n}}{\alpha_n^- + \beta_n^-}}
		= \bigO{\frac{\alpha_n}{\alpha_n + \beta_n}}
\end{align*}
and thus the result follows since we are free to select $C > \sqrt{2}$ large enough so that $I_n^{(1)}$ is of smaller order.
\end{proof}

We are now ready to prove Proposition~\ref{prop:coupling_graph_discrete_rw}

\begin{proof}[Proof of Proposition~\ref{prop:coupling_graph_discrete_rw}]
Let $\delta_n^\pm = (\delta_n \pm \xi_n^3)/(1 \mp \xi_n^2)$ and let $D_n$ be the event on which approximation~\eqref{eq:def_distance_approximation} of Definition~\ref{def:delta_good_approx} holds. Then
\begin{align*}
	\Exp{W_1(m_{x}^G, m_{x}^\Mcal}
	&\le \Exp{W_1(m_{x}^G, m_{x}^\Mcal) \ind{D_n}}
		+ \bigO{\delta_n (1 - \Prob{D_n})}\\
	&= \Exp{W_1(m_{x}^G, m_{x}^\Mcal) \ind{D_n}} + \smallO{\delta_n^3}.
\end{align*}
It is thus enough to show that the first term is $\smallO{\delta_n^3}$.

Note that on the event $D_n$,
\[
	\BallM{x}{\delta_n^-} \cap G_n \subseteq \BallG{x}{\delta_n}, \BallM{x}{\delta_n} \cap G_n \subseteq 
		\BallM{x}{\delta_n^+} \cap G_n.
\]

Let $V_n \subseteq \Mcal$ be any neighborhood of $x$ such that $\vol_\Mcal(\Bcal_n) = \bigT{\delta_n^N}$ and
\[
	\BallM{x}{\delta_n^-} \subseteq \Bcal_n \subseteq 
			\BallM{x}{\delta_n^+} \cap G_n,
\]
where $\Bcal_n = V_n \cap G_n$. Denote by $m_{n}$ the uniform probability measure on $\Bcal_n$. We will prove that
\begin{equation}\label{eq:coupling_discrete_rw_general}
	\Exp{W_1(m_n,m_x^+)\ind{D_n}} = \smallO{\delta_n^3}.
\end{equation}

Since
\[
	\Exp{W_1(m_{x}^G, m_{x}^\Mcal) \ind{D_n}} \le \Exp{W_1(m_{x}^G,m_x^+)\ind{D_n}} + \Exp{W_1(m_{x}^\Mcal,m_x^+)\ind{D_n}},
\]
applying~\eqref{eq:coupling_discrete_rw_general} twice, once with $\Bcal_n = \BallG{x}{\delta_n}$ and once with $\Bcal_n =\BallM{x}{\delta_n} \cap G_n$, will yield the required result.

Let us write $\Bcal_n^\pm := \BallM{x}{\delta_n^\pm} \cap G_n$ and denote by $m_x^\pm$ the uniform probability measure on $\Bcal_n^\pm$. To establish~\eqref{eq:coupling_discrete_rw_general} we will show that
\begin{equation}\label{eq:proof_coupling_graph_discrete_measures_main}
	\Exp{W_1(m_n,m_x^+)\ind{D_n}} = \bigO{\frac{(\delta_n^+)^N - (\delta_n^-)^N}{(\delta_n^+)^{N-1}}}.
\end{equation}

Note that by definition of $\delta_n^\pm$ we have $(\delta_n^+)^N - (\delta_n^-)^N = \bigO{\xi_n^2 \delta_n^N}$. Therefore, if~\eqref{eq:proof_coupling_graph_discrete_measures_main} holds,
\begin{align*}
	\Exp{W_1(m_n, m_x^+)\ind{D_n}} &\le \bigO{\frac{(\delta_n^+)^N - (\delta_n^-)^N}{\delta_n^{N-1}}}
		= \bigO{\delta_n \xi_n^2} = \smallO{\delta_n^3},
\end{align*}
since $\xi_n = \smallO{\delta_n}$. 

To establish~\eqref{eq:proof_coupling_graph_discrete_measures_main} we condition on $|\Bcal_n^-|$
\begin{align*}
	\Exp{W_1(m_n, m_{x}^+) \ind{D_n}} 
	&= \CExp{W_1(m_n,m_x^+)\ind{D_n}}{|\Bcal_n^-| = 0} \Prob{|\Bcal_n^-| = 0} \\
	&\hspace{10pt}+ \CExp{W_1(m_n,m_x^+)\ind{D_n}}{|\Bcal_n^-| \ge 1} \Prob{|\Bcal_n^-| \ge 1}
\end{align*}

For the first term we have,
\begin{align*}
	\CExp{W_1(m_n,m_x^+)\ind{D_n}}{|\Bcal_n^-| = 0} \Prob{|\Bcal_n^-| = 0} 
	&\le 2\delta_n^+ \Prob{|\Bcal_n^-| = 0} \\
	&= \bigO{\delta_n^-} e^{-n \bigT{(\delta_n^-)^N}}
		= \bigO{\frac{(\delta_n^+)^N - (\delta_n^-)^N}{(\delta_n^+)^{N-1}}},
\end{align*}
where we used that $\Exp{|\Bcal_n^-|} = n\vol_\Mcal(\Bcal_n^-) = n \bigT{(\delta_n^-)^N}$.

It now suffices to show that 
\begin{equation}\label{eq:proof_coupling_graph_discrete_measures_main2}
	\CExp{W_1(m_n,m_x^+)\ind{D_n}}{|\Bcal_n^-| \ge 1} 
	= \bigO{\frac{(\delta_n^+)^N - (\delta_n^-)^N}{(\delta_n^+)^{N-1}}}.
\end{equation}

We will do this by constructing a specific transport plan (coupling) between the measures $m_n$ and $m_x^+$. Define the joint probability mass function on $\Bcal_n \times \Bcal_n^+$
\[
	m(u,v) = \begin{cases}
		\frac{1}{|\Bcal_n^+|} &\mbox{if } u = v \\
		\frac{1}{|\Bcal_n| \, |\Bcal_n^+|}
			&\mbox{if } v \in \Bcal_n^+ \setminus \Bcal_n,
	\end{cases}
\]
and observe that $m(u,v)$ is a coupling between $m_x^G$ and $m_x^+$. Therefore
\begin{align*}
	W_1(m_x^G,m_x^+) &\le \sum_{u \in \Bcal_n} \sum_{v \in \Bcal_n^+} d_\Mcal(u,v) m(u,v)
		= \sum_{u \in \Bcal_n} \sum_{v \in \Bcal_n^+ \setminus \Bcal_n} \frac{d_\Mcal(u,v)}{|\Bcal_n| \, |\Bcal_n^+|}\\
	&\le 2\delta_n^+ \frac{|\Bcal_n^+| - |\Bcal_n|}{|\Bcal_n^+|} 
		\le 2 \delta_n^+ \frac{|\Bcal_n^+|-|\Bcal_n^-|}{|\Bcal_n^+|}
		= 2 \delta_n^+ \frac{|\Bcal_n^+ \setminus \Bcal_n^-|}{|\Bcal_n^+|}.
\end{align*}
Now define $X_n = |\Bcal_n^+ \setminus \Bcal_n^-|$ and $Y_n = |\Bcal_n^-|$. Then $X_n$ and $Y_n$ are independent Poisson random variables satisfying
\[
	\frac{|\Bcal_n^+ \setminus \Bcal_n^-|}{|\Bcal_n^+|} = \frac{X_n}{X_n + Y_n}.
\]
It then follows from Lemma~\ref{lem:technicall_poisson} that
\[
	\CExp{W_1(m_x,m_x^+)}{|\Bcal_n^-| \ge 1} \le \bigO{\frac{\delta_n^+ \Exp{X_n}}{\Exp{X_n} + \Exp{Y_n}}}
	= \bigO{\frac{\delta_n^+ \vol_\Mcal(\Bcal_n^+ \setminus \Bcal_n)}{\vol_\Mcal(\Bcal_n^+)}}.
\]
Equation~\eqref{eq:proof_coupling_graph_discrete_measures_main2} then follows by noting that $\vol_\Mcal(\Bcal_n^+ \setminus \Bcal_n) = \bigT{(\delta_n^+)^N - (\delta_n^-)^N}$.
\end{proof}

\subsection{Continuous and discrete measures on $\Mcal$}\label{ssec:proof_coupling_discrete_continuous_measures}

\subsubsection{Collecting relevant known results}

The following is a summary of results on the Wasserstein metric between empirical and uniform measures on the $N$-dimensional cube. The case $N = 2$ was explicitly stated in~\cite{talagrand1994matching}. Although the results for $N \ge 3$ are known, they are not stated in the explicit form we need. For completeness we thus include a proof here.

\begin{proposition}\label{prop:coupling_uniform_measures_cube}
Let $X_1, X_2, \dots$ be independent uniformly distributed random variables on $[0,1]^N$, let $m_n$ denote the empirical measure
\[
	m_n(y) = \frac{1}{n} \sum_{i = 1}^n \ind{X_i = y},
\]
and $\mu$ the uniform measure on $[0,1]^N$. Then
\[
	\Exp{W_1^N(m_n,\mu)} = \begin{cases}
		\bigO{\sqrt{\frac{\log(n)}{n}}} &\mbox{if } N = 2\\
		\bigO{n^{-1/N}} &\mbox{if } N \ge 3.
	\end{cases}
\]
\end{proposition}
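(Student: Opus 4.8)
The plan is to reduce the Poissonized/empirical matching problem on the cube to the classical transportation estimates for a uniform grid, and then to invoke the known optimal matching results. The case $N=2$ is exactly the theorem of Ajtai--Koml\'os--Tusn\'ady / Talagrand on matching $n$ i.i.d.\ uniform points to a regular grid (or equivalently to the uniform measure), which gives the $\sqrt{\log n / n}$ rate, so there is nothing new to do there beyond citing it. For $N \ge 3$ I would argue as follows. Partition $[0,1]^N$ into $n$ congruent subcubes of side $n^{-1/N}$ (assume $n$ is a perfect $N$-th power; the general case follows by a routine padding/monotonicity argument). Let $\nu_n$ be the measure that puts mass $1/n$ at the center of each subcube. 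By the triangle inequality for $W_1^N$,
\[
	\Exp{W_1^N(m_n,\mu)} \le \Exp{W_1^N(m_n,\nu_n)} + W_1^N(\nu_n,\mu),
\]
and the second term is deterministically $\bigO{n^{-1/N}}$ since every point of $[0,1]^N$ lies within distance $\tfrac{\sqrt N}{2} n^{-1/N}$ of a subcube center. So it suffices to bound $\Exp{W_1^N(m_n,\nu_n)}$, i.e.\ the expected cost of matching the empirical measure of $n$ uniform points to the uniform measure on the $n$-point grid.

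For that, the key step is to use the well-known fact (see e.g.\ the fractional/minimal matching literature cited in the paper: Leighton--Shor, Shor--Yukich, Talagrand) that in dimension $N \ge 3$ the expected transportation cost between $n$ i.i.d.\ uniform points and a regular $n$-point grid is $\bigT{n^{-1/N}}$ — in sharp contrast with the $\sqrt{\log n}$ correction in $d=2$. Concretely, one can bound $W_1^N(m_n,\nu_n)$ by a greedy/hierarchical scheme: at scale $2^{-k}$ partition the cube into $2^{kN}$ boxes, transport within each box the discrepancy between the empirical count and its expectation $n 2^{-kN}$, and sum the contributions over scales $k$ from $0$ up to $\frac1N\log_2 n$. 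Each scale-$k$ contribution is at most (transport distance $\asymp 2^{-k}$) times (total absolute discrepancy across boxes), and the expected total discrepancy at scale $k$ is $\bigO{\sqrt{2^{kN}\cdot n 2^{-kN}}} = \bigO{\sqrt n}$ by the $L^1$-norm of a sum of independent centered box counts (a Cauchy--Schwarz / variance estimate for multinomial counts). This gives $\Exp{W_1^N(m_n,\nu_n)} \le \sum_{k\le \frac1N\log_2 n} \bigO{2^{-k}\sqrt n / n}$; but the naive sum is dominated by the finest scale and yields $\bigO{n^{-1/N}}\cdot\bigO{\text{poly}}$, which is almost but not quite what is wanted, so one must be slightly more careful at the smallest scales. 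The clean fix is: stop the hierarchical scheme at scale $k^\ast = \frac1N\log_2 n$, where each of the $n$ finest boxes contains $\bigO{1}$ points in expectation, and match the $\bigO{1}$ points in each finest box to its grid point at cost $\bigO{n^{-1/N}}$ per box, total expected cost $\bigO{n\cdot n^{-1/N}\cdot \frac1n} = \bigO{n^{-1/N}}$; the coarser scales $k < k^\ast$ contribute a geometric sum $\sum_{k<k^\ast} 2^{-k} n^{-1/2}$ which is $\bigO{n^{-1/2}} = \smallO{n^{-1/N}}$ for $N\ge 3$. Combining the two regimes gives $\Exp{W_1^N(m_n,\nu_n)} = \bigO{n^{-1/N}}$, and hence the claim.

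I expect the main obstacle to be making the hierarchical (multiscale) matching bound fully rigorous while keeping the constants under control — in particular, correctly handling the finest scale where boxes hold a bounded number of points, and verifying that the expected $L^1$-discrepancy at each scale really is $\bigO{\sqrt n}$ uniformly (this is where one uses independence of the counts in disjoint boxes and a second-moment bound, together with Jensen's inequality). A secondary technical point is removing the assumption that $n$ is a perfect $N$-th power: this is handled by choosing the grid to have $\lfloor n^{1/N}\rfloor^N \le n$ cells, noting the resulting mismatch in total mass is $\smallO{n}$ points, and transporting the leftover mass at cost $\bigO{1}$ per unit, which contributes $\smallO{n^{-1/N}}$ overall after normalization; alternatively one invokes the standard subadditivity/monotonicity of $n\mapsto n^{1/N}\Exp{W_1^N(m_n,\mu)}$. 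Since the paper only needs the upper bound (not the matching lower bound), none of the harder converse arguments from the matching literature are required here.
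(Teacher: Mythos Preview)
Your route for $N\ge 3$ is genuinely different from the paper's. The paper does no multiscale decomposition at all: it introduces an independent uniform sample $Y_1,\dots,Y_n$, writes the bipartite matching cost $M_n=\inf_\sigma\sum_i\|X_i-Y_{\sigma(i)}\|$, and uses Kantorovich--Rubinstein duality together with Jensen's inequality (moving the expectation over the $Y_i$'s inside the supremum) to get
\[
\Exp{W_1^N(m_n,\mu)}\;\le\;\frac{1}{n}\,\Exp{M_n}.
\]
It then simply cites Talagrand's two-sample matching theorem $\Exp{M_n}=\bigO{n^{1-1/N}}$ for $N\ge 3$ as a black box. So the paper's argument is a short symmetrize-and-cite, while yours is the classical hands-on dyadic scheme; what you gain is self-containedness, what the paper gains is brevity.

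There is, however, a concrete slip in your sketch that also explains your internal confusion about which scale dominates. The expected total absolute discrepancy at scale $k$ is \emph{not} $\bigO{\sqrt{n}}$. Each of the $2^{kN}$ box counts has standard deviation $\asymp\sqrt{n\,2^{-kN}}$, so by Jensen
\[
\Exp{\sum_i |N_i-\Exp{N_i}|}\;\le\;2^{kN}\sqrt{n\,2^{-kN}}\;=\;2^{kN/2}\sqrt{n};
\]
your expression $\sqrt{2^{kN}\cdot n\,2^{-kN}}$ would only arise if one applied a second-moment bound to the \emph{signed} sum of discrepancies (which is identically zero) rather than termwise. With the correct estimate the scale-$k$ contribution to $W_1^N$ is $n^{-1}\cdot 2^{-k}\cdot 2^{kN/2}\sqrt{n}=n^{-1/2}2^{k(N/2-1)}$, which for $N\ge 3$ is \emph{increasing} in $k$; summing geometrically up to $k^\ast=\tfrac{1}{N}\log_2 n$ gives $n^{-1/2}\cdot n^{1/2-1/N}=n^{-1/N}$ directly. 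So no separate ``clean fix'' at the finest scale is needed, and the perfect-$N$th-power / padding issue is handled just as you say.
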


\begin{proof}
The result for $N = 2$ follows from \cite[Equation (1.1)]{talagrand1994matching}, see also the results in~\cite{leighton1986tight} and~\cite{shor1991minimax}. For $N \ge 3$ we let $Y_1, Y_2, \dots$ be independent uniformly distributed random variables on $[0,1]^N$ and define
\[
	M_n := \inf_{\sigma} \sum_{i = 1}^n \|X_i - Y_{\sigma(i)}\|,
\]
where the infimum is taken over all permutations $\sigma$ of $\{1, 2, \dots, n\}$. Then, it follows from~\cite[Lemma 1]{talagrand1992matching} that
\[
	M_n = \sup_{f \in L_1} \left|\sum_{i = 1}^n f(X_i) - f(Y_i)\right|,
\]
where $L_1$ now denotes the set of Lipschitz continues functions with constant $1$, with respect to the Euclidean distance $d_N$. 

Next, we recall the duality formula for the Wasserstein metric on the space $\Xcal$,
\[
	W_1(\mu_1, \mu_2) = \sup_{f \in L_1} \int_\Xcal f(x) \, d\mu_1(x) - \int_\Xcal f(y) \, d\mu_2(y).
\]

Since 
\[
	\int_{[0,1]^N} f(z) \dd \mu(z) = \Exp{f(Y_i)},
\]
we have
\begin{align*}
	W_1^N(m_n, \mu) 
	&= \sup_{f \in L_1} \left|\frac{1}{n}\sum_{i = 1}^n \left(f(X_i) - \int_{[0,1]^N} f(z) \dd \mu(z)\right)\right|\\
	&= \frac{1}{n}\sup_{f \in L_1} \left|\sum_{i = 1}^n f(X_i) - \Exp{f(Y_i)}\right|\\
	&\le \frac{1}{n}\CExp{\sup_{f \in L_1} \left|\sum_{i = 1}^n f(X_i) - f(Y_i)\right|}{X_1, \dots, X_n}
		= \frac{1}{n}\CExp{M_n}{X_1, \dots, X_n},
\end{align*}
and hence
\[
	\Exp{W_1^N(m_X, \mu)} \le \frac{\Exp{M_n}}{n}.
\]
Finally~\cite[Theorem 1]{talagrand1992matching} implies for $N \ge 3$,
\[
	\Exp{M_n} = \bigO{n^{1-1/N}},
\]
which then yields
\[
	\Exp{W_1^N(m_n, \mu)} = \bigO{n^{-1/N}}.
\]
\end{proof}

\subsubsection{Uniform and discrete measures on the unit cube}

We first extend Proposition~\ref{prop:coupling_uniform_measures_cube} to the case where the points correspond to a Poisson process. We will actually proof a slightly more general version which allows for intensities $(1+\smallO{1})n$.

\begin{lemma}\label{lem:couling_discrete_uniform_measure_N_square}
Consider the $N$-dimensional unit cube $[0,1]^N$, with $N \ge 2$, and consider a Poisson process $\Pcal$ with intensity measure $(1+f_n) n \, \dd \vol_N$ on $[0,1]^N$, for some sequence $0 \le f_n \to 0$. Let $m^N_\Pcal$ denote the empirical random measure with respect to $\Pcal$, i.e. 
\[
	m_\Pcal^N(y) = \frac{1}{|\Pcal|} \sum_{p \in \Pcal} \ind{p = y},
\]
and $\mu^N$ the uniform measure on the square. Then, as $n \to \infty$,
\[
	\Exp{W_1^N(m^N_\Pcal,\mu^N)} = \bigO{\log(n) n^{-1/N}}
\]
\end{lemma}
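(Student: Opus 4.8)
The plan is a standard de-Poissonization argument on top of Proposition~\ref{prop:coupling_uniform_measures_cube}. First I would condition on the total number of points $|\Pcal|$: since the intensity of $\Pcal$ is a constant multiple of Lebesgue measure, conditionally on $\{|\Pcal| = K\}$ the $K$ points are i.i.d.\ uniform on $[0,1]^N$, so
\[
	\CExp{W_1^N(m_\Pcal^N, \mu^N)}{|\Pcal| = K} = \Exp{W_1^N(m_K, \mu^N)},
\]
where $m_K$ denotes the empirical measure of $K$ i.i.d.\ uniform points in $[0,1]^N$. Merging the two cases of Proposition~\ref{prop:coupling_uniform_measures_cube} via $\sqrt{\log(K)/K} \le \log(K)\, K^{-1/2}$ (valid for $K \ge 3$), that proposition provides a constant $C_N$ and an integer $K_0 \ge 3$ such that $\Exp{W_1^N(m_K, \mu^N)} \le C_N \log(K)\, K^{-1/N}$ for every $K \ge K_0$.

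Next, set $\Lambda_n := (1 + f_n)\, n$, so that $|\Pcal| \sim \Po(\Lambda_n)$ and, since $f_n \to 0$, one has $n \le \Lambda_n \le 2n$ for all large $n$. I would split
\[
	\Exp{W_1^N(m_\Pcal^N, \mu^N)} = \sum_{K \ge 0} \CExp{W_1^N(m_\Pcal^N, \mu^N)}{|\Pcal| = K}\, \Prob{|\Pcal| = K}
\]
according to whether $K \ge \Lambda_n / 2$ or $K < \Lambda_n / 2$. In the first range, once $n$ is large enough that $\Lambda_n / 2 \ge \max\{K_0, e^N\}$, each conditional expectation is bounded by $C_N \log(K)\, K^{-1/N}$, and since $K \mapsto \log(K)\, K^{-1/N}$ is decreasing for $K > e^N$, this is at most $C_N \log(\lceil \Lambda_n/2 \rceil)\, \lceil \Lambda_n/2 \rceil^{-1/N} = \bigO{\log(n)\, n^{-1/N}}$, using $\Lambda_n \asymp n$. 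Summing these against the Poisson weights, whose total is at most $1$, leaves a contribution of order $\bigO{\log(n)\, n^{-1/N}}$.

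For the range $K < \Lambda_n / 2$ I would use the deterministic bound $W_1^N(m_\Pcal^N, \mu^N) \le \mathrm{diam}([0,1]^N) = \sqrt{N}$ (on $\{|\Pcal| = 0\}$ we may simply set $W_1^N := 0$), so this part contributes at most $\sqrt{N}\, \Prob{|\Pcal| < \Lambda_n / 2}$. A Chernoff bound for the Poisson distribution gives $\Prob{|\Pcal| < \Lambda_n / 2} \le e^{-c \Lambda_n} \le e^{-c n}$ for some $c > 0$ and all large $n$, which is $\smallO{\log(n)\, n^{-1/N}}$. Adding the two contributions yields $\Exp{W_1^N(m_\Pcal^N, \mu^N)} = \bigO{\log(n)\, n^{-1/N}}$, as claimed. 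The only delicate points are ensuring that the asymptotic estimate of Proposition~\ref{prop:coupling_uniform_measures_cube} applies uniformly across the entire unbounded range $K \ge \Lambda_n / 2$ — handled by the monotonicity of $\log(K)\, K^{-1/N}$ together with taking $n$ large — and the Poisson tail estimate; there is no substantial obstacle beyond this bookkeeping.
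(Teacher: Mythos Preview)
Your proof is correct and follows essentially the same de-Poissonization strategy as the paper: condition on $|\Pcal|$, invoke Proposition~\ref{prop:coupling_uniform_measures_cube} on the conditioned i.i.d.\ sample, and control the Poisson fluctuations by a concentration bound. The only difference is cosmetic: the paper splits the sum over $k$ into three ranges $k < a_n^-$, $a_n^- \le k \le a_n^+$, $k > a_n^+$ with $a_n^\pm = (1+f_n)n \pm c\sqrt{(1+f_n)n\log n}$, whereas your two-way split at $\Lambda_n/2$ together with the monotonicity of $K \mapsto \log(K)\,K^{-1/N}$ is slightly cleaner and avoids the paper's detour through bounding the Poisson pmf at its mode.
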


\begin{proof}
We shall establish the result by conditioning on the size $|\Pcal|$ which has a Poisson distribution with mean $(1+f_n)n$. Conditioned on $|\Pcal| = k$, each point is uniformly distributed and therefore it follows from Proposition~\ref{prop:coupling_uniform_measures_cube} that as $k_n \to \infty$
\begin{equation}\label{eq:coupling_error_bound_conditioned_P}
	\CExp{W_1(m^N_\Pcal,\mu^N)}{\, |\Pcal|=k_n} = \begin{cases}
		\bigO{\sqrt{\frac{\log(k_n)}{k_n}}} &\mbox{if } N = 2,\\
		\bigO{k_n^{-1/N}} &\mbox{if } N\ge 3.
	\end{cases}
	\, = \bigO{\sqrt{\log(k_n)} k_n^{-1/N}}.
\end{equation}

Recall the Chernoff concentration result (\cite[Lemma 1.2]{penrose2003random}) for a Poisson random variable $\Po(a)$ with mean $a$:
\begin{equation}\label{eq:poisson_concentration}
	\Prob{\left|\Po(a) - a\right| > x} \le 2 e^{-\frac{x^2}{2(a + x)}}.
\end{equation}

Fix a $c > 0$. Then by~\eqref{eq:poisson_concentration} with $a = (1+f_n)n$ and $x = c \sqrt{(1+f_n)n \log(n)}$,
\begin{align*}
	&\hspace{-30pt}\Prob{|\Po((1+f_n)n) - (1+f_n)n| > c \sqrt{(1+f_n)n \log(n)}} \\
	&\le 2 e^{-\frac{c^2 (1+f_n)n \log(n)}{2((1+f_n)n + c \sqrt{n \log(n)})}}
		= \bigO{e^{-\frac{c^2\log(n)}{2}}} = \bigO{n^{-\frac{c^2}{2}}}.
\end{align*}
Therefore, if we define
\[
	a_n^\pm = (1+f_n)n \pm c \sqrt{(1+f_n)n \log(n)},
\]
it follows that
\begin{align*}
	\Prob{\Po((1+f_n)n) < a_n^-} &= \Prob{(1+f_n)n - \Po((1+f_n)n) \ge c \sqrt{(1+f_n)n \log(n)}} \\
	&\le \Prob{|\Po((1+f_n)n) - (1+f_n)n| > c \sqrt{(1+f_n)n \log(n)}}
		= \bigO{n^{-\frac{c^2}{2}}},
\end{align*}
and similarly
\[
	\Prob{\Po((1+f_n)n) \ge a_n^+} = \bigO{n^{-\frac{c^2}{2}}}.
\]

We shall use this and the upper bound~\eqref{eq:coupling_error_bound_conditioned_P} for $\CExp{W_1^N(\widehat{m}^N_\Pcal,\mu^N) }{ \, |P| = k_n}$ to compute an upper bound for $\Exp{W_1^N(m^N_\Pcal,\mu^N)}$ as follows:
\begin{align*}
	\Exp{W_1^N(m^N_\Pcal,\mu^N)} 
	&= \sum_{k = 0}^{a_n^- - 1} \CExp{W_1(m^N_\Pcal,\mu^N) }{\, |P| = k} \Prob{\Po(n) = k} \\
	&\hspace{10pt}+ \sum_{k = a_n^-}^{a_n^+} \CExp{W_1(m^N_\Pcal,\mu^N) }{\, |P| = k} \Prob{\Po(n) = k}\\
	&\hspace{10pt}+ \sum_{k = a_n^+ + 1}^{\infty} \CExp{W_1(m^N_\Pcal,\mu^N) }{\, |P| = k} \Prob{\Po(n) = k}\\
	&:= I_1 + I_2 + I_3.
\end{align*}
For $I_1$ we have
\[
	I_1 \le \sum_{k = 0}^{a_n^- - 1} \Prob{\Po((1+f_n)n) = k} = \Prob{\Po((1+f_n)n) < a_n^-}
	= \bigO{n^{-\frac{c^2}{2}}},
\]
while for $I_3$ we get, using~\eqref{eq:coupling_error_bound_conditioned_P},
\begin{align*}
	I_3 &\le \Prob{\Po((1+f_n)n) > a_n^+} \bigO{\sqrt{\log(a_n^+)} (a_n^+)^{-1/N}}\\
	&= \bigO{\sqrt{\log(a_n^+)} (a_n^+)^{-1/N} n^{-\frac{c^2}{2}}} 
		= \bigO{\sqrt{\log(n)} n^{-\frac{c^2}{2} - \frac{1}{N}}}.
\end{align*}
The main contribution comes from $I_2$ for which we use that $k \mapsto \Prob{\Po(Qn) = k}$ is concave on $[a_n^-, a_n^+]$ and attains is maximum at $k = (1 + f_n)n$ to obtain
\begin{align*}
	I_2 &\le \bigO{\sqrt{\log(n)} n^{-1/N}} \, \Prob{\Po((1+f_n)n) = (1+f_n)n} (a_n^+ - a_n^-)\\
	&\le \bigO{\sqrt{\log(n)} n^{-1/N}} \frac{2 (1+f_n)c \sqrt{(1+f_n)n \log(n)}}{\sqrt{2\pi} \sqrt{n}} \\
	&= \bigO{\log(n) n^{-1/N}},
\end{align*}
where we used~\eqref{eq:coupling_error_bound_conditioned_P} with $k_n = (1+f_n)n$ for the first line and Stirling's approximation for $n!$ for the second line.

Since $c > 0$ was arbitrary we conclude that
\begin{equation}\label{eq:bound_wasserstein_metric_m_ast_mu}
	\Exp{W_1^N(m^N_\Pcal,\mu^N)} = \bigO{\log(n) n^{-1/N}}.
\end{equation}
\end{proof}

\subsubsection{Uniform and discrete measures on the ball $\BallM{x}{\delta_n}$}

The following result follows from Lemma~\ref{lem:couling_discrete_uniform_measure_N_square} by a simple rescaling argument.

\begin{corollary}\label{cor:couling_discrete_uniform_measure_N_square_delta}
Let $r_n \to 0$ and consider a Poisson process $\Pcal$ with intensity $n$ on the $N$-dimensional square $[0,2 r_n]^N$. Let $m^N_\Pcal$ denote the empirical measure on the square $[0,2 r_n]^N$ with respect to $\Pcal$, i.e. 
\[
	m^N_\Pcal(y) = \frac{1}{|\Pcal \cap [0,2 r_n]^N |} \sum_{p \in \Pcal} \ind{p = y} \ind{y \in [0,2\delta_n]^N},
\]
and $\mu^N$ the uniform measure on the square $[0,2 r_n]^N$. Then
\[
	\Exp{W_1^N(m^N_\Pcal,\mu^N)} = \bigO{\log(n) n^{-1/N}}.
\]
\end{corollary}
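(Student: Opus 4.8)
The plan is to deduce the corollary from Lemma~\ref{lem:couling_discrete_uniform_measure_N_square} by a change of scale. Introduce the dilation $\psi_n \colon [0,2r_n]^N \to [0,1]^N$, $\psi_n(y) = y/(2r_n)$; this is a bijection with $\|\psi_n(y)-\psi_n(y')\| = \|y-y'\|/(2r_n)$. By the mapping theorem for Poisson processes, $\psi_n(\Pcal)$ is a Poisson process on $[0,1]^N$ whose intensity measure is the push-forward of $n\,\dd\vol_N$ under $\psi_n$, namely $k_n\,\dd\vol_N$ with $k_n := n\,(2r_n)^N$ (the factor $(2r_n)^N$ being the Jacobian of $\psi_n^{-1}$). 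Moreover $\psi_n$ maps the empirical measure $m^N_\Pcal$ to the empirical measure of $\psi_n(\Pcal)$, and it maps $\mu^N$ (uniform on $[0,2r_n]^N$) to the uniform measure on $[0,1]^N$. Thus, after this relabelling, the left-hand object is precisely the quantity estimated in Lemma~\ref{lem:couling_discrete_uniform_measure_N_square}, with the intensity parameter there taken to be $k_n$ and with $f_n \equiv 0$; the contribution of the event $\{|\Pcal| = 0\}$ is negligible, exactly as in that lemma's proof.

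Next I would record how the Wasserstein metric transforms under $\psi_n$. For probability measures $\mu_1,\mu_2$ on $[0,2r_n]^N$, the assignment $\pi \mapsto (\psi_n\times\psi_n)_\ast\pi$ is a bijection between couplings of $(\mu_1,\mu_2)$ and couplings of $((\psi_n)_\ast\mu_1,(\psi_n)_\ast\mu_2)$, and under it the transport cost is multiplied by $2r_n$; passing to the infimum yields
\[
	W_1^N(\mu_1,\mu_2) = 2r_n\, W_1^N\big((\psi_n)_\ast\mu_1,(\psi_n)_\ast\mu_2\big).
\]
Applying this with $\mu_1 = m^N_\Pcal$ and $\mu_2 = \mu^N$, and then invoking Lemma~\ref{lem:couling_discrete_uniform_measure_N_square} with parameter $k_n$, gives
\[
	\Exp{W_1^N(m^N_\Pcal,\mu^N)} = 2r_n\,\bigO{\log(k_n)\,k_n^{-1/N}} = \bigO{\log(k_n)\,n^{-1/N}},
\]
because $2r_n\,k_n^{-1/N} = 2r_n\big(n(2r_n)^N\big)^{-1/N} = n^{-1/N}$. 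Since $r_n\to 0$ we have $2r_n < 1$ for all large $n$, so $\log(k_n) = \log n + N\log(2r_n) \le \log n$, and the claimed bound $\bigO{\log(n)\,n^{-1/N}}$ follows.

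There is no serious obstacle here: the argument is the announced rescaling, and the only points requiring care are bookkeeping ones — getting the push-forward intensity $k_n = n(2r_n)^N$ and its Jacobian factor right, and the exact linear scaling $W_1^N \mapsto 2r_n\,W_1^N$. The one genuine caveat is that Lemma~\ref{lem:couling_discrete_uniform_measure_N_square} is asymptotic in its intensity parameter, so to invoke it with parameter $k_n$ one needs $k_n = n(2r_n)^N \to \infty$; this holds in the regime in which the corollary is applied (where $r_n$ is comparable to $\delta_n$ and $n\delta_n^N$ diverges polynomially under the hypotheses of Theorems~\ref{thm:convergence_ollivier_almost_sparse_graphs_weighted} and~\ref{thm:convergence_ollivier_graphs_hopcount}). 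In the complementary range, where $k_n = \bigO{1}$, one simply uses the trivial bound $W_1^N(m^N_\Pcal,\mu^N) \le \operatorname{diam}\big([0,2r_n]^N\big) = 2\sqrt{N}\,r_n = 2\sqrt{N}\,(k_n/n)^{1/N} = \bigO{n^{-1/N}}$, so the estimate holds unconditionally.
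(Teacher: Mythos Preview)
Your proof is correct and follows essentially the same rescaling argument as the paper: map $[0,2r_n]^N$ to $[0,1]^N$ by dilation, push forward the Poisson process to one of intensity $k_n = n(2r_n)^N$, apply Lemma~\ref{lem:couling_discrete_uniform_measure_N_square} with parameter $k_n$, and undo the scaling on the Wasserstein distance. Your bookkeeping is in fact cleaner than the paper's (which has a couple of minor typos in the scaling factors), and your explicit treatment of the caveat $k_n\to\infty$ together with the trivial diameter bound in the complementary range is a useful addition the paper leaves implicit.
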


\begin{proof}
Consider the map $\phi : [0,2 r_n]^N \to [0,1]^N$ defined by $\phi(x) = r_n^{-1} x/2$. Then $\phi(\Pcal)$ is a Poisson Point Process on $[0,1]^N$ with intensity measure $2^N r_n^N n$. Now let $\hat{m}^N_\Pcal = m^N_\Pcal \circ \phi^{-1}$ and $\hat{\mu}^N = \mu^N \circ \phi^{-1}$ denote, respectively, the empirical measure with respect to $\phi(\Pcal)$ and the uniform measure on $[0,1]^N$. It follows from Lemma~\ref{lem:couling_discrete_uniform_measure_N_square} that
\[
	\Exp{W_1^N(\hat{m}^N_\Pcal, \hat{\mu}^N)} = \bigO{\log(n r_n^N) r_n n^{-\frac{1}{N}}} 
	= \bigO{(\log(n) + N \log(r_n)) n^{-\frac{1}{N}} r_n}.
\]
Since for any $x,y \in [0,2 r_n]^N$ we have $d_N(\phi(x),\phi(y)) = 2^{-1} r_n^{-1}d_N(x,y)$ it follows that
\begin{align*}
	\Exp{W_1(m^N_\Pcal, \mu^N} &= 2^{-1} r_n^{-1} \Exp{W_1(\hat{m}^N_\Pcal, \hat{\mu}^N)} \\
	&= \bigO{\left(\log(n) + N \log(r_n)\right) n^{-\frac{1}{N}}}
		= \bigO{\log(n) n^{-1/N}},
\end{align*}
since $r_n \to 0$.
\end{proof}

For our analysis we first extend Corollary~\ref{cor:couling_discrete_uniform_measure_N_square_delta} to $N$-dimensional balls. For this we note that if $m_x^N$ and $\mu_x^N$ denote, respectively, the empirical and uniform measure on the ball $\BallN{x}{\delta_n} \subseteq \R^N$, then
\[
	W_1^N(m_x^N,\mu_x^N) \le W_1^N(m^N, \mu^N),
\]
where $m^N$ and $\mu^N$ are, respectively, the empirical and uniform measure on a cube $[0,2\delta_n]^{N}$. 
It then follows from Corollary~\ref{cor:couling_discrete_uniform_measure_N_square_delta}
\[
	\Exp{W_1(m_x^N,\mu_x^N)} = \bigO{\log(n) n^{-\frac{1}{N}}} = \smallO{\lambda_n} = \smallO{\delta_n^3}.
\]
We thus have the following result:

\begin{proposition}\label{prop:coupling_uniform_discrete_measures_balls}
Let $0 \le f_n \to 0$, $x \in \R^N$ and consider a Poisson process $\Pcal$ with intensity measure $(1+f_n) n \, \dd \vol_N$ on the $N$-dimensional ball $\BallN{x}{\delta_n}$. Let $m_x^N$ denote the empirical measure with respect to $\Pcal$, i.e. 
\[
	m_x^N(y) = \frac{1}{|\Pcal|} \sum_{p \in \Pcal} \ind{p = y},
\]
and $\mu_x^N$ the uniform measure on $\BallN{x}{\delta_n}$. Then
\[
	\Exp{W_1^N(m_x^N,\mu_x^N)} = \smallO{\delta_n^3}.
\]
\end{proposition}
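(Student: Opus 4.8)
The plan is to transfer the unit-cube estimates of Lemma~\ref{lem:couling_discrete_uniform_measure_N_square} and Corollary~\ref{cor:couling_discrete_uniform_measure_N_square_delta} to the ball by realising the uniform measure on $\BallN{x}{\delta_n}$ as the push-forward of the uniform measure on a cube of \emph{equal volume} under a Lipschitz, volume-preserving bijection whose Lipschitz constant does not depend on $\delta_n$. By translation invariance of the Wasserstein metric we may take $x = 0$ and write $B_n = \BallN{0}{\delta_n}$. Let $s_n := \vol_N(B_n)^{1/N} = \bigT{\delta_n}$ and let $Q_n$ be the cube of side $s_n$, so that $\vol_N(Q_n) = \vol_N(B_n)$; note that the circumscribing cube $[0,2\delta_n]^N$ is not usable directly, since restricting both uniform measures to the inscribed ball need not decrease $W_1$ and would cost an extra error of order $\delta_n$. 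Fixing once and for all a reference $L_N$-Lipschitz, Lebesgue-preserving bijection between a cube and a ball of the same volume, and rescaling it by $s_n$ (which changes neither the Lipschitz constant nor the volume-preservation), we obtain a bijection $T_n : Q_n \to B_n$ that is $L_N$-Lipschitz and volume preserving, with $L_N$ depending only on $N$.

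First I would pull the Poisson process back through $T_n$. Since $T_n$ is volume preserving, $\widetilde\Pcal := T_n^{-1}(\Pcal)$ is a Poisson process on $Q_n$ with intensity measure $(1+f_n)n\,\dd\vol_N$; moreover $m_x^N = (T_n)_\#\widehat m_n$ and $\mu_x^N = (T_n)_\#\widehat\mu_n$, where $\widehat m_n$ is the empirical measure of $\widetilde\Pcal$ and $\widehat\mu_n$ the uniform measure on $Q_n$. Applying $T_n\times T_n$ to any coupling and using that $T_n$ is $L_N$-Lipschitz gives
\[
	W_1^N(m_x^N,\mu_x^N) \le L_N\, W_1^N(\widehat m_n,\widehat\mu_n).
\]
To estimate the right-hand side I would repeat the rescaling in the proof of Corollary~\ref{cor:couling_discrete_uniform_measure_N_square_delta}: mapping $Q_n$ affinely onto $[0,1]^N$ turns $\widetilde\Pcal$ into a Poisson process on $[0,1]^N$ of intensity $(1+f_n)\,n s_n^N = (1+f_n)\,\bigT{n\delta_n^N}$. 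The standing hypothesis $\lambda_n = \smallO{\delta_n^3}$ with $\lambda_n = \log(n)^{2/N}n^{-1/N}$ forces $\delta_n \gg \log(n)^{2/(3N)}n^{-1/(3N)}$ and hence $n\delta_n^N \gg \log(n)^{2/3}n^{2/3}\to\infty$, so Lemma~\ref{lem:couling_discrete_uniform_measure_N_square} (which already permits intensities of the form $(1+f_n)m$ with $m\to\infty$) applies and, undoing the rescaling exactly as in Corollary~\ref{cor:couling_discrete_uniform_measure_N_square_delta}, yields $\Exp{W_1^N(\widehat m_n,\widehat\mu_n)} = \bigO{\log(n)n^{-1/N}}$. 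Hence $\Exp{W_1^N(m_x^N,\mu_x^N)} = \bigO{\log(n)n^{-1/N}}$, which is $\smallO{\delta_n^3}$ by the standing assumption $\lambda_n = \smallO{\delta_n^3}$ (for $N = 2$ one has $\log(n)n^{-1/N} = \lambda_n$; the higher-dimensional case is handled identically, using the slack in the hypotheses).

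The only genuinely non-routine ingredient is the existence of the reference Lipschitz, volume-preserving bijection between a cube and a ball of equal volume with a dimensional Lipschitz constant. This is classical --- one can write down an explicit piecewise-smooth such map, or invoke a standard rearrangement/transport construction --- but it is not a one-line fact, so I expect it to be the main obstacle to a fully self-contained write-up. It can also be circumvented entirely: the proof of Proposition~\ref{prop:coupling_uniform_measures_cube} uses only Talagrand's bipartite-matching bounds, which hold verbatim for i.i.d.\ uniform samples on a ball, so one may instead prove the ball analogue of Proposition~\ref{prop:coupling_uniform_measures_cube} directly and then reuse, word for word, the rescaling and the Poisson-count bookkeeping (Chernoff concentration of $|\Pcal|$ and the split into $|\Pcal| < a_n^-$, $a_n^- \le |\Pcal| \le a_n^+$, $|\Pcal| > a_n^+$) from Corollary~\ref{cor:couling_discrete_uniform_measure_N_square_delta} and Lemma~\ref{lem:couling_discrete_uniform_measure_N_square}.
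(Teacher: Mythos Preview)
Your approach is correct and genuinely different from the paper's. The paper does not construct a volume-preserving Lipschitz map; its entire argument for this proposition is the single unjustified inequality
\[
W_1^N(m_x^N,\mu_x^N)\le W_1^N(m^N,\mu^N),
\]
with $m^N,\mu^N$ the empirical and uniform measures on the circumscribing cube $[0,2\delta_n]^N$, followed by Corollary~\ref{cor:couling_discrete_uniform_measure_N_square_delta}. You explicitly flagged this step as ``not usable directly'', and your scepticism is warranted: restriction-plus-renormalisation of both measures to a subset is not a monotone operation for $W_1$, and the paper offers no reason why it should hold here. Your route via an equal-volume cube and an $L_N$-Lipschitz, volume-preserving bijection (which exists, e.g.\ by Caffarelli's regularity for the Brenier map between uniform measures on convex bodies) gives the clean inequality $W_1^N(m_x^N,\mu_x^N)\le L_N\,W_1^N(\widehat m_n,\widehat\mu_n)$ and is on firmer ground; your alternative---reproving Talagrand's matching bound directly for i.i.d.\ uniform samples on the ball and then rerunning the Poisson bookkeeping of Lemma~\ref{lem:couling_discrete_uniform_measure_N_square}---is perhaps the most transparent fix, since Talagrand's argument is not specific to the cube.

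One small point shared by your write-up and the paper: the bound $\bigO{\log(n)\,n^{-1/N}}$ inherited from Lemma~\ref{lem:couling_discrete_uniform_measure_N_square} is \emph{not} $\smallO{\lambda_n}$ for $N\ge 3$ (it exceeds $\lambda_n$ by $\log(n)^{1-2/N}$), so ``$\smallO{\delta_n^3}$ by the standing assumption $\lambda_n=\smallO{\delta_n^3}$'' does not quite close as stated; the paper's chain $\bigO{\log(n)n^{-1/N}}=\smallO{\lambda_n}=\smallO{\delta_n^3}$ has the same defect. The repair is cosmetic: in the proof of Lemma~\ref{lem:couling_discrete_uniform_measure_N_square}, bounding the middle sum by $\sup_k \CExp{W_1}{|\Pcal|=k}\cdot\sum_k\Prob{|\Pcal|=k}$ instead of the mode-times-width estimate already yields $\bigO{\sqrt{\log(n)/n}}$ for $N=2$ and $\bigO{n^{-1/N}}$ for $N\ge 3$, both genuinely $\smallO{\lambda_n}$.
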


\subsubsection{From the manifold to the tangent space and back}

To prove Proposition~\ref{prop:coupling_uniform_discrete_rw} we have to extend Proposition~\ref{prop:coupling_uniform_discrete_measures_balls} to the setting of Riemannian manifolds. For this we use that for $n$ large enough, the ball $\BallM{x}{\delta_n}$ can be mapped diffeomorphically by the exponential map to a slightly larger ball in the tangent space of $x$. Since the tangent space is diffemorphic to $\R^N$ we can use Proposition~\ref{prop:coupling_uniform_discrete_measures_balls} to obtain the result. However, we have to be careful since the exponential map does not preserve the metric. 

\begin{proof}[Proof of Proposition~\ref{prop:coupling_uniform_discrete_rw}]
We shall denote by $\BallN{x}{\delta}$ the ball of radius $\delta$ around $x \in \mathbb{R}^N$, according to the Euclidean metric. Fix a $0 < \xi < 1$ and pick a small enough, but fixed, neighborhood $U$ of the origin in $T_x \Mcal$ such that 1) the exponential map restricted to $U$ is a diffeomorphism, 2) there exists a constant $C > 1$ such that $U \subseteq \BallN{0}{C\delta_n}$ and 3) for any two points $y,z \in \exp(U)$
\[
	(1-\xi) d_N(\exp_x^{-1} y, \exp_x^{-1} z) \le d_\Mcal(y,z) \le (1+\xi) d_N(\exp_x^{-1} y, \exp_x^{-1} z).
\]
In particular, this implies that for $n$ large enough,
\[
	\BallN{0}{\frac{\delta_n}{1 + \xi}}
	\subseteq \exp^{-1}\{\BallM{x}{\delta_n}\} \subseteq \BallN{0}{\frac{\delta_n}{1 - \xi}}\subset U.
\]

Next we note that the probability measures $m_x^\Mcal$ and $\mu_x^{\delta_n}$ on $\BallM{x}{\delta_n}$ only depend on the restriction of the Poisson process to this ball. In particular it only depends on the restriction $\Pcal_U$ of the process to the fixed neighborhood $U$, which is again a Poisson process with intensity $\frac{n \, \dd \vol_\Mcal}{\vol_\Mcal(\Mcal)}$. Since $U \subseteq \BallN{0}{C\delta_n}$ it follows that on $U$, $\vol_\Mcal \, \circ \, \exp_x = (1+\bigO{\delta_n^2}) \vol_N$. Therefore, it follows from the Mapping Theorem for Poisson processes~\cite{last2017lectures} that $\exp_x^{-1}(P_{U})$ is a Poisson process on $\exp_x^{-1}(U)$ with intensity function $(1+\bigO{\delta_n^2}) \, \frac{n \, \dd \vol_N}{\vol_\Mcal(\Mcal)}$.

Slightly abusing notation, let $m_x^{N}$ and $\mu_x^N$ denote respectively the empirical and uniform measure on $\BallN{0}{\frac{\delta_n}{1 - \xi}}$ with respect to the Poisson Point Process $\exp_x^{-1}(\Pcal_U)$. Then, since $\delta_n/(1-\xi) = \bigT{\delta_n}$, Proposition~\ref{prop:coupling_uniform_discrete_measures_balls} implies that
\[
	\Exp{W_1^N(m_x^N,\mu_x^N)} = \smallO{\delta_n^3}.
\]

On the other hand we have, since $\exp_x$ is a diffeomorphism on $U$, that
\[
	\Exp{W_1(m_x^\Mcal,\mu_x^{\delta_n})} \le (1+\xi) \Exp{W_1^N(m_x^N,\mu_x^N)},
\]
and hence we conclude that
\[
	\Exp{W_1(m_x^\Mcal,\mu_x^{\delta_n})} = \smallO{\delta_n^3},
\]
which proves Proposition~\ref{prop:coupling_uniform_discrete_rw}.
\end{proof}

\subsection{Weighted graph distances}\label{sec:weighted_graph_distances}

Recall that $\lambda_n = \log(n)^{\frac{2}{N}}n^{-\frac{1}{N}}$. To prove Proposition~\ref{prop:weighted_graph_distance_delta_approx} we first show the following

\begin{lemma}\label{lem:adjusted_manifold_distance}
Let $Q > 3$, $U = \BallM{x^\ast}{Q\delta_n}$ and define the event
\[
	A_n := \bigcup_{u,v \in U \cap G_n} \left\{\left|d_G^w(u,v) - d_\Mcal(u,v) \right| > d_\Mcal(u,v) \frac{3 \lambda_n}{\varepsilon_n} + 2\lambda_n\right\}.
\]
Then, $\Prob{A_n} = \smallO{\delta_n^3}$, as $n \to \infty$.
\end{lemma}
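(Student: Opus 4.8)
The lower bound is free: since every edge $(a,b)$ of $G_n$ carries weight $d_\Mcal(a,b)$, the triangle inequality for $d_\Mcal$ gives $d_\Mcal(u,v)\le d_G^w(u,v)$ for all $u,v$. So only the upper bound $d_G^w(u,v)-d_\Mcal(u,v)\le d_\Mcal(u,v)\frac{3\lambda_n}{\varepsilon_n}+2\lambda_n$ needs to be shown, and it is trivial when $u=v$. The plan is to establish it \emph{deterministically} on the covering event $C_n$ from Corollary~\ref{cor:covering}; since $\Prob{C_n}=1-\smallO{\delta_n^3}$, this immediately gives $A_n\subseteq C_n^c$ and hence $\Prob{A_n}=\smallO{\delta_n^3}$. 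No probabilistic input beyond Corollary~\ref{cor:covering} is needed: on $C_n$ every point $p\in\Mcal$ has a graph node within $d_\Mcal$-distance $\lambda_n/2$ (if $p\in\BallM{c_t}{\lambda_n/4}$ and $x_t\in G_n$ is the node guaranteed in that ball, then $d_\Mcal(p,x_t)\le\lambda_n/4+\lambda_n/4$), and this covering is global, so no boundary effects arise even though $u,v$ lie near the edge of $U=\BallM{x^\ast}{Q\delta_n}$.

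Fix $u\ne v$ in $U\cap G_n$ and let $\gamma\colon[0,L]\to\Mcal$ be a minimizing unit-speed geodesic from $u$ to $v$ with $L=d_\Mcal(u,v)$ (it exists by completeness, and every subsegment is minimizing, so $d_\Mcal(\gamma(s),\gamma(t))=|s-t|$). Set $\ell_n:=\varepsilon_n-\lambda_n>0$; since $\lambda_n=\smallO{\varepsilon_n}$ we have $\ell_n\ge\varepsilon_n/2$ for large $n$. Discretize $\gamma$ at step $\ell_n$: with $k:=\lceil L/\ell_n\rceil$ and $p_j:=\gamma(\min\{j\ell_n,L\})$ one has $p_0=u$, $p_k=v$, $d_\Mcal(p_{j-1},p_j)\le\ell_n$, and $\sum_{j=1}^k d_\Mcal(p_{j-1},p_j)=L$. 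On $C_n$, choose for each $0<j<k$ a node $x_j\in G_n$ with $d_\Mcal(p_j,x_j)\le\lambda_n/2$, and put $x_0:=u$, $x_k:=v$. Then by the triangle inequality $d_\Mcal(x_{j-1},x_j)\le d_\Mcal(p_{j-1},p_j)+\lambda_n\le\ell_n+\lambda_n=\varepsilon_n$, so every consecutive pair spans an edge and $x_0,\dots,x_k$ is a walk in $G_n$ from $u$ to $v$. Consequently
\[
 d_G^w(u,v)\le\sum_{j=1}^k d_\Mcal(x_{j-1},x_j)\le\sum_{j=1}^k d_\Mcal(p_{j-1},p_j)+k\lambda_n=L+k\lambda_n\le L+\Big(\frac{L}{\ell_n}+1\Big)\lambda_n,
\]
so that, using $\ell_n\ge\varepsilon_n/2$,
\[
 d_G^w(u,v)-d_\Mcal(u,v)\le\frac{L\lambda_n}{\ell_n}+\lambda_n\le d_\Mcal(u,v)\frac{2\lambda_n}{\varepsilon_n}+\lambda_n\le d_\Mcal(u,v)\frac{3\lambda_n}{\varepsilon_n}+2\lambda_n.
\]

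I do not expect a real obstacle: the proof is essentially a careful discretization of geodesics. The delicate points are (i) choosing the step $\ell_n=\varepsilon_n-\lambda_n$ so that perturbing each discretization point to a nearby graph node keeps consecutive nodes within the connection radius $\varepsilon_n$ — this is exactly where $\lambda_n=\smallO{\varepsilon_n}$ enters; (ii) recognizing that the global covering event $C_n$ is the right (and only) probabilistic ingredient, so the error stays $\smallO{\delta_n^3}$ uniformly over all pairs $u,v$; and (iii) the constant bookkeeping, for which the stated $2\lambda_n$ and $3\lambda_n/\varepsilon_n$ leave comfortable slack over the $\lambda_n$ and $2\lambda_n/\varepsilon_n$ actually obtained.
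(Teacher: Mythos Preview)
Your proof is correct and follows essentially the same strategy as the paper: reduce to the covering event $C_n$ of Corollary~\ref{cor:covering}, discretize the geodesic between $u$ and $v$, perturb each sample point to a nearby graph node guaranteed by $C_n$, and bound the resulting path length. The only cosmetic difference is your step size $\ell_n=\varepsilon_n-\lambda_n$ versus the paper's $\varepsilon_n/3$; both choices keep consecutive perturbed nodes within distance $\varepsilon_n$ and yield the stated bound with room to spare.
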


\begin{proof}

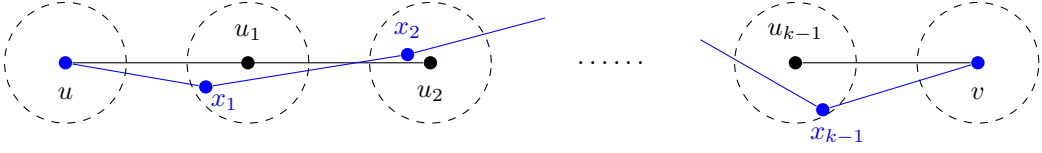
\begin{figure}
\centering
\begin{tikzpicture}[scale=0.8]
	\tikzstyle{vertex}=[fill, circle, inner sep=0pt, minimum size=5pt]
	\tikzstyle{edge}=[color=black]
	
	\draw node[vertex,blue] (u) at (0,0) {};
	\draw node[vertex] (z1) at (3,0) {};
	\draw node[vertex] (z2) at (6,0) {};
	\draw node[vertex] (z4) at (12,0) {};
	\draw node[vertex,blue] (v) at (15,0) {};
	
	\path (z1)+(210:0.8) node[vertex,blue] (x1) {}; 
	\path (z2)+(160:0.4) node[vertex,blue] (x2) {}; 
	\path (x2)+(15:2.5) node (x31) {};
	
	\path (z4)+(300:0.9) node[vertex,blue] (x4) {};
	\path (x4)+(150:2.5) node (x32) {};
	
	\path (u)+(270:0.5) node {$u$};
	\path (z1)+(90:0.5) node {$u_1$};
	\path (z2)+(270:0.5) node {$u_2$};
	\path (z4)+(90:0.5) node {$u_{k-1}$};
	\path (v)+(270:0.5) node {$v$};

	\path (x1)+(320:0.4) node {\color{blue} $x_1$};
	\path (x2)+(90:0.4) node {\color{blue} $x_2$};
	\path (x4)+(300:0.5) node {\color{blue} $x_{k-1}$};
	
	\draw (u) -- (z2);
	\draw node at (9,0) {$\dots\dots$};
	\draw (z4) -- (v);
	\draw[blue] (u) -- (x1) -- (x2) -- (x31);
	\draw[blue] (x32) -- (x4) -- (v);
	
	\draw[dashed] (u) circle  (1cm);
	\draw[dashed] (z1) circle  (1cm);
	\draw[dashed] (z2) circle  (1cm);
	\draw[dashed] (z4) circle  (1cm);
	\draw[dashed] (v) circle  (1cm);

\end{tikzpicture}
\caption{Depiction of the splitting of the geodesic between $u$ and $v$ in $k$ equal segments.}
\label{fig:partition_geodesic_uv}
\end{figure}
The proof closely follows the strategy of the proof of Lemma~\ref{lem:extended_distance_metric_space}. Let $C_n$ denote the event in Corollary~\ref{cor:covering}. We will show that on this event,
\[
	\left|d_G^w(u,v) - d_\Mcal(u,v) \right| \le \frac{3 d_\Mcal(u,v) \lambda_n}{\varepsilon_n} + 2\lambda_n,
\]
for all $u,v \in U \cap G_n$. This then implies that $\Prob{A_n, C_n} = 0$ from which the results follows, since by Corollary~\ref{cor:covering}
\[
	\Prob{A_n} \le \Prob{A_n, C_n} + (1-\Prob{C_n}) = \smallO{\delta_n^3}.  
\]

Take any two $u,v \in U \cap G_n$ and let $\gamma(u,v)$ denote the geodesic between $u$ and $v$. We then partition this geodesic into 
\[
	k = \left\lceil \frac{3d_\Mcal(u,v)}{\varepsilon_n} \right\rceil \le \frac{3d_\Mcal(u,v)}{\varepsilon_n} + 1.
\]
pieces of equal length and let $u := u_0, u_1, \dots, u_{k-1}, u_k := v$ denote the $k + 1$ endpoints of the intervals, see Figure~\ref{fig:partition_geodesic_uv}. On the event $C_n$, each $u_t$ belongs to some ball $B_t$ of radius $ \lambda_n/4$ which contains a vertex $x_t \in G$, where we can take $x_0 = u$ and $x_k = v$. In particular, since $d_\Mcal(u_t,x_t) \le \lambda_n/2$, $d_\Mcal(u_{t-1},u_{t}) \le \varepsilon_n/3$ and $\lambda_n = \smallO{\varepsilon_n}$, it follows that for large enough $n$, 
\[
	d_\Mcal(x_t,x_{t+1}) \le d_\Mcal(u_t,x_t) + d_\Mcal(u_{t+1},x_{t+1}) + d_\Mcal(u_t,u_{t+1}) 
	\le \lambda_n + \frac{\varepsilon_n}{3} \le \varepsilon_n
\] 
so that $\{u, x_1, \dots, x_k,v\}$ is a path in $G_n$ (see Figure~\ref{fig:partition_geodesic_uv}). Moreover, $d_{G}^w(x_t,x_{t+1}) \le d_\Mcal(u_t,u_{t+1}) + \lambda_n$ by the triangle inequality. Therefore,
\begin{align*}
	d_G^w(u,v) &\le \sum_{t = 0}^{k-1} d_{G}^w(x_t, x_{t+1})
		\le \sum_{t = 0}^{k-1} \left(d_\Mcal(u_t, u_{t+1}) + \lambda_n \right)\\
	&\le d_\Mcal(u,v) + k \lambda_n
		\le d_\Mcal(u,v)\left(1 + \frac{3 \lambda_n}{\varepsilon_n}\right) + \lambda_n.
\end{align*}
To finish the proof we note that by definition $d_G^w(u,v) \ge d_\Mcal(u,v)$ and hence
\[
	\left|d_G^w(u,v) - d_\Mcal(u,v)\right|
	= d_G^w(u,v) - d_\Mcal(u,v)
	\le \frac{3 d_\Mcal(u,v) \lambda_n}{\varepsilon_n} + 2\lambda_n.
\]
\end{proof}

\begin{proof}[Proof of Proposition~\ref{prop:weighted_graph_distance_delta_approx}]
Due to Lemma~\ref{lem:adjusted_manifold_distance} it suffices to show that the conditions on $\varepsilon_n$ and $\delta_n$ imply $\frac{\lambda_n}{\varepsilon_n} = \smallO{\delta_n^2}$.

We compute that
\[
	\frac{\lambda_n}{\varepsilon_n \delta_n^2} = \bigT{\log(n)^{\frac{2}{N} - a - 2b} n^{\alpha + 2\beta-\frac{1}{N}}}.
\]
The latter is $\smallO{1}$ precisely when either $\alpha + 2\beta < \frac{1}{N}$ or $\alpha + 2\beta = \frac{1}{N}$ and $a + 2b > \frac{2}{N}$, which are the conditions of Proposition~\ref{prop:weighted_graph_distance_delta_approx}.

Thus, under the conditions of Proposition~\ref{prop:weighted_graph_distance_delta_approx}
it holds that the manifold-weighted graph distance $d_G^w$ is a $\delta_n$-good approximation with $\xi_n = \max\{\sqrt{\lambda_n/\varepsilon_n}, \lambda_n^{1/3}\}$.
\end{proof}

\subsection{Rescaled graph distances}\label{sec:almost_sparse_graphs}

Consider the $2$-dimensional Euclidean space equipped with the Euclidean distance $d_2$. Let $\Ccal = [0,1]^2$ and take $G_n = \mathbb{G}_n(\varepsilon)$ to be the random geometric graph on $\Ccal$ with connection radius $\varepsilon$. The main result in~\cite{diaz2016relation} relates the shortest-path distance $d_{G_n}^s$ and the Euclidean distance $d_2$. We state a version of this result here, which includes the error bounds that follow from Proposition~2.2 and Proposition~2.4 in~\cite{diaz2016relation}.

\begin{theorem}[Theorem 1.1 from~\cite{diaz2016relation}]\label{thm:graph_distance_vs_euclidean}
Consider the random geometric graph $G_n$ on the unit square $[0,1]^2$ with connection radius $\varepsilon_n = \smallO{1}$. Then for any pair of vertices $x,y \in G_n$ with $d_2(x,y) > \varepsilon_n$, the following holds:
\begin{enumerate}
\item If $d_2(x,y) \ge \max\left\{\frac{12 \log(n)^{3/2}}{n \varepsilon_n}, 21 \varepsilon_n \log(n)\right\}$, then
\[
	\Prob{d_G^s(x,y) \ge \left\lfloor\frac{d_2(x,y)}{\varepsilon_n}\left(1 + \frac{1}{2(n \varepsilon_n d_2(x,y))^{2/3}}\right) \right\rfloor} \ge 1 - \smallO{n^{-5/2}}. 
\]
\item If $\varepsilon_n \ge 224 \sqrt{\log(n)/n}$ then
\[
	\Prob{d_G^s(x,y) \le \left\lceil \frac{d_2(x,y)}{\varepsilon_n}\left(1 + \gamma_n\right) \right\rceil}
	\ge 1 - \smallO{n^{-5/2}}
\]
with
\[
	\gamma_n := \max\left\{1358 \left(\frac{3 \log(n)}{n\varepsilon_n^2 + n \varepsilon_n d_2(x,y)}\right)^{2/3}
	\hspace{-10pt}, \quad
	\frac{4 \times 10^6 \log(n)^2}{n^2 \varepsilon_n^4}, \quad
	\left(\frac{30000}{n \varepsilon_n^2}\right)^{2/3}
	\right\}.
\]
\end{enumerate}
\end{theorem}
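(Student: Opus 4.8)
The statement here is, as the surrounding text says, a quantitatively explicit repackaging of~\cite[Theorem~1.1]{diaz2016relation}, with the error rates $\tfrac{1}{2}(n\varepsilon_n d_2(x,y))^{-2/3}$ and $\gamma_n$ read off from~\cite[Propositions~2.2 and~2.4]{diaz2016relation}; so at the level of this paper the proof is really a pointer together with the bookkeeping needed to extract those constants and match hypotheses. Still, here is how I would argue each direction from scratch.

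For the lower bound in part~(i), the starting point is the trivial observation that every edge of $G_n$ joins points at Euclidean distance at most $\varepsilon_n$, so a path with $k$ hops has Euclidean displacement at most $k\varepsilon_n$ and hence $d_G^s(x,y)\ge d_2(x,y)/\varepsilon_n$. To gain the extra factor $1+\tfrac{1}{2}(n\varepsilon_n d_2(x,y))^{-2/3}$ one must rule out paths that chain hops which are simultaneously nearly of length $\varepsilon_n$ and nearly parallel to the segment $[x,y]$. I would slice a thin tube around $[x,y]$ into consecutive cross-sections and argue that, because the Poisson intensity is only $n$ and the tube is thin, with overwhelming probability some cross-section is depleted enough to force an extra hop; optimising the tube width against the slice length produces the cube-root exponent, and a union bound over the $O(n^2)$ vertex pairs (costing only $O(\log n)$ in the exponent) upgrades this to probability $1-o(n^{-5/2})$.

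For the upper bound in part~(ii) I would build an explicit path. Place reference points along $[x,y]$ (or along a slightly perturbed polyline hugging it) spaced slightly below $\varepsilon_n$ apart, surround each by a small ball, and note that under the density hypothesis $\varepsilon_n\ge 224\sqrt{\log n/n}$ each such ball contains a Poisson point with probability $1-o(n^{-7/2})$, while points in consecutive balls are automatically within $\varepsilon_n$ and therefore adjacent. The hop count is then $d_2(x,y)/\varepsilon_n$ inflated by the detour one is forced to make because the Poisson points do not sit exactly on the line; quantifying that detour is precisely where the three regimes of $\gamma_n$ appear --- the $(\log n/(n\varepsilon_n^2+n\varepsilon_n d_2(x,y)))^{2/3}$ term from the typical cube-root displacement needed to reach a point, the $(30000/(n\varepsilon_n^2))^{2/3}$ term from the same effect near the endpoints, and the $\log^2 n/(n^2\varepsilon_n^4)$ term from the short-range regime where $d_2(x,y)$ is close to $\varepsilon_n$ --- and another union bound over pairs again yields $1-o(n^{-5/2})$.

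The main obstacle is not the existence of matching $1+o(1)$ bounds: those follow from the elementary inequality $k\varepsilon_n\ge d_2(x,y)$ together with a crude corridor construction. The difficulty is in pinning down the sharp $2/3$ deviation exponent and the explicit constants in the error terms, which requires the delicate multi-scale concentration estimates of~\cite[Prop.~2.2, 2.4]{diaz2016relation}. In this paper those are simply imported, so the only work left is to verify that the hypotheses $d_2(x,y)>\varepsilon_n$, $d_2(x,y)\ge\max\{12\log(n)^{3/2}/(n\varepsilon_n),\,21\varepsilon_n\log n\}$, and $\varepsilon_n\ge 224\sqrt{\log n/n}$ are exactly the regimes under which those propositions hold, and to record the resulting constants.
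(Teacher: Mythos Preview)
Your assessment is correct: the paper gives no proof of this statement at all --- it is simply quoted from~\cite{diaz2016relation}, with the explicit error terms read off from Propositions~2.2 and~2.4 there, exactly as you say. Your sketch of the corridor/tube arguments for the two directions is a reasonable outline of the ideas behind the original proof, but none of that belongs to the present paper; here the ``proof'' really is just the citation.
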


From this we obtain the following result, which gives bounds on the graph distance $\varepsilon_n d_G^s$ in terms of the manifold distance, between two nodes of the graph $G_n$ that are within manifold distance $\bigO{\delta_n}$.

\begin{lemma}\label{lem:shortest_path_distance}
Let $\varepsilon_n \ge 244 \sqrt{\log(n)/n}$, $Q > 3$, $U = \BallM{x^\ast}{Q\delta_n}$ and define the event
\[
	A_n := \bigcup_{u,v \in U \cap G_n} \left\{\left|\varepsilon_n d_G^s(u,v) - d_\Mcal(u,v) \right| > d_\Mcal(u,v)\gamma_n + \varepsilon_n \right\}.
\]
Then, $\Prob{A_n} = \smallO{\delta_n^3}$, as $n \to \infty$.
\end{lemma}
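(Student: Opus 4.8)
The plan is to follow the template of the proof of Lemma~\ref{lem:adjusted_manifold_distance}, replacing the crude covering bound used there by the sharp estimate of Theorem~\ref{thm:graph_distance_vs_euclidean}. First I would split the event inside $A_n$ into its two one-sided deviations. The lower deviation $d_\Mcal(u,v) - \varepsilon_n d_G^s(u,v)$ is purely deterministic: every edge of $G_n$ joins two points at manifold distance at most $\varepsilon_n$, so by the triangle inequality on $\Mcal$ a $k$-hop path has endpoints within manifold distance $k\varepsilon_n$; hence $d_G^s(u,v) \ge d_\Mcal(u,v)/\varepsilon_n$, i.e.\ $\varepsilon_n d_G^s(u,v) \ge d_\Mcal(u,v)$, which is already stronger than the lemma requires on that side (in particular only part~2 of Theorem~\ref{thm:graph_distance_vs_euclidean} is needed). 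Moreover, if $d_\Mcal(u,v) \le \varepsilon_n$ then $u$ and $v$ are adjacent, so $\varepsilon_n d_G^s(u,v) = \varepsilon_n$ and the inequality is immediate. It thus remains to bound the upper deviation for pairs with $d_\Mcal(u,v) > \varepsilon_n$.

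For those pairs I would pass to flat space. Fix once and for all a compact neighbourhood of the origin in $T_{x^\ast}\Mcal$ on which $\exp_{x^\ast}$ is a diffeomorphism with metric distortion $1 + \bigO{\delta_n^2}$; since the geodesic joining any $u,v \in \BallM{x^\ast}{Q\delta_n}$ stays inside $\BallM{x^\ast}{3Q\delta_n}$, for large $n$ this neighbourhood contains $\exp_{x^\ast}^{-1}(\BallM{x^\ast}{3Q\delta_n})$, where the image of the Poisson process is again Poisson with intensity $(1+\bigO{\delta_n^2})\,n/\vol_\Mcal(\Mcal)$. Because an edge of $G_n$ corresponds to Euclidean distance at most $\varepsilon_n(1+\bigO{\delta_n^2})$ in these coordinates while any pair at Euclidean distance at most $\varepsilon_n^- := \varepsilon_n(1-c\delta_n^2)$ is certainly an edge (for a suitable constant $c>0$), the image graph contains the Euclidean random geometric graph $H_n^-$ of radius $\varepsilon_n^-$ on the same vertex set, so $d_G^s(u,v) \le d_{H_n^-}^s(\tilde u,\tilde v)$ with $\tilde u = \exp_{x^\ast}^{-1}(u)$, $\tilde v = \exp_{x^\ast}^{-1}(v)$. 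After rescaling $\exp_{x^\ast}^{-1}(\BallM{x^\ast}{3Q\delta_n})$ by a factor $\bigT{\delta_n^{-1}}$ to a domain of unit order, the hypothesis $\varepsilon_n \ge 244\sqrt{\log(n)/n}$ (the surplus over the constant $224$ of Theorem~\ref{thm:graph_distance_vs_euclidean} absorbing the rescaling and the $(1\pm\bigO{\delta_n^2})$ corrections) allows part~2 of Theorem~\ref{thm:graph_distance_vs_euclidean} to be applied to $H_n^-$: for a fixed such pair it gives $d_G^s(u,v) \le \lceil (d_2(\tilde u,\tilde v)/\varepsilon_n^-)(1+\gamma_n)\rceil$ with probability $1-\smallO{n^{-5/2}}$. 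Unwinding the ceiling, the radius perturbation $\varepsilon_n/\varepsilon_n^- = 1+\bigO{\delta_n^2}$, and $d_2(\tilde u,\tilde v) = d_\Mcal(u,v)(1+\bigO{\delta_n^2})$ then give $\varepsilon_n d_G^s(u,v) \le d_\Mcal(u,v)(1+\gamma_n) + \varepsilon_n$ once the residual $\bigO{\delta_n^2}$ contributions are checked to be of lower order than the right-hand side — this is exactly where the constraints on $a,b,\alpha,\beta$ enter (they are the ones that make $\gamma_n = \smallO{\delta_n^2}$ and $\varepsilon_n = \smallO{\delta_n^3}$).

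To promote the fixed-pair statement to all pairs in $U \cap G_n$, I would condition on the high-probability event (as in Lemma~\ref{lem:non_empty_varepsilon_balls} and Corollary~\ref{cor:covering}) that the total number of Poisson points is $\bigT{n}$, so there are $\bigO{n^2}$ pairs of vertices; using, for pairs of Poisson points, that the conditional law of a Poisson process given the locations of two of its points is again Poisson, the per-pair failure probability $\smallO{n^{-5/2}}$ is preserved, and a union bound gives $\Prob{A_n} = \smallO{n^{-1/2}}$. Since $\beta \le 1/9 < 1/6$, we have $\delta_n^3 = \bigT{\log(n)^{3b}n^{-3\beta}} \gg n^{-1/2}$, hence $\Prob{A_n} = \smallO{\delta_n^3}$, as required.

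I expect the main obstacle to be the bookkeeping in the second step: one must control simultaneously the rounding in Theorem~\ref{thm:graph_distance_vs_euclidean}, the perturbation of the connection radius needed for the sandwich, and the curvature-induced distortion of $\exp_{x^\ast}$, and verify that under $3\beta \le \alpha \le (1-3\beta)/2$ together with the boundary conditions $a<3b$ (when $\alpha=3\beta$) and $2a+3b>1$ (when $\alpha=(1-3\beta)/2$) each of these error pieces fits inside $d_\Mcal(u,v)\gamma_n + \varepsilon_n$. The restriction to $N=2$ is inherited from Theorem~\ref{thm:graph_distance_vs_euclidean}, and peeling off the case $d_\Mcal(u,v)\le\varepsilon_n$ is forced by the hypothesis $d_2>\varepsilon_n$ in that theorem.
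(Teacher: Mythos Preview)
Your approach is essentially the same as the paper's: reduce to the Euclidean setting via the exponential map, apply part~2 of Theorem~\ref{thm:graph_distance_vs_euclidean} to each pair, and then union-bound over pairs. The paper's proof is much terser---it simply asserts that passing to $\mathbb{R}^2$ ``affects the distances at most by a constant factor'' and then applies the theorem directly, whereas you track the $1+\bigO{\delta_n^2}$ distortion, the radius sandwich $\varepsilon_n^\pm$, the deterministic lower bound $\varepsilon_n d_G^s \ge d_\Mcal$, and the adjacent case $d_\Mcal \le \varepsilon_n$ explicitly; these are points the paper glosses over. The only structural difference is in the union bound: the paper conditions on $|U\cap G_n|$ (whose expectation is $\Theta(n\delta_n^2)$) and obtains $\bigO{n^{-3/2}\delta_n^2}$, while you bound over all $\bigO{n^2}$ vertex pairs to get $\smallO{n^{-1/2}}$---coarser, but still $\smallO{\delta_n^3}$ under $\beta \le 1/9$.
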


\begin{proof}
Note that since the the neighborhood $U$ is shrinking as $n$ increases we can map it to $\mathbb{R}^2$ diffeomorphically for sufficiently large $n$. This affects the distances at most by a constant factor and hence it suffices to prove the statement for $\Mcal = \mathbb{R}^2$.

By the second statement of Theorem~\ref{thm:graph_distance_vs_euclidean} we have that for any two $u,v \in U \cap G_n$
\[
	\Prob{\left|\varepsilon_n d_G^s(u,v) - d_\Mcal(u,v) \right| > d_\Mcal(u,v)\gamma_n + \varepsilon_n} = \smallO{n^{-\frac{5}{2}}}.
\]
By conditioning on the number of nodes in $U$ ($|U \cap G_n|$) and applying the union bound we get
\[
	\CProb{A_n}{|U \cap G_n|} \le |U \cap G_n| \smallO{n^{-\frac{5}{2}}}.
\]
Now $\Exp{|U \cap G_n|} = \bigT{n \delta_n^2}$ and therefore 
\begin{align*}
	\Prob{A_n} &= \Exp{\CProb{A_n}{|U \cap G_n|}} \le \bigO{n^{-\frac{3}{2}} \delta_n^2} = \smallO{\delta_n^3},
\end{align*}
where we used that $n^{-\frac{3}{2}} = \smallO{\delta_n}$ for all $\delta_n = \bigT{\log(n)^b n^{-\beta}}$ and $\beta \le 1$.
\end{proof}

We can now proof Proposition~\ref{prop:shortest_graph_distance_delta_approx}.

\begin{proof}[Proof of Proposition~\ref{prop:shortest_graph_distance_delta_approx}]
First observe that $\varepsilon_n d_G^s(u,v) \ge d_\Mcal(u,v)$ for all $u,v \in \BallM{x^\ast}{Q \delta_n}$. Moreover, the conditions of the proposition imply that $\log(n)^{1/2} n^{-\frac{1}{2}} = \smallO{\varepsilon_n}$. Therefore, by Lemma~\ref{lem:shortest_path_distance} we have that with probability $1 - \smallO{\delta_n^3}$,
\[
	\left|\varepsilon_n d_G^s(u,v) - d_\Mcal(u,v) \right| \le d_\Mcal(u,v)\gamma_n + \varepsilon_n
\]
for all $u,v \in \BallM{x^\ast}{Q\delta_n} \cap G_n$. Moreover, since by assumption $\alpha \ge 3\beta$ and $a < 3b$ if $\alpha = 3\beta$ it follows that $\varepsilon_n = \smallO{\delta_n^3}$. Thus, to prove Proposition~\ref{prop:shortest_graph_distance_delta_approx} it remains to show that $\gamma_n = \smallO{\delta_n^2}$. 

Since $\gamma_n$ is the maximum of three terms
\begin{align*}
	1358 \left(\frac{3 \log(n)}{n\varepsilon_n^2 + n \varepsilon_n d_2(x,y)}\right)^{2/3} \hspace{-10pt}, \quad
	\frac{4 \times 10^6 \log(n)^2}{n^2 \varepsilon_n^4} \quad \text{and} \quad
	\left(\frac{30000}{n \varepsilon_n^2}\right)^{2/3} \hspace{-10pt}.
\end{align*}
We will show that each of them is $\smallO{\delta_n^2}$.

For the first term it suffices to show that $\log(n) n^{-1} \varepsilon_n^{-2} = \smallO{\delta_n^3}$. This follows since
\[
	\log(n) n^{-1} \varepsilon_n^{-2} \delta_n^{-3} = \bigO{\log(n)^{1 -2a - 3b} n^{-(1 - 2\alpha - 3\beta)}}
\]
which is $\smallO{1}$ by the assumption that $2\alpha + 3\beta \le 1$ and $2a + 3b > 1$ if $2\alpha + 3\beta = 1$. We now immediately have that $\left(\log(n) n^{-1} \varepsilon_n^{-2}\right)^2 = \smallO{\delta_n^6}$, which proves that the second term is $\smallO{\delta_n^2}$. Finally, the result for the third term follows from $n^{-1} \varepsilon_n^{-2} = \smallO{\log(n) n^{-1} \varepsilon_n^{-2}} = \smallO{\delta_n^3}$.
\end{proof}

\subsection*{Acknowledgments}

We thank J.~Jost and R.~Loll for useful discussions, suggestions, and comments.
This work was supported by ARO Grant Nos.~W911NF-16-1-0391 and W911NF-17-1-0491, and by NSF Grant Nos.~IIS-1741355 and DMS-1800738. 

\bibliographystyle{abbrvnat}
\bibliography{references}

\begin{thebibliography}{41}
\providecommand{\natexlab}[1]{#1}
\providecommand{\url}[1]{\texttt{#1}}
\expandafter\ifx\csname urlstyle\endcsname\relax
  \providecommand{\doi}[1]{doi: #1}\else
  \providecommand{\doi}{doi: \begingroup \urlstyle{rm}\Url}\fi

\bibitem[Ache and Warren(2019)]{ache2019ricci}
A.~G. Ache and M.~W. Warren.
\newblock Ricci curvature and the manifold learning problem.
\newblock \emph{Advances in Mathematics}, 342:\penalty0 14--66, 2019.
\newblock \doi{10.1016/j.aim.2018.11.001}.

\bibitem[Belenchia et~al.(2016)Belenchia, Benincasa, and
  Dowker]{belenchia2016continuum}
A.~Belenchia, D.~M.~T. Benincasa, and F.~Dowker.
\newblock The continuum limit of a 4-dimensional causal set scalar
  d'{A}lembertian.
\newblock \emph{Classical and Quantum Gravity}, 33\penalty0 (24):\penalty0
  245018, 2016.
\newblock \doi{10.1088/0264-9381/33/24/245018}.

\bibitem[Benincasa and Dowker(2010)]{benincasa2010scalar}
D.~M.~T. Benincasa and F.~Dowker.
\newblock Scalar curvature of a causal set.
\newblock \emph{Physical Review Letters}, 104\penalty0 (18):\penalty0 181301,
  2010.
\newblock \doi{10.1103/physrevlett.104.181301}.

\bibitem[Bhattacharya and Mukherjee(2015)]{bhattacharya2015exact}
B.~B. Bhattacharya and S.~Mukherjee.
\newblock Exact and asymptotic results on coarse {R}icci curvature of graphs.
\newblock \emph{Discrete Mathematics}, 338\penalty0 (1):\penalty0 23--42, 2015.
\newblock \doi{10.1016/j.disc.2014.08.012}.

\bibitem[Bringmann et~al.(2019)Bringmann, Keusch, and
  Lengler]{bringmann2019geometric}
K.~Bringmann, R.~Keusch, and J.~Lengler.
\newblock Geometric inhomogeneous random graphs.
\newblock \emph{Theoretical Computer Science}, 760:\penalty0 35--54, 2019.
\newblock \doi{10.1016/j.tcs.2018.08.014}.

\bibitem[Cheeger et~al.(1984)Cheeger, M{\"u}ller, and
  Schrader]{cheeger1984curvature}
J.~Cheeger, W.~M{\"u}ller, and R.~Schrader.
\newblock On the curvature of piecewise flat spaces.
\newblock \emph{Communications in mathematical Physics}, 92\penalty0
  (3):\penalty0 405--454, 1984.
\newblock \doi{10.1007/bf01210729}.

\bibitem[Cunningham and Surya(2020)]{cunningham2020dimensionally}
W.~J. Cunningham and S.~Surya.
\newblock Dimensionally restricted causal set quantum gravity: examples in two
  and three dimensions.
\newblock \emph{Classical and Quantum Gravity}, 37\penalty0 (5):\penalty0
  054002, 2020.
\newblock \doi{10.1088/1361-6382/ab60b7}.

\bibitem[Cushing and Kamtue(2019)]{cushing2019long}
D.~Cushing and S.~Kamtue.
\newblock Long-scale {O}llivier {R}icci curvature of graphs.
\newblock \emph{Analysis and Geometry in Metric Spaces}, 7\penalty0
  (1):\penalty0 22--44, 2019.
\newblock \doi{10.1515/agms-2019-0003}.

\bibitem[D{\'\i}az et~al.(2016)D{\'\i}az, Mitsche, Perarnau, and
  P{\'e}rez-Gim{\'e}nez]{diaz2016relation}
J.~D{\'\i}az, D.~Mitsche, G.~Perarnau, and X.~P{\'e}rez-Gim{\'e}nez.
\newblock On the relation between graph distance and {E}uclidean distance in
  random geometric graphs.
\newblock \emph{Advances in Applied Probability}, 48\penalty0 (3):\penalty0
  848--864, 2016.
\newblock \doi{10.1017/apr.2016.31}.

\bibitem[Farooq et~al.(2019)Farooq, Chen, Georgiou, Tannenbaum, and
  Lenglet]{farooq2019network}
H.~Farooq, Y.~Chen, T.~T. Georgiou, A.~Tannenbaum, and C.~Lenglet.
\newblock Network curvature as a hallmark of brain structural connectivity.
\newblock \emph{Nature communications}, 10\penalty0 (1):\penalty0 1--11, 2019.
\newblock \doi{10.1038/s41467-019-12915-x}.

\bibitem[Forman(2003)]{forman2003bochner}
R.~Forman.
\newblock Bochner's method for cell complexes and combinatorial {R}icci
  curvature.
\newblock \emph{Discrete and Computational Geometry}, 29\penalty0 (3):\penalty0
  323--374, 2003.
\newblock \doi{10.1007/s00454-002-0743-x}.

\bibitem[Gu et~al.(2018)Gu, Sala, Gunel, and R{\'e}]{gu2018learning}
A.~Gu, F.~Sala, B.~Gunel, and C.~R{\'e}.
\newblock Learning mixed-curvature representations in product spaces.
\newblock In \emph{International Conference on Learning Representations}, 2018.
\newblock URL \url{https://openreview.net/pdf?id=HJxeWnCcF7}.

\bibitem[{\bibord{Hoorn}{van der Hoorn}} et~al.(2020){\bibord{Hoorn}{van der
  Hoorn}}, Cunningham, Lippner, Trugenberger, and Krioukov]{hoorn2020ollivier}
P.~{\bibord{Hoorn}{van der Hoorn}}, W.~J. Cunningham, G.~Lippner,
  C.~Trugenberger, and D.~Krioukov.
\newblock Ollivier-{R}icci curvature convergence in random geometric graphs.
\newblock \emph{arXiv:2008.01209}, 2020.
\newblock URL \url{https://arxiv.org/abs/2008.01209}.

\bibitem[Jacob and M{\"o}rters(2015)]{jacob2015spatial}
E.~Jacob and P.~M{\"o}rters.
\newblock Spatial preferential attachment networks: Power laws and clustering
  coefficients.
\newblock \emph{The Annals of Applied Probability}, 25\penalty0 (2):\penalty0
  632--662, 2015.
\newblock \doi{10.1214/14-aap1006}.

\bibitem[Jost(2009)]{jost2009geometry}
J.~Jost.
\newblock \emph{Geometry and physics}.
\newblock Springer Science \& Business Media, 2009.
\newblock \doi{10.1007/978-3-642-00541-1}.

\bibitem[Jost and Liu(2014)]{jost2014ollivier}
J.~Jost and S.~Liu.
\newblock Ollivier’s {R}icci curvature, local clustering and
  curvature-dimension inequalities on graphs.
\newblock \emph{Discrete and Computational Geometry}, 51\penalty0 (2):\penalty0
  300--322, 2014.
\newblock \doi{10.1007/s00454-013-9558-1}.

\bibitem[Kempton et~al.(2019)Kempton, Lippner, and Munch]{kempton2019large}
M.~Kempton, G.~Lippner, and F.~Munch.
\newblock Large scale {R}icci curvature on graphs.
\newblock \emph{arXiv:1906.06222}, 2019.
\newblock URL \url{https://arxiv.org/abs/1906.06222}.

\bibitem[Klitgaard and Loll(2018)]{klitgaard2018introducing}
N.~Klitgaard and R.~Loll.
\newblock Introducing quantum {R}icci curvature.
\newblock \emph{Physical Review D}, 97\penalty0 (4):\penalty0 046008, 2018.
\newblock \doi{10.1103/physrevd.97.046008}.

\bibitem[Krioukov(2016)]{krioukov2016clustering}
D.~Krioukov.
\newblock Clustering implies geometry in networks.
\newblock \emph{Physical Review Letters}, 116\penalty0 (20):\penalty0 208302,
  2016.
\newblock \doi{10.1103/physrevlett.116.208302}.

\bibitem[Krioukov et~al.(2010)Krioukov, Papadopoulos, Kitsak, Vahdat, and
  Bogun{\'a}]{krioukov2010hyperbolic}
D.~Krioukov, F.~Papadopoulos, M.~Kitsak, A.~Vahdat, and M.~Bogun{\'a}.
\newblock Hyperbolic geometry of complex networks.
\newblock \emph{Physical Review E}, 82\penalty0 (3):\penalty0 036106, 2010.
\newblock \doi{10.1103/physreve.82.036106}.

\bibitem[Last and Penrose(2017)]{last2017lectures}
G.~Last and M.~Penrose.
\newblock \emph{Lectures on the {P}oisson process}.
\newblock Cambridge University Press, 2017.
\newblock \doi{10.1017/9781316104477}.

\bibitem[Leighton and Shor(1986)]{leighton1986tight}
F.~T. Leighton and P.~W. Shor.
\newblock Tight bounds for minimax grid matching, with applications to the
  average case analysis of algorithms.
\newblock In \emph{Proceedings of the eighteenth annual ACM symposium on Theory
  of computing}, pages 91--103. ACM, 1986.
\newblock \doi{10.1007/bf02124678}.

\bibitem[Lin et~al.(2011)Lin, Lu, and Yau]{lin2011ricci}
Y.~Lin, L.~Lu, and S.-T. Yau.
\newblock Ricci curvature of graphs.
\newblock \emph{Tohoku Mathematical Journal, Second Series}, 63\penalty0
  (4):\penalty0 605--627, 2011.
\newblock \doi{10.2748/tmj/1325886283}.

\bibitem[Liu et~al.(2018)Liu, M{\"u}nch, and Peyerimhoff]{liu2018bakry}
S.~Liu, F.~M{\"u}nch, and N.~Peyerimhoff.
\newblock {Bakry--{\'E}mery curvature and diameter bounds on graphs}.
\newblock \emph{Calculus of Variations and Partial Differential Equations},
  57\penalty0 (2):\penalty0 67, 2018.
\newblock \doi{10.1007/s00526-018-1334-x}.

\bibitem[Najman and Romon(2017)]{najman2017modern}
L.~Najman and P.~Romon.
\newblock \emph{Modern approaches to discrete curvature}, volume 2184 of
  \emph{Lecture Notes in Mathematics}.
\newblock Springer, 2017.
\newblock \doi{10.1007/978-3-319-58002-9}.

\bibitem[Ni et~al.(2015)Ni, Lin, Gao, Gu, and Saucan]{ni2015ricci}
C.-C. Ni, Y.-Y. Lin, J.~Gao, X.~D. Gu, and E.~Saucan.
\newblock Ricci curvature of the internet topology.
\newblock In \emph{2015 IEEE Conference on Computer Communications (INFOCOM)},
  pages 2758--2766. IEEE, 2015.
\newblock \doi{10.1109/infocom.2015.7218668}.

\bibitem[Ollivier(2007)]{ollivier2007ricci}
Y.~Ollivier.
\newblock Ricci curvature of metric spaces.
\newblock \emph{Comptes Rendus Mathematique}, 345\penalty0 (11):\penalty0
  643--646, 2007.
\newblock \doi{10.1016/j.crma.2007.10.041}.

\bibitem[Ollivier(2009)]{ollivier2009ricci}
Y.~Ollivier.
\newblock Ricci curvature of {M}arkov chains on metric spaces.
\newblock \emph{Journal of Functional Analysis}, 256\penalty0 (3):\penalty0
  810--864, feb 2009.
\newblock \doi{10.1016/j.jfa.2008.11.001}.

\bibitem[Ollivier(2010)]{ollivier2010survey}
Y.~Ollivier.
\newblock A survey of {R}icci curvature for metric spaces and {M}arkov chains.
\newblock In \emph{Probabilistic approach to geometry}, pages 343--381.
  Mathematical Society of Japan, 2010.
\newblock \doi{10.2969/aspm/05710343}.

\bibitem[O'neill(1983)]{oneill1983semiriemannian}
B.~O'neill.
\newblock \emph{Semi-{R}iemannian geometry with applications to relativity},
  volume 103 of \emph{Pure and Applied Mathematics}.
\newblock Academic press, 1983.

\bibitem[Paeng(2012)]{paeng2012volume}
S.-H. Paeng.
\newblock Volume and diameter of a graph and {O}llivier’s {R}icci curvature.
\newblock \emph{European Journal of Combinatorics}, 33\penalty0 (8):\penalty0
  1808--1819, 2012.
\newblock \doi{10.1016/j.ejc.2012.03.029}.

\bibitem[Penrose(2003)]{penrose2003random}
M.~Penrose.
\newblock \emph{Random geometric graphs}.
\newblock Oxford university press, 2003.
\newblock \doi{10.1093/acprof:oso/9780198506263.001.0001}.

\bibitem[Sandhu et~al.(2015)Sandhu, Georgiou, Reznik, Zhu, Kolesov,
  Senbabaoglu, and Tannenbaum]{sandhu2015graph}
R.~S. Sandhu, T.~Georgiou, E.~Reznik, L.~Zhu, I.~Kolesov, Y.~Senbabaoglu, and
  A.~Tannenbaum.
\newblock Graph curvature for differentiating cancer networks.
\newblock \emph{Scientific reports}, 5:\penalty0 12323, 2015.
\newblock \doi{10.1038/srep12323}.

\bibitem[Sandhu et~al.(2016)Sandhu, Georgiou, and Tannenbaum]{sandhu2016ricci}
R.~S. Sandhu, T.~T. Georgiou, and A.~R. Tannenbaum.
\newblock Ricci curvature: {A}n economic indicator for market fragility and
  systemic risk.
\newblock \emph{Science advances}, 2\penalty0 (5):\penalty0 e1501495, 2016.
\newblock \doi{10.1126/sciadv.1501495}.

\bibitem[Shor and Yukich(1991)]{shor1991minimax}
P.~W. Shor and J.~E. Yukich.
\newblock Minimax grid matching and empirical measures.
\newblock \emph{The Annals of Probability}, 19\penalty0 (3):\penalty0
  1338--1348, 1991.
\newblock \doi{10.1214/aop/1176990347}.

\bibitem[Sia et~al.(2019)Sia, Jonckheere, and Bogdan]{sia2019ollivier}
J.~Sia, E.~Jonckheere, and P.~Bogdan.
\newblock Ollivier-{R}icci curvature-based method to community detection in
  complex networks.
\newblock \emph{Scientific reports}, 9\penalty0 (1):\penalty0 9800, 2019.
\newblock \doi{10.1038/s41598-019-46079-x}.

\bibitem[Sreejith et~al.(2016)Sreejith, Mohanraj, Jost, Saucan, and
  Samal]{sreejith2016forman}
R.~P. Sreejith, K.~Mohanraj, J.~Jost, E.~Saucan, and A.~Samal.
\newblock Forman curvature for complex networks.
\newblock \emph{Journal of Statistical Mechanics: Theory and Experiment},
  2016\penalty0 (6):\penalty0 063206, 2016.
\newblock \doi{10.1088/1742-5468/2016/06/063206}.

\bibitem[Talagrand(1992)]{talagrand1992matching}
M.~Talagrand.
\newblock Matching random samples in many dimensions.
\newblock \emph{The Annals of Applied Probability}, pages 846--856, 1992.
\newblock \doi{10.1214/aoap/1177005578}.

\bibitem[Talagrand(1994)]{talagrand1994matching}
M.~Talagrand.
\newblock Matching theorems and empirical discrepancy computations using
  majorizing measures.
\newblock \emph{Journal of the American Mathematical Society}, 7\penalty0
  (2):\penalty0 455--537, 1994.
\newblock \doi{10.2307/2152764}.

\bibitem[Trugenberger(2017)]{trugenberger2017combinatorial}
C.~A. Trugenberger.
\newblock {Combinatorial quantum gravity: geometry from random bits}.
\newblock \emph{Journal of High Energy Physics}, 2017\penalty0 (9):\penalty0
  45, 2017.
\newblock \doi{10.1007/JHEP09(2017)045}.

\bibitem[Xu et~al.(2005)Xu, Xu, and Sun]{xu2005convergence}
Z.~Xu, G.~Xu, and J.-G. Sun.
\newblock Convergence analysis of discrete differential geometry operators over
  surfaces.
\newblock In \emph{Mathematics of Surfaces XI}, pages 448--457. Springer, 2005.
\newblock \doi{10.1007/11537908_27}.

\end{thebibliography}

\end{document}